\title{The Structure of the Grothendieck Rings of Wreath Product Deligne Categories and their Generalisations}
\author{Christopher Ryba}
\address{Department of Mathematics, Massachusetts Institute of Technology, Cambridge, MA 02139, USA}
\email{ryba@mit.edu}
\date{\today}
\begin{document}
\maketitle

\newtheorem{theorem}{Theorem}[section]
\newtheorem{lemma}[theorem]{Lemma}
\newtheorem{proposition}[theorem]{Proposition}
\newtheorem{corollary}[theorem]{Corollary}
\newtheorem{definition}[theorem]{Definition}
\newtheorem{remark}[theorem]{Remark}
\newtheorem{example}[theorem]{Example}

\newcommand\numberthis{\addtocounter{equation}{1}\tag{\theequation}}

\newcommand{\Res}[2] {
  \textnormal{Res}_{#1}^{#2} 
}

\newcommand{\Ind}[2] {
  \textnormal{Ind}_{#1}^{#2} 
}

\newcommand{\Coind}[2] {
  \textnormal{Coind}_{#1}^{#2} 
}

\newcommand{\Wreath}[1] {
  \mathcal{W}_{#1}(\mathcal{C})
}

\newcommand{\Groth}[0] {
  \mathcal{G}(\mathcal{C})
}

\newcommand{\End}[0] {
  \textnormal{End}
}

\newcommand{\Mat}[0] {
  \textnormal{Mat}
}

\newcommand{\Id}[0] {
  \textnormal{Id}
}

\newcommand{\hopf}[0] {
  \mathcal{R}
}

\begin{abstract}
Given a tensor category $\mathcal{C}$ over an algebraically closed field of characteristic zero, we may form the wreath product category $\mathcal{W}_n(\mathcal{C})$. It was shown in \cite{Ryba} that the Grothendieck rings of these wreath product categories stabilise in some sense as $n \to \infty$. The resulting ``limit'' ring, $\mathcal{G}_\infty^{\mathbb{Z}}(\mathcal{C})$, is isomorphic to the Grothendieck ring of the wreath product Deligne category $S_t(\mathcal{C})$ as defined by \cite{Mori}. This ring only depends on the Grothendieck ring $\mathcal{G}(\mathcal{C})$. Given a ring $R$ which is free as a $\mathbb{Z}$-module, we construct a ring $\mathcal{G}_\infty^{\mathbb{Z}}(R)$ which specialises to $\mathcal{G}_\infty^{\mathbb{Z}}(\mathcal{C})$ when $R = \mathcal{G}(\mathcal{C})$. We give a description of $\mathcal{G}_\infty^{\mathbb{Z}}(R)$ using generators very similar to the basic hooks of \cite{Nate}. We also show that $\mathcal{G}_\infty^{\mathbb{Z}}(R)$ is a $\lambda$-ring wherever $R$ is, and that $\mathcal{G}_\infty^{\mathbb{Z}}(R)$ is (unconditionally) a Hopf algebra. Finally we show that $\mathcal{G}_\infty^{\mathbb{Z}}(R)$ is isomorphic to the Hopf algebra of distributions on the formal neighbourhood of the identity in $(W\otimes_{\mathbb{Z}} R)^\times$, where $W$ is the ring of Big Witt Vectors.

\end{abstract}

\tableofcontents

\section{Introduction}
\noindent
Let $R$ be a ring which is free as a $\mathbb{Z}$-module. We define a ring $\mathcal{G}_\infty^{\mathbb{Z}}(R)$ which is an integral form for an infinite tensor product of $U(\mathbb{Q} \otimes_{\mathbb{Z}} R)$ (the universal enveloping algebra of $\mathbb{Q} \otimes_\mathbb{Z} R$, viewed as a Lie algebra). Given a tensor category (or even a ring category in the sense of Definition 4.2.3 of \cite{EGNO}) over an algebraically closed field of characteristic zero, we may form $S_t(\mathcal{C})$, the wreath product Deligne category (as in \cite{Mori}). Our construction recovers the Grothendieck ring $\mathcal{G}(S_t(\mathcal{C}))$ when $R$ is the Grothendieck ring $\mathcal{G}(\mathcal{C})$.
\newline \newline \noindent
It was shown in \cite{Nate} that $\mathcal{G}(S_t(\mathcal{C}))$ is a filtered ring with associated graded ring isomorphic to a free polynomial algebra in certain elements called ``basic hooks'', indexed by $\mathbb{Z}_{>0} \times I(\mathcal{C})$, where $I(\mathcal{C})$ is the set of isomorphism classes of simple objects in $\mathcal{C}$. We show an analogous result for our case, indexed by $\mathbb{Z}_{>0} \times I$, where $I$ is a $\mathbb{Z}$-basis of $R$. If $U \in R$ we define the elements $e_n(U)$ by giving an expression for the generating function of the $e_n(U)$ (see Definition \ref{E_function}):
\[
E_U(t) = \sum_{n \geq 0} e_n(U)t^n
\]
where $e_0(U) = 1$. This allows us to state one family of relations between the elements $e_n(U)$; for $U, V \in R$,
\[
E_U(u)E_{VU}(-uv)^{-1}E_V(v) = E_V(v)E_{UV}(-uv)^{-1}E_U(u).
\]
We also discuss how to express $e_n(V)$ in terms of $e_n(U)$ where $U \in I$ (where $I$ is any fixed basis of $R$). For this we define an auxiliary generating function (with rational coefficients):
\[
F_U(t) = -\sum_{r \geq 1} \frac{\mu(r)}{r} \log(E_{U^r}(-t^r)),
\]
We then have the following relations (for any $U, V \in R$):
\[
F_{U+V}(t) = F_U(t) + F_V(t)
\]
This equation of generating functions may be rewritten to have integral coefficients (although we do not do this explicitly), leading to a description of $\mathcal{G}_\infty^{\mathbb{Z}}(R)$.
\newline \newline \noindent
We show that $\mathcal{G}_\infty^{\mathbb{Z}}(R)$ has a $\lambda$-ring structure whenever $\mathcal{G}(R)$ does. In the case where $R$ is the Grothendieck ring of a symmetric tensor category, so that $R$ inherits a $\lambda$-ring structure, the $\lambda$-ring structure on $\mathcal{G}_\infty^{\mathbb{Z}}(R) = \mathcal{G}(S_t(\mathcal{C}))$ is the same as the one induced by the symmetric tensor structure on $S_t(\mathcal{C})$.
\newline \newline \noindent
This algebra $\mathcal{G}_\infty^{\mathbb{Z}}(R)$ has a Hopf algebra structure (when $R = \mathcal{G}(\mathcal{C})$, the comultiplication is induced by certain functors between Deligne categories). The comultiplication $\Delta$, counit $\varepsilon$, and antipode $S$, are defined as follows:
\begin{eqnarray*}
\Delta(E_U(t)) &=& E_U(t) \otimes E_U(t) \\
\varepsilon(E_U(t)) &=& 1 \\
S(E_U(t)) &=& E_U(t)^{-1}
\end{eqnarray*}
In particular, the generating function $E_U(t)$ is grouplike.
\newline \newline \noindent
Finally, we prove that $\mathcal{G}_\infty^{\mathbb{Z}}(R)$ is isomorphic to the Hopf algebra of distributions on the formal neighbourhood the identity of $(W \otimes_{\mathbb{Z}} R)^\times$, where $W$ is the ring of Big Witt Vectors.
\newline \newline \noindent
The structure of the paper is as follows. In Section 2 we introduce the main algebraic tools relevant to us; the ring of symmetric functions, as well as some discussion of wreath products of tensor categories. The structure of $\mathbb{Q} \otimes_\mathbb{Z} \mathcal{G}(S_t(\mathcal{C}))$ was established in \cite{Ryba} and is discussed in Section 3. We define the main object of the paper, $\mathcal{G}_\infty^{\mathbb{Z}}(R)$ in Section 4, along with the generators $e_n(U)$. In Sections 5 and 6, we prove certain relations satisfied by $e_n(U)$. Section 7 shows how a $\lambda$-ring structure on $R$ passes to a $\lambda$-ring structure on $\mathcal{G}_\infty^{\mathbb{Z}}(R)$, while Section 8 constructs a Hopf algebra structure on $\mathcal{G}_\infty^{\mathbb{Z}}(R)$ (note that $R$ is not required to be a Hopf algebra for this). Finally in Section 9 it is shown that, as a Hopf algebra, $\mathcal{G}_\infty^{\mathbb{Z}}(R)$ admits a description as a certain algebra of distributions.

\subsection{Acknowledgements}
The author would like to thank Pavel Etingof and Hood Chatham for useful conversations.

\section{Background}
\subsection{Partitions and Symmetric Functions}
Throughout this paper we will make use of symmetric function combinatorics. We now introduce the notation we will use; we refer to \cite{Macdonald} for background on the topic.
\newline \newline \noindent
Recall that a partition $\lambda$ of $n$ is a sequence of nonnegative integers, $\lambda = (\lambda_1, \lambda_2, \ldots, \lambda_l)$ such that $\lambda_1 \geq \lambda_2 \geq \cdots \geq \lambda_l$, and $\lambda_1 + \lambda_2 + \cdots + \lambda_l = n$; we say $n$ is the size of $\lambda$, denoted $|\lambda|$. The $\lambda_i$ are called the parts of $\lambda$. We consider two partitions to be the same if one is obtained from the other by appending zeros to the end. An alternative way of specifying a partition $\lambda$ is to give the numbers $m_i = |\{ r \mid \lambda_r = i\}|$ which count the number of parts of $\lambda$ of size $i$; in this case we write $\lambda = (1^{m_1} 2^{m_2} \cdots )$ (thus $|\lambda| = \lambda_1 + \lambda_2 + \cdots = 1m_1 + 2m_2 + 3m_3 + \cdots$). When it is unclear which partition we are considering, we write $m_i(\lambda)$ for the number of parts of size $i$ in the partition $\lambda$. The length of $\lambda$, denoted $l(\lambda)$, is the number of nonzero parts of $\lambda$; $l(\lambda) = m_1 + m_2 + \cdots$. We will make use of the quantities $z_\lambda = \prod_{i} m_i! i^{m_i}$ and $\varepsilon_\lambda = (-1)^{\sum_i (i-1)m_i}$.
\newline \newline \noindent
We write $\lambda \vdash n$ to mean that $\lambda$ is a partition of $n$, and $\mathcal{P} = \{\lambda \mid \lambda \vdash n, n \in \mathbb{N}_{\geq 0} \}$ is the set of all partitions.

\begin{definition} \label{gen_part_one}
Let $\lambda = (\lambda_1, \lambda_2, \ldots, \lambda_l)$ be a partition, and assume $n$ is a natural number such that $n \geq |\lambda| + \lambda_1$. We write $\lambda[n]$ for the partition $(n-|\lambda|, \lambda_1, \lambda_2, \ldots, \lambda_l)$.
\end{definition}
\noindent
Note that the inequality on $n$ guarantees that this sequence is weakly decreasing.
\noindent
Thus $\lambda[n]$ is the partition of $n$ obtained by adding a first part to $\lambda$ (or top row when the partition is depicted as a Young diagram in English notation) of the appropriate size.
\newline \newline \noindent
The main algebraic tool we use is the ring of symmetric functions, denoted $\Lambda$. It is defined via a graded inverse limit of rings. Consider the rings of invariants $R_n = \mathbb{Z}[x_1,x_2,\cdots,x_n]^{S_n}$, with the symmetric group $S_n$ acting by permutation of variables. There are homomorphisms $R_{n+1} \to R_n$ defined by setting $x_{n+1} = 0$, and the graded inverse limit defined by these is $\Lambda$. We refer the reader to \cite{Macdonald} for details.
\newline \newline \noindent
As a commutative $\mathbb{Z}$-algebra, $\Lambda$ is freely generated by the elementary symmetric functions $e_i$, as well as the complete symmetric functions $h_i$. In particular $\Lambda = \mathbb{Z}[e_1, e_2, \ldots] = \mathbb{Z}[h_1, h_2, \ldots]$ (by convention $h_0=e_0=1$). There is an automorphism $\omega$ of $\Lambda$ defined by $\omega(e_i) = h_i$. It turns out that $\omega$ is an involution: $\omega(h_i) = e_i$.
\newline \newline \noindent
Another important family of elements of $\Lambda$ are the power-sum symmetric functions $p_n$ which do not generate $\Lambda$ over $\mathbb{Z}$, however, $\mathbb{Q} \otimes_\mathbb{Z} \Lambda = \mathbb{Q}[p_1, p_2, \ldots]$. The relations between the $e_i$, $h_i$, and $p_i$ are encapsulated in relations between the following formal power series: let $E(t) = \sum_{n=0}^{\infty} e_n t^n$, $H(t) = \sum_{n=0}^{\infty} h_n t^n$ (here $e_0=h_0=1$), and $P(t) = \sum_{n=0}^{\infty} p_{n+1}t^n$ be the generating functions of the three families of symmetric functions we have introduced. Then $H(t)E(-t) = 1$, and $\frac{E^{\prime}(t)}{E(t)} = P(-t)$.
\newline \newline \noindent
Given a partition $\lambda = (\lambda_1, \lambda_2, \ldots, \lambda_l)$, let $e_\lambda = e_{\lambda_1} e_{\lambda_2} \cdots e_{\lambda_l}$, and also let $h_\lambda$ and $p_\lambda$ be defined analogously. The $e_\lambda$ and $h_\lambda$ (for $\lambda \in \mathcal{P}$) each form bases of $\Lambda$, whilst the $p_\lambda$ form a basis of $\mathbb{Q} \otimes \Lambda$. Finally, we let $s_\lambda$ denote Schur functions (indexed by $\lambda \in \mathcal{P}$); Schur functions are a $\mathbb{Z}$-basis of $\Lambda$.
\newline \newline
\noindent
The Schur functions are related to the other bases via the representation theory of the symmetric groups. The simple $\mathbb{Q}S_n$-modules are indexed by partitions $\lambda \vdash n$. They are called Specht modules and denoted $\mathcal{S}^{\lambda}$. Note that the cycle type of a permutation describes its conjugacy class in $S_n$, and these are also parametrised by partitions of $n$; let $\chi_{\mu}^{\lambda}$ denote the value of the character of $\mathcal{S}^\lambda$ on an element of cycle type $\mu$. It is an important fact that these describe the change-of-basis matrix between the $s_\lambda$ and the $p_\mu$:
\[
s_\lambda = \sum_{\mu \vdash |\lambda|} \frac{\chi_{\mu}^{\lambda}p_\mu}{z_\mu}, \hspace{10mm} p_\mu = \sum_{\lambda \vdash |\mu|} \chi_{\mu}^{\lambda} s_\lambda.
\]
We have two special cases coming from the equations $h_n = s_{(n)}$ and $e_n = s_{(1^n)}$ (where $\chi_{\mu}^{(n)} = 1$ and $\chi_{\mu}^{(1^n)} = \varepsilon_\mu$). Namely, we have:
\[
h_n = \sum_{\lambda \vdash n} \frac{p_\lambda}{z_\lambda}, \hspace{20mm}
e_n = \sum_{\lambda \vdash n} \frac{\varepsilon_\lambda p_\lambda}{z_\lambda}.
\]
There is a bilinear form $\langle -, - \rangle$ on $\Lambda$ for which the Schur functions are orthonormal. It satisfies $\langle p_\lambda, p_\mu \rangle = \delta_{\lambda , \mu} z_\lambda$, where $\delta_{\lambda, \mu}$ is the Kronecker delta. 
\newline \newline \noindent
An alternative description of $\Lambda \otimes \Lambda$ is given by functions that are symmetric in two sets of variables separately. We write $f(\mathbf{x})$ to indicate that $f$ is a symmetric function in the variables $\{ x_i\}$ (suppressing the index set of the variables), or $f(\mathbf{x},\mathbf{y})$ to mean that $f$ is a symmetric function in the variables $\{ x_i\} \cup \{ y_j\}$. We also write $f(\mathbf{x}\mathbf{y})$ when the variables are $\{ x_i y_j \}$ (for example, $p_n(\mathbf{x}, \mathbf{y}) = p_n(\mathbf{x}) + p_n(\mathbf{y})$ and $p_n(\mathbf{x}\mathbf{y}) = p_n(\mathbf{x})p_n(\mathbf{y})$). We have the Cauchy identity:
\[
\prod_{i,j} \frac{1}{1-x_i y_j} = \sum_{\lambda \in \mathcal{P}} s_\lambda(\mathbf{x})s_\lambda(\mathbf{y})
= \sum_{\mu \in \mathcal{P}} \frac{p_\mu(\mathbf{x}) p_\mu(\mathbf{y})}{z_\mu} = \exp \left( \sum_{l \geq 1} \frac{p_l(\mathbf{x})p_l(\mathbf{y})}{l}\right).
\]
We also have that $s_\lambda(\mathbf{x}\mathbf{y}) = \sum_{\mu, \nu \in \mathcal{P}} k_{\mu,\nu}^{\lambda}s_\mu(\mathbf{x}) s_\nu(\mathbf{y})$, where $k_{\mu, \nu}^{\lambda}$ are the Kronecker coefficients, defined when $|\lambda| = |\mu| = |\nu|$ as tensor product multiplicities for Specht modules (we take $k_{\mu, \nu}^\lambda = 0$ if $\mu, \nu, \lambda$ are not all of the same size):
\[
\mathcal{S}^{\mu} \otimes \mathcal{S}^\nu = \bigoplus_{\lambda} \left( \mathcal{S}^\lambda \right)^{\oplus k_{\mu,\nu}^{\lambda}}.
\]
We also have $s_{\lambda}(\mathbf{x},\mathbf{y}) = \sum_{\mu, \nu} c_{\mu, \nu}^\lambda s_\mu(\mathbf{x}) s_\nu(\mathbf{y})$, where $c_{\mu, \nu}^{\lambda}$ are the Littlewood-Richardson coefficients. These coefficients also satisfy $s_\mu s_\nu = \sum_{\lambda} c_{\mu, \nu}^{\lambda}s_\lambda$. We take the Littlewood-Richardson coefficient $c_{\mu, \nu}^{\lambda}$ to be zero if $|\mu| + |\nu| \neq |\lambda|$. We will consider symmetric functions with many variable sets, some of which may be repeated. To indicate that a certain variable set occurs multiple times, we write $f(\mathbf{x}, \mathbf{x}, \ldots, \mathbf{x}) = f(\mathbf{x}^{\oplus n})$ (where $n$ is the number of occurrences of $\mathbf{x}$ in the left hand side). More generally, if we have a family of variable sets $\mathbf{x}^{(i)}$, then we write $f(\mathbf{x}^{(i_1)}, \mathbf{x}^{(i_2)}, \ldots) = f(\bigoplus_i \mathbf{x}^{(i)})$.
\newline \newline \noindent
There is a Hopf algebra structure on $\Lambda$. The comultiplication, which we denote $\Delta^{(+)}: \Lambda \to \Lambda \otimes \Lambda$, is given by $\Delta^{(+)}(f) = f(\mathbf{x}, \mathbf{y})$ (here we consider $\Lambda \otimes \Lambda$ as symmetric functions in two sets of variables). This makes the power-sum symmetric functions primitive. When we consider the Big Witt Vectors, we will need the Kronecker comultiplication; $\Delta^{(\times )}(f) = f(\mathbf{xy})$ (it is part of a Hopf algebra structure with a different multiplication which we do not consider in this paper).
\newline \newline \noindent
We will consider a tensor product of copies of $\Lambda$ indexed by a set $I$:
\[
\bigotimes_{U \in I} \Lambda^{(U)}
\]
Similarly to the case of $\Lambda \otimes \Lambda$, this is the space of symmetric functions in many sets of variables, which we may denote $\mathbf{x}_U$ for $U \in I$. To indicate that a symmetric function $f$ belongs to $\Lambda^{(U)}$ (or its canonical inclusion into $\bigotimes_{U \in I} \Lambda^{(U)}$), we write either $f(\mathbf{x}_U)$, or $f^{(U)}$ when we are not concerned with the variable sets themselves.

\subsection{Tensor Categories and Wreath Products}
The main cases of interest and motivating examples for our results will involve tensor categories. Let $k$ be an algebraically closed field of characteristic zero and $\mathcal{C}$ an artinian tensor category over $k$. All our results will hold when $\mathcal{C}$ is an artinian ring category over $k$ in the sense of Section 4.2 of \cite{EGNO}. However, a reader who is not interested in this degree of generality may take $\mathcal{C}$ to be the category of finite-dimensional modules over a Hopf algebra $\mathcal{R}$ over $k$. We write $I(\mathcal{C})$ for the set of isomorphism classes of simple objects of $\mathcal{C}$, and $\mathbf{1}$ for the unit object. Throughout, $\mathcal{G}(\mathcal{D})$ will indicate the Grothendieck group or ring of a category $\mathcal{D}$.
\newline \newline \noindent
Wreath products underlie the main object of this paper. In the case where $\mathcal{C}$ is $\mathcal{R}-mod$, we may consider the wreath product $\mathcal{R} \wr S_n$; as a vector space, this is $\mathcal{R}^{\otimes n} \otimes kS_n$. The multiplication on $\mathcal{R} \wr S_n$ is determined by requiring the maps $\varphi: \mathcal{R}^{\otimes n} \to \mathcal{R}^{\otimes n} \otimes kS_n: x \mapsto x \otimes 1_{S_n}$ and $\psi: kS_n \to \mathcal{R}^{\otimes n} \otimes kS_n: y \mapsto 1_{\mathcal{R}} \otimes y$ to be algebra homomorphisms, together with the following commutation relation between the Hopf algebra and symmetric group parts. If $r_1, r_2, \ldots, r_n \in \mathcal{R}$, and $\sigma \in S_n$,
\[
\varphi(r_1 \otimes r_2 \otimes \cdots \otimes r_n) \psi(\sigma) = \psi(\sigma) \varphi(r_{\sigma(1)} \otimes r_{\sigma(2)} \otimes \cdots \otimes r_{\sigma(n)}).
\]
This is again a Hopf algebra, with the comultiplication and antipode determined by requiring $\varphi$ and $\psi$ to be homomorphisms of Hopf algebras. In the case of more general $\mathcal{C}$, we have the following definition. Write $\boxtimes$ for the Deligne product of categories (see Section 1.11 of \cite{EGNO}). We may take the $n$-fold iterated product of $\mathcal{C}$ with itself, $\mathcal{C}^{\boxtimes n}$. This comes with an action of the symmetric group $S_n$ by permutation of the factors. The equivariantisation with respect to this action is the wreath product category we are concerned with; we write $\mathcal{W}_n(\mathcal{C}) = (\mathcal{C}^{\boxtimes n})^{S_n}$.
\newline \newline \noindent
The simple objects of the wreath product category $\mathcal{W}_n(\mathcal{C})$ are indexed by the following set:
\begin{definition}
Let $\mathcal{P}_n^{\mathcal{C}}$ denote partition-valued functions on $I(\mathcal{C})$ with total size $n$:
\[
\mathcal{P}_n^{\mathcal{C}} = \{\vv{\lambda}: I(\mathcal{C}) \to \mathcal{P} \mid \sum_{U \in I(\mathcal{C})} |\vv{\lambda}(U)| = n \}
\]
\end{definition}
\noindent
We use the vector notation $\vv{\lambda}$ to denote a partition-valued function on $I(\mathcal{C})$, and write $|\vv{\lambda}| = \sum_{U \in I(\mathcal{C})} |\vv{\lambda}(U)|$ for the total size of $\vv{\lambda}$. We will also write $S_{\vv{\lambda}}$ for the Young subgroup $\prod_{U \in I(\mathcal{C}} S_{|\vv{\lambda}(U)|}$ of $S_{|\vv{\lambda}|}$. We now explain how to construct the simple object corresponding to $\vv{\lambda}$.
\newline \newline \noindent
The object corresponding to a multipartition $\vv{\lambda}$ can be explicitly constructed using induction functors as described in \cite{Mori}; we briefly summarise this theory. Suppose that a group $G$ acts on the category $\mathcal{C}$. Then any subgroup of $G$ also acts on $\mathcal{C}$, via restriction. As in Section 3.2 of \cite{Mori}, if $\mathcal{D}$ is an additive category, then given any finite-index subgroup $H$ of $G$, we have a forgetful functor $\Res{H}{G}: \mathcal{D}^G \to \mathcal{D}^H$ (the superscript group indicates the equivariantisation of $\mathcal{D}$ with respect to that group). Further, there is an induction functor $\Ind{H}{G}: \mathcal{D}^H \to \mathcal{D}^G$ which is two-sided adjoint to $\Res{H}{G}$. The induction functor may be written as a sum over coset representatives of $H$ in $G$ as follows:
\[
\Ind{H}{G}(M) = \bigoplus_{g \in G/H} gM
\]
The action of $G$ is completely analogous to the case of induction of representations of finite groups, where $M$ would be a representation of $H$.
\newline \newline \noindent
Equipped with this notion, we consider $U^{\boxtimes |\vv{\lambda}(U)|} \otimes \mathcal{S}^{\vv{\lambda}(U)}$; this defines an object of $\mathcal{C}^{\boxtimes |\vv{\lambda}(U)|}$. Then, $S_{|\vv{\lambda}(U)|}$ acts by permuting the tensor factors of $U^{\boxtimes |\vv{\lambda}(U)|}$, whilst also acting on $\mathcal{S}^{\vv{\lambda}(U)}$. Hence, we have an object of $\mathcal{W}_{|\vv{\lambda}(U)|}(\mathcal{C})$. We may take the (Deligne) external product of these objects (for $U \in I(\mathcal{C})$) to get:
\[
\boxtimes_{U \in I(\mathcal{C})} \left(U^{\boxtimes |\vv{\lambda}(U)|} \otimes \mathcal{S}^{\vv{\lambda}(U)}\right)
\]
which is an object of the category
\[
\boxtimes_{U \in I(\mathcal{C})} \left( \mathcal{W}_{|\vv{\lambda}(U)|}(\mathcal{C}) \right) \cong (\mathcal{C}^{\boxtimes |\vv{\lambda}|})^{S_{\vv{\lambda}}}.
\]
Upon applying the induction functor $\Ind{S_{\vv{\lambda}}}{S_{|\vv{\lambda}|}}$, we obtain the simple object corresponding to the multipartition $\vv{\lambda}$. (In Section 5 of \cite{Mori} an analogous statement is proved for indecomposable objects when $\mathcal{C}$ is an additive category, but our case is not materially different.)
\newline \newline \noindent
Since the simple objects are induced from Young subgroups of symmetric groups, the tensor structure of the categories $\mathcal{W}_n(\mathcal{C})$ can be studied using Mackey theory. In \cite{Ryba} this approach was used to demonstrate certain stability properties. To state them, we need the following definition (analogous to Definition \ref{gen_part_one}).

\begin{definition}
If $\vv{\lambda} \in \mathcal{P}_m^{\mathcal{C}}$, for $n \geq m + \vv{\lambda}(\mathbf{1})_1$, we define $\vv{\lambda}[n] \in \mathcal{P}_n^{\mathcal{C}}$ to be multpartition constructed in the following way. For $U \in I(\mathcal{C})$ different from $\mathbf{1}$, $\vv{\lambda}[n](U) = \vv{\lambda}(U)$ whilst $\vv{\lambda}[n](\mathbf{1}) = \vv{\lambda}(\mathbf{1})[n]$.
\end{definition}
\noindent
Write $R_{\vv{\lambda}}$ for the simple object of $\mathcal{W}_n(\mathcal{C})$ indexed by the multipartition $\vv{\lambda} \in \mathcal{P}_n^{\mathcal{C}}$. In the Grothendieck ring $\mathcal{G}(\mathcal{W}_n(\mathcal{C}))$, consider the multiplication induced by the tensor product. Here, $\vv{\lambda}, \vv{\mu}, \vv{\nu}$ are fixed multipartitions of any sizes, square brackets indicate taking the image in the Grothendieck group, and we take $n$ sufficiently large:
\[
[R_{\vv{\mu}[n]}][R_{\vv{\nu}[n]}] = [R_{\vv{\mu}[n]} \otimes R_{\vv{\nu}[n]}] = \sum_{\vv{\lambda}} c_{\vv{\mu}, \vv{\nu}}^{\vv{\lambda}}(n) [R_{\vv{\lambda}[n]}]
\]
The sum is taken only over those $\vv{\lambda}$ such that $\vv{\lambda}[n]$ is defined. This equation serves to define the structure constants $c_{\vv{\mu}, \vv{\nu}}^{\vv{\lambda}}(n)$ for $n$ sufficiently large. Theorem 7.10 of \cite{Ryba} states the following: $\lim_{n \to \infty} c_{\vv{\mu}, \vv{\nu}}^{\vv{\lambda}}(n)$ exists, and is finite. Further, for fixed $\vv{\mu}, \vv{\nu}$, it is nonzero for only finitely many $\vv{\lambda}$. With some further work (Theorems 7.14 and 7.15 of \cite{Ryba}), it follows that we may define an associative algebra $\mathcal{G}_\infty^{\mathbb{Z}}(\mathcal{C})$ which is in some sense ``the $n \to \infty$ limit of the Grothendieck ring of $\mathcal{W}_n(\mathcal{C})$''. 
\begin{definition}
Let $\mathcal{G}_\infty^{\mathbb{Z}}(\mathcal{C})$ have $\mathbb{Z}$-basis $\{X_{\vv{\lambda}}\}$ indexed by all multipartitions of finite size $\{\vv{\lambda} \mid \vv{\lambda} \in \mathcal{P}_n^{\mathcal{C}}, n \in \mathbb{N} \}$, and multiplication given by:
\[
X_{\vv{\mu}} X_{\vv{\nu}} = \sum_{\vv{\lambda}} \left( \lim_{n \to \infty} c_{\vv{\mu}, \vv{\nu}}^{\vv{\lambda}}(n) \right) X_{\vv{\lambda}}
\]
We write $\mathcal{G}_\infty^{\mathbb{Q}}(\mathcal{C}) = \mathbb{Q} \otimes_{\mathbb{Z}} \mathcal{G}_\infty^{\mathbb{Z}}(\mathcal{C})$, when we take rational coefficients.
\end{definition}
\noindent
By Theorem 7.14 of \cite{Ryba} it follows that $\mathcal{G}_\infty^{\mathbb{Z}}(\mathcal{C})$ is the Grothendieck ring of the wreath-product Deligne category $S_t(\mathcal{C})$ as introduced by \cite{Mori}, with $X_{\vv{\lambda}}$ corresponding to the simple objects (for generic $t$) of $S_t(\mathcal{C})$. It was proved in \cite{Nate} that the Grothendieck ring of $S_t(\mathcal{C})$ has a filtration (``$|\lambda|$-filtration''), and a generating set was given (called ``basic hooks''). In \cite{Ryba}, the algebraic structure of $\mathcal{G}_\infty^{\mathbb{Q}}(\mathcal{C})$ was established. Theorem 8.8 of \cite{Ryba} asserts that $\mathcal{G}_\infty^{\mathbb{Q}}(\mathcal{C})$ is isomorphic to an infinite tensor product of universal enveloping algebras of the Lie algebra obtained by taking the Grothendieck ring of $\mathcal{C}$ with rational coefficients. That is:
\[
\mathcal{G}_\infty^{\mathbb{Q}}(\mathcal{C}) = \bigotimes_{i=1}^{\infty} U(\mathbb{Q} \otimes_\mathbb{Z} \mathcal{G}(\mathcal{C})_i)
\]
Here, the subscript $i$ in $\mathcal{G}(\mathcal{C})_i$ means that the elements of this copy of $\mathcal{G}(\mathcal{C})$ lie in filtration degree $i$. The infinite tensor product is spanned by pure tensors $a_1 \otimes a_2 \otimes \cdots$ such that all but finitely many $a_i = 1$.
\newline \newline
\noindent
The purpose of this paper is to describe the integral version of the algebra, $\mathcal{G}_\infty^{\mathbb{Z}}(\mathcal{C})$, and to elucidate some other properties, such as a $\lambda$-ring structure (when there is a $\lambda$-ring structure on the Grothendieck ring $\mathcal{G}(\mathcal{C})$), a Hopf-algebra structure, and a realisation as a certain algebra of distributions.


\section{Structure of the Rational Limiting Grothendieck Ring of Wreath Product Categories}
\noindent
We now discuss the details of the structure of the rational limiting Grothendieck ring $\mathcal{G}_\infty^{\mathbb{Q}}(\mathcal{C})$. The main tool is the family of elements $T_n(U)$ (constructed in Section 8 of \cite{Ryba}), where $n \in \mathbb{Z}_{\geq 0}$ and $U$ can be any element of the Grothendieck ring $\mathcal{G}(\mathcal{C})$. The elements $T_n(U)$ are first defined for the case where $U$ is the image of an object of $\mathcal{C}$ in the Grothendieck ring. It is then shown that if $N$ is a subobject of $M$ (so that $0 \to N \to M \to M/N \to 0$ is a short exact sequence), then $T_n(M) = T_n(N) + T_n(M/N)$. This means that $T_n(U)$ is linear in $U$ for $U$ that are the images of objects of $\mathcal{C}$ in the Grothendieck ring. It also means that $T_n(U)$ only depends on the composition factors of $U$ (with multiplicity), so we will not worry about distinguishing $U$ from its image in the Grothendieck group. This linearity property justifies defining $T_n(U)$ for arbitrary $U \in \mathcal{G}(\mathcal{C})$ by writing $U = U_1 - U_2$ where $U_1$ and $U_2$ are the images of objects of $\mathcal{C}$, and letting $T_n(U) = T_n(U_1) - T_n(U_2)$. It is then shown that 
\[
T_n(U)T_n(V) - T_n(V)T_n(U) = T_n(UV) -T_n(VU),
\]
where the multiplication of $U$ and $V$ takes place in $\mathcal{G}(\mathcal{C})$. Also, $T_m(U)$ and $T_n(V)$ commute when $m \neq n$. Although the $T_n(U)$ are not elements of $\mathcal{G}_\infty^{\mathbb{Z}}(\mathcal{C})$ unless $n=1$, they do generate $\mathcal{G}_\infty^{\mathbb{Q}}(\mathcal{C})$.
\newline \newline \noindent
The way to express an element $X_{\vv{\lambda}}$ in terms of the $T_n(U)$ is encoded in a certain identity of generating functions. It is given by Theorem 9.10 of \cite{Ryba}:
\begin{theorem} \label{old_gen_fun}
Write $\Lambda_{\mathbb{Q}}^{(U)}$ for a copy of the ring of symmetric functions with rational coefficients, whose variables we associate with $U \in I(\mathcal{C})$. We work in the completed tensor product $\left(\bigotimes_{U \in I(\mathcal{C})} \Lambda_{\mathbb{Q}}^{(U)}\right) \hat{\otimes} \mathcal{G}(\mathcal{C})$. If $f$ is a symmetric function, we write $f^{(U)}$ to denote $f$ considered as an element of $\Lambda_{\mathbb{Q}}^{(U)}$. If $c_{\vv{\mu}}^{(U)}$ are constants, we define
\[
T_l\left(\sum_{\vv{\mu} \in \mathcal{P}^{\mathcal{C}}, U \in I(\mathcal{C})} c_{\vv{\mu}}^{(U)} \left( \prod_{V \in I(\mathcal{C})} p_{\vv{\mu}(V)}^{(V)} \right) U\right) = \sum_{\vv{\mu} \in \mathcal{P}, U \in I(\mathcal{C})} c_{\vv{\mu}}^{(U)} \left(\prod_{V \in I(\mathcal{C})} p_{\vv{\mu}(V)}^{(V)}\right) T_l(U)
\]
which is an element of $\left(\bigotimes_{U \in I(\mathcal{C})} \Lambda_{\mathbb{Q}}^{(U)}\right) \hat{\otimes} \mathcal{G}_\infty(\mathcal{C})$.
We have the following equality:
\[
\sum_{\vv{\lambda} \in \mathcal{P}^\mathcal{C}} \left( \prod_{U \in I(\mathcal{C})} s_{\vv{\lambda}(U)}^{(U)} \right) \otimes X_{\vv{\lambda}} =
\left(\sum_{r \geq 0} (-1)^r e_r^{(\mathbf{1})} \right)
\prod_{l=1}^{\infty}\exp \left(T_l \left(\log \left(1 + \sum_{U \in I(\mathcal{C})} p_l^{(U)}U\right) \right) \right).
\]
\end{theorem}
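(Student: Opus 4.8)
The plan is to derive this identity by stabilising the corresponding Frobenius-type character formula at each finite level $n$. At level $n$ the simple object $R_{\vv{\lambda}}$ is $\Ind{S_{\vv{\lambda}}}{S_{|\vv{\lambda}|}}$ of $\boxtimes_{U}\bigl(U^{\boxtimes|\vv{\lambda}(U)|}\otimes\mathcal{S}^{\vv{\lambda}(U)}\bigr)$; expanding each Specht module via $s_{\vv{\lambda}(U)}=\sum_{\mu}z_\mu^{-1}\chi^{\vv{\lambda}(U)}_\mu p_\mu$ and using that induction from a Young subgroup is the multiplication of $\bigoplus_n\mathcal{G}(\mathcal{W}_n(\mathcal{C}))\cong\bigotimes_{U\in I(\mathcal{C})}\Lambda$ --- the wreath-product characteristic map, under which $[R_{\vv{\lambda}}]\mapsto\prod_U s_{\vv{\lambda}(U)}^{(U)}$ --- one writes $[R_{\vv{\lambda}}]$ as a $\mathbb{Q}$-linear combination of ``coloured $l$-cycle'' classes $\tau_l(U)$, built from $U^{\boxtimes l}$ with an $l$-cycle acting on the tensor factors (the finite-level precursors of $T_l(U)$). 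This is the wreath-product character theory of \cite{Macdonald}, Ch.~I, App.~B; the only twist is that $I(\mathcal{C})$ simultaneously labels the simples and the ``colours,'' and that $T_l(U)$ is merely additive, not multiplicative, in $U$ --- a defect which will be exactly what records the ring structure of $\mathcal{G}(\mathcal{C})$ in the limit.

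Next I would pass to $n\to\infty$, applying the finite-level formula to $R_{\vv{\lambda}[n]}$. The bounded-length coloured cycle classes stabilise to the $T_l(U)$ --- this is precisely how $T_l(U)$ is constructed in Section 8 of \cite{Ryba} --- Theorem 7.10 of \cite{Ryba} supplies convergence of the structure constants, and the relation $T_l(U)T_l(V)-T_l(V)T_l(U)=T_l(UV)-T_l(VU)$, together with commutativity across distinct $l$, records how the commutativity of the finite-level algebra is deformed in the limit. The only non-transparent point is the factor attached to the unit object: since $\vv{\lambda}[n](\mathbf{1})=(n-|\vv{\lambda}(\mathbf{1})|,\vv{\lambda}(\mathbf{1})_1,\dots)$ has a first row of unbounded size, one must track the Schur function $s_{\vv{\lambda}(\mathbf{1})[n]}$. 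Expanding it by Jacobi--Trudi along its first row --- in the complete symmetric functions $h^{(\mathbf{1})}_m$ and the Schur functions of the remaining rows --- and simplifying with $H(t)E(-t)=1$, the long first row contributes in the limit exactly the scalar prefactor $\sum_{r\geq 0}(-1)^r e_r^{(\mathbf{1})}$, the determinant signs and the passage from $h$'s to $e$'s being governed by the involution $\omega$.

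It then remains to recognise the right-hand side. The product over $l\geq 1$ reflects the factorisation $\mathcal{G}_\infty^{\mathbb{Q}}(\mathcal{C})=\bigotimes_{l\geq 1}U(\mathbb{Q}\otimes_{\mathbb{Z}}\mathcal{G}(\mathcal{C})_l)$, so distinct cycle lengths contribute independently; the $\exp$ is the exponential formula, promoting a single degree-$l$ generator to all of $U(\mathbb{Q}\otimes_{\mathbb{Z}}\mathcal{G}(\mathcal{C})_l)$; and expanding $\log\bigl(1+\sum_U p_l^{(U)}U\bigr)$ produces monomials $p_l^{(U)}p_l^{(V)}\cdots\,UV\cdots$, so that applying $T_l$ yields $T_l(UV\cdots)$, which via the commutator relation is exactly the correct element of $U(\mathbb{Q}\otimes_{\mathbb{Z}}\mathcal{G}(\mathcal{C})_l)$. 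Matching the coefficient of $\prod_U s_{\vv{\lambda}(U)}^{(U)}$ on the two sides for every $\vv{\lambda}$ then gives the theorem; up to reorganisation this is the content of Section 9 of \cite{Ryba}.

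The step I expect to be the main obstacle is the second one: making the $n\to\infty$ limit rigorous and pinning down the unit-object correction factor, since that is where stabilisation genuinely enters and where the bookkeeping of the variable sets $\mathbf{x}_U$ (one per simple object of $\mathcal{C}$, with the extra row for $\mathbf{1}$) is most delicate. One could instead try to verify the identity directly inside $\mathcal{G}_\infty^{\mathbb{Q}}(\mathcal{C})$ by induction on $|\vv{\lambda}|$, using a Pieri-type recursion coming from multiplication by the single-row and single-column classes, but that approach needs as its input exactly the description of those classes --- essentially the generators $e_n(U)$ studied below --- and so does not really avoid the combinatorics above.
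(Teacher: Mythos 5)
First, a point of order: the paper does not prove this statement at all --- it is quoted verbatim as Theorem 9.10 of \cite{Ryba} --- so there is no in-paper argument to compare yours against. Judged on its own terms, your sketch correctly identifies the ingredients such a proof needs: the $T_l(U)$ as stable limits of coloured-cycle classes, Mackey theory for induced objects, the generating-function bookkeeping via $\prod_U s_{\vv{\lambda}(U)}^{(U)}$, and the fact that the prefactor $\sum_{r\geq 0}(-1)^r e_r^{(\mathbf{1})}$ is an inverse-Pieri correction coming from the unbounded first row of $\vv{\lambda}[n](\mathbf{1})$ (this is consistent with the remark in Section 3, where $X_{\vv{\lambda}}$ and the classes $Z_{\vv{\lambda}}$ obtained by deleting that prefactor are related precisely by the Pieri rule).

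As a proof, however, the sketch has a genuine gap at its centre. Your first step rests on the wreath-product characteristic map, under which the multiplication on $\bigoplus_n\mathcal{G}(\mathcal{W}_n(\mathcal{C}))$ is \emph{induction} from Young subgroups; but the identity to be proved lives in $\mathcal{G}_\infty^{\mathbb{Z}}(\mathcal{C})$, whose multiplication is the stabilised \emph{internal tensor product} of $\mathcal{W}_n(\mathcal{C})$. These are different operations (the first is commutative, the second is not), and the entire content of the theorem is the computation of the tensor products $T_{l_1}(U_1)\cdots T_{l_k}(U_k)$ and their decomposition into the $X_{\vv{\lambda}}$ --- that is, why the answer is $\prod_l\exp\bigl(T_l(\log(1+\sum_U p_l^{(U)}U))\bigr)$, with the $\log$ recording the ring structure of $\mathcal{G}(\mathcal{C})$ and the $\exp$ the PBW monomials. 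Your step 3 asserts that the right-hand side ``is exactly the correct element,'' but this is precisely what must be computed rather than recognised, and nothing in the sketch bridges the finite-level induction-product formula of step 1 to the tensor-product expansion of the right-hand side. Your step 2 (the rigorous stabilisation and the derivation of the unit-object prefactor), which you yourself flag as the main obstacle, is likewise left unexecuted. The proposal is therefore a plausible roadmap, broadly consistent with the strategy of Sections 8--9 of \cite{Ryba}, but not a proof.
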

\noindent
This theorem encodes how to express the $X_{\vv{\lambda}}$ (quantities of representation-theoretic interest) in terms of the $T_n(U)$ (amenable to computation). For our purposes we will be interested in a slightly different generating function which encodes a slightly different basis of $\mathcal{G}_\infty^{\mathbb{Z}}(\mathcal{C})$. Consider the elements $Z_{\vv{\lambda}}$ defined by the following equation:
\[
\sum_{\vv{\lambda} \in \mathcal{P}^\mathcal{C}} \left( \prod_{U \in I(\mathcal{C})} s_{\vv{\lambda}(U)}^{(U)} \right) \otimes Z_{\vv{\lambda}} =
\prod_{l=1}^{\infty}\exp \left(T_l \left(\log \left(1 + \sum_{U} p_l^{(U)}U\right) \right) \right)
\]
Because the omitted factor $\left(\sum_{r \geq 0} (-1)^r e_r^{(\mathbf{1})} \right)$ is equal to $1$ plus higher order terms, this means that $Z_{\vv{\lambda}}$ and $X_{\vv{\lambda}}$ agree up to lower order terms in the filtration on $\mathcal{G}_\infty^{\mathbb{Z}}(\mathcal{C})$. In particular the $Z_{\vv{\lambda}}$ form a $\mathbb{Z}$-basis of $\mathcal{G}_\infty^{\mathbb{Z}}(\mathcal{C})$.

\begin{remark}
The relation between the two bases ${X}_{\vv{\lambda}}$ and $Z_{\vv{\lambda}}$ is calculated in \cite{Ryba}, where elements denoted $\lim_{m \to \infty} \Ind{S_{\vv{\lambda}} \times S_m}{S_{|\vv{\lambda}|+m}} \left(\boxtimes_{U \in I(\mathcal{C})} \left(U^{\boxtimes |\vv{\lambda}|} \otimes \mathcal{S}^{\vv{\lambda}(U)}\right) \boxtimes \left(\mathbf{1}^{\boxtimes m} \otimes \mathbf{1}_{S_m}\right)\right)$ are shown to have the same generating function as the $Z_{\vv{\lambda}}$ (so they are equal to our $Z_{\vv{\lambda}}$). Then, in Proposition 9.5 and Remark 9.6, the decomposition into $X_{\vv{\lambda}}$ is explained using the Pieri rule.
\end{remark}
\noindent
It was shown in \cite{Nate} that the associated graded algebra of $\mathcal{G}_\infty^{\mathbb{Z}}(\mathcal{C})$ is a free polynomial algebra in basic hooks, which are defined as $X_{\vv{\lambda}}$, where $\vv{\lambda}(U)$ is nonzero for a unique $U \in I(\mathcal{C})$, and for this $U$, $\vv{\lambda}(U) = (1^i)$. 

\section{General Setup}
\noindent
The algebra $\mathcal{G}_\infty^{\mathbb{Z}}(\mathcal{C})$ depends on the category $\mathcal{C}$ only via the Grothendieck ring $\mathcal{G}(\mathcal{C})$. Furthermore, the construction is still possible after replacing $\mathcal{G}(\mathcal{C})$ with a suitable ring, even if that ring is not the Grothendieck ring of a ring category. To emphasise this, we work only with rings rather than categories. To take the place of $\mathcal{G}(\mathcal{C})$, $I(\mathcal{C})$, and $\mathcal{P}^{\mathcal{C}}$, we will use:
\begin{definition}
Let $R$ be a ring which is free as a $\mathbb{Z}$-module, and let $I$ be a $\mathbb{Z}$-basis of $R$. We define $\mathcal{P}^I = \{f: I \to \mathcal{P} \mid \sum_{U \in I} |f(U)| < \infty \}$, the set of $I$-indexed multipartitions.
\end{definition}
\begin{definition}
Define
\[
\mathcal{G}_\infty^\mathbb{Q}(R) = \bigotimes_{i \geq 1} U(\mathbb{Q} \otimes R_i),
\]
where $U(\mathbb{Q}\otimes R_i)$ is a copy of the universal enveloping algebra of $\mathbb{Q} \otimes R$ (where the Lie algebra structure is inherited from the associative algebra structure). Given $r \in \mathbb{Q} \otimes R$, we write $T_i(r)$ for the image of $r$ under the canonical map $\mathbb{Q} \otimes R \to U(\mathbb{Q} \otimes R_i)$.
\end{definition}
\begin{proposition}
There is a filtration on $\mathcal{G}_\infty^{\mathbb{Q}}(R)$ uniquely determined by making $T_{i_1}(r_1)T_{i_2}(r_2)\cdots T_{i_n}(r_n)$ lie in filtration degree $i_1 + i_2 + \cdots + i_n$.
\end{proposition}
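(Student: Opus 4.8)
The plan is to construct the filtration explicitly from a Poincar\'e--Birkhoff--Witt basis and then to check that it is multiplicative, exhaustive, and forced by the stated normalisation.

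First I would fix a total order on the basis $I$; as $R$ is free over $\mathbb{Z}$, the set $I$ is simultaneously a $\mathbb{Q}$-basis of $\mathbb{Q}\otimes R$. For each $i\geq 1$, the PBW theorem gives a $\mathbb{Q}$-basis of $U(\mathbb{Q}\otimes R_i)$ consisting of the ordered monomials $T_i(U_1)T_i(U_2)\cdots T_i(U_k)$ with $U_1\leq U_2\leq\cdots\leq U_k$ in $I$; taking products over $i$ of such monomials (all but finitely many factors being $1$) yields a $\mathbb{Q}$-basis $\mathcal{B}$ of $\mathcal{G}_\infty^{\mathbb{Q}}(R)$. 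Declare $T_i(U)$ to have weight $i$, extend multiplicatively to a weight $w(b)\in\mathbb{Z}_{\geq 0}$ of each basis element $b\in\mathcal{B}$, and set $F_d=\operatorname{span}_{\mathbb{Q}}\{\,b\in\mathcal{B} : w(b)\leq d\,\}$. Then $F_0\subseteq F_1\subseteq\cdots$ (indeed $F_0=\mathbb{Q}\cdot 1$, since every generator has weight $\geq 1$), and $\bigcup_d F_d=\mathcal{G}_\infty^{\mathbb{Q}}(R)$ because the $T_i(U)$ generate the algebra.

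The substantive point is $F_dF_e\subseteq F_{d+e}$, which reduces to the claim that every product $T_{i_1}(r_1)\cdots T_{i_n}(r_n)$ of generators expands in the basis $\mathcal{B}$ with all terms of weight $\leq i_1+\cdots+i_n$. By $\mathbb{Q}$-linearity of each $T_i$ we may take $r_1,\dots,r_n\in I$. I would straighten the word into PBW form, inducting on the pair (total weight, number of inversions) ordered lexicographically, where the total weight is $i_1+\cdots+i_n$, the relevant total order on entries is ``index first, then $I$-order'', and an inversion is a pair of entries occurring in the wrong relative order. A word with no inversions already lies in $\mathcal{B}$, with weight exactly $i_1+\cdots+i_n$. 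Otherwise pick an adjacent inverted pair. If $i_k\neq i_{k+1}$, the two generators lie in different tensor factors and commute, so transposing them preserves the weight and removes one inversion. If $i_k=i_{k+1}=i$, then $T_i(r_k)T_i(r_{k+1})=T_i(r_{k+1})T_i(r_k)+T_i(r_kr_{k+1}-r_{k+1}r_k)$: the first summand has the same weight and one fewer inversion, while the second is a word of length $n-1$ in which a pair of total weight $2i$ has been replaced by a single generator of weight $i$, so its weight has dropped by $i$ (and $r_kr_{k+1}-r_{k+1}r_k\in R$, so re-expanding $T_i$ keeps us among $I$-generators). In every case the induction hypothesis applies, proving the claim.

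For uniqueness, the stated normalisation forces any filtration with that property to contain every weight-$\leq d$ product of generators in degree $d$, hence to contain our $F_d$; since $F_\bullet$ itself has the property, it is the smallest --- hence ``the'' --- filtration so determined. I expect the straightening step to be the only real obstacle, and the key observation there is that the sole family of defining relations, $T_i(r)T_i(s)-T_i(s)T_i(r)=T_i(rs-sr)$, is weight-filtered with the bracket term of strictly smaller weight; everything else is bookkeeping with the PBW basis.
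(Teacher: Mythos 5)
Your proof is correct and is essentially the paper's argument written out in full: the paper constructs the same filtration by rescaling the standard PBW filtration on each tensor factor $U(\mathbb{Q}\otimes R_i)$ by $i$ and taking the induced filtration on the tensor product, and your straightening induction (commuting generators across distinct tensor factors, and using $T_i(r)T_i(s)-T_i(s)T_i(r)=T_i(rs-sr)$ to drop the weight by $i$) is exactly the standard verification that this PBW-type filtration is multiplicative. Your uniqueness argument via minimality matches the paper's one-line appeal to the fact that the monomials span.
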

\begin{proof}
Uniqueness follows because the monomials $T_{i_1}(r_1)T_{i_2}(r_2)\cdots T_{i_n}(r_n)$ span the algebra by the PBW theorem. Existence follows from observing that the filtration is constructed in the following way. Modify the usual PBW filtration on $U(\mathbb{Q} \otimes R_i)$ by multiplying all filtration degrees by $i$, and take the induced filtration on the tensor product
\[
\bigotimes_{i \geq 1} U(\mathbb{Q} \otimes R_i).
\]
\end{proof}

\begin{proposition} \label{assoc_graded_identification}
The associated graded algebra of $\mathcal{G}_\infty^{\mathbb{Q}}(R)$ is isomorphic to $\mathbb{Q} \otimes_{\mathbb{Z}} \bigotimes_{U \in I} \Lambda^{(U)}$ where the image of $T_l(U)$ is $\frac{p_l^{(U)}}{l}$.
\end{proposition}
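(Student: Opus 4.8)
The plan is to compute the associated graded algebra by commuting $\operatorname{gr}$ with the three operations used to build the filtration in the previous proposition: the (rescaled) PBW filtration on each factor $U(\mathbb{Q}\otimes R_i)$, the tensor product, and the filtered colimit producing the restricted infinite tensor product. First I would invoke the Poincar\'e--Birkhoff--Witt theorem: for any Lie algebra $\mathfrak{g}$ which is free as a $\mathbb{Q}$-module (here $\mathfrak{g}=\mathbb{Q}\otimes_{\mathbb{Z}}R$ with the commutator bracket, which is free since $R$ is free over $\mathbb{Z}$), the symbol map identifies $\operatorname{gr}U(\mathfrak{g})$ with the symmetric algebra $\operatorname{Sym}(\mathfrak{g})$; in particular the degree-one piece $\operatorname{gr}_1 U(\mathfrak{g})$ is exactly $\mathfrak{g}$ via $r\mapsto\operatorname{symbol}(r)$, so the elements $T_i(r)$ are genuine polynomial generators and not, say, zero in $\operatorname{gr}$. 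Multiplying the PBW degrees on $U(\mathbb{Q}\otimes R_i)$ by $i$, as in the construction of the filtration, merely rescales this: $\operatorname{gr}U(\mathbb{Q}\otimes R_i)\cong\operatorname{Sym}(\mathbb{Q}\otimes_{\mathbb{Z}}R)$ with $\mathbb{Q}\otimes_{\mathbb{Z}}R$ placed in degree $i$, and the symbol of $T_i(r)$ is the corresponding element of $\operatorname{Sym}^1$, living in degree $i$.

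Next I would use the standard fact that for filtered vector spaces over a field the associated graded of a tensor product, equipped with the induced filtration, is the tensor product of the associated gradeds (over a field all filtrations split, so this is immediate). Iterating this and passing to the colimit---the restricted infinite tensor product $\bigotimes_{i\geq1}U(\mathbb{Q}\otimes R_i)$ is the filtered colimit of the finite tensor products $U(\mathbb{Q}\otimes R_1)\otimes\cdots\otimes U(\mathbb{Q}\otimes R_N)$ along the filtered maps $a\mapsto a\otimes1$, and $\operatorname{gr}$ commutes with filtered colimits---gives an isomorphism of graded algebras
\[
\operatorname{gr}\mathcal{G}_\infty^{\mathbb{Q}}(R)\;\cong\;\bigotimes_{i\geq1}\operatorname{Sym}(\mathbb{Q}\otimes_{\mathbb{Z}}R_i),
\]
under which the symbol of $T_i(r)$ is the image of $r$ in the degree-$i$ copy of $\mathbb{Q}\otimes_{\mathbb{Z}}R$. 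Since $I$ is a $\mathbb{Z}$-basis of $R$ we have $\mathbb{Q}\otimes_{\mathbb{Z}}R=\bigoplus_{U\in I}\mathbb{Q}\,U$, hence $\operatorname{Sym}(\mathbb{Q}\otimes_{\mathbb{Z}}R)=\mathbb{Q}[U:U\in I]$ is a polynomial ring on $I$; therefore $\operatorname{gr}\mathcal{G}_\infty^{\mathbb{Q}}(R)$ is the polynomial ring $\mathbb{Q}[\xi_{i,U}:i\geq1,\ U\in I]$, where $\xi_{i,U}$ is the symbol of $T_i(U)$, placed in degree $i$.

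On the other side, by the standard structure of symmetric functions over $\mathbb{Q}$, the algebra $\mathbb{Q}\otimes_{\mathbb{Z}}\bigotimes_{U\in I}\Lambda^{(U)}$ is the polynomial ring $\mathbb{Q}[p_i^{(U)}:i\geq1,\ U\in I]$, graded so that $p_i^{(U)}$ has degree $i$ (the grading by size on each $\Lambda^{(U)}$). Comparing the two polynomial presentations, the assignment $\xi_{i,U}\mapsto\tfrac1i p_i^{(U)}$ is a degree-preserving bijection between sets of free generators, hence extends (uniquely) to an isomorphism of graded $\mathbb{Q}$-algebras; equivalently it sends the symbol of $T_l(U)$ to $\tfrac{p_l^{(U)}}{l}$. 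Extending by $\mathbb{Q}$-linearity in $R$---legitimate because $T_l$ and the symbol map $\mathbb{Q}\otimes_{\mathbb{Z}}R\to\operatorname{gr}_1$ are both linear---gives the stated identification for arbitrary arguments.

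The only point requiring any care is the commutation of $\operatorname{gr}$ with the infinite tensor product: this is not automatic for arbitrary filtrations, but here it reduces to the two-factor statement (standard over a field, since each $U(\mathbb{Q}\otimes R_i)$ carries an exhaustive filtration bounded below) together with the fact that the restricted infinite tensor product is a filtered colimit of the finite ones. Everything else is a direct application of PBW and the elementary observation that a graded algebra homomorphism carrying one set of free polynomial generators bijectively onto another, degree by degree, is an isomorphism.
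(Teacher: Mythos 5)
Your proof is correct and follows essentially the same approach as the paper: it rests on the two facts that $\operatorname{gr}\mathcal{G}_\infty^{\mathbb{Q}}(R)$ is the free polynomial algebra on the symbols of the $T_i(U)$ (which the paper asserts directly and you derive from PBW plus compatibility of $\operatorname{gr}$ with tensor products and filtered colimits) and that $\mathbb{Q}\otimes\Lambda$ is freely generated by $p_i/i$, and then matches free generators degree by degree. The extra care you take with the infinite tensor product and the rescaled filtration is a reasonable unpacking of what the paper leaves implicit, not a different route.
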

\begin{proof}
This follows from the fact that the associated graded algebra of $\mathcal{G}_\infty^{\mathbb{Q}}(R)$ is the free polynomial algebra generated by $T_i(U)$ for $i \in \mathbb{Z}_{> 0}$ and $U \in I$, together with the fact that $\mathbb{Q} \otimes \Lambda$ is generated by $\frac{p_i}{i}$ for $i \in \mathbb{Z}_{>0}$.
\end{proof}

\begin{definition} \label{general_Z_def}
Define $Z_{\vv{\lambda}} \in \mathcal{G}_\infty^\mathbb{Q}(R)$ (implicitly depending on $I$) by the following equality of generating functions:
\[
\sum_{\vv{\lambda} \in \mathcal{P}^\mathcal{C}} \left( \prod_{U \in I} s_{\vv{\lambda}(U)}^{(U)} \right) \otimes Z_{\vv{\lambda}} =
\prod_{l=1}^{\infty}\exp \left(T_l \left(\log \left(1 + \sum_{U \in I} p_l^{(U)}U\right) \right) \right),
\]
where, as before, 
\[
T_l\left(\sum_{\vv{\mu} \in \mathcal{P}^{\mathcal{C}}, U \in I(\mathcal{C})} c_{\vv{\mu}}^{(U)} \left( \prod_{V \in I(\mathcal{C})} p_{\vv{\mu}(V)}^{(V)} \right) U\right) = \sum_{\vv{\mu} \in \mathcal{P}, U \in I(\mathcal{C})} c_{\vv{\mu}}^{(U)} \left(\prod_{V \in I(\mathcal{C})} p_{\vv{\mu}(V)}^{(V)}\right) T_l(U).
\]
Also define $\mathcal{G}_\infty^{\mathbb{Z}}(R)$ as the $\mathbb{Z}$-span of the $Z_{\vv{\lambda}}$.
\end{definition}
\begin{theorem}
We have that $\mathcal{G}_\infty^{\mathbb{Z}}(R)$ is independent of the choice of basis $I \subset R$.
\end{theorem}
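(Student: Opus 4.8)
The plan is to reduce the statement to a change-of-variables computation with symmetric functions. The ring $\mathcal{G}_\infty^{\mathbb{Q}}(R) = \bigotimes_{i \geq 1} U(\mathbb{Q} \otimes R_i)$ and the operators $T_l$ are defined with no reference to a basis, so among the ingredients of Definition~\ref{general_Z_def} the only basis-dependent ones are the bookkeeping ring $\bigotimes_{U \in I} \Lambda^{(U)}$ and the resulting elements $Z_{\vv{\lambda}}$. So I would fix two $\mathbb{Z}$-bases $I, I'$ of $R$, write $\mathcal{G}^I$ and $\mathcal{G}^{I'}$ for the $\mathbb{Z}$-spans of the corresponding families $\{Z_{\vv{\lambda}}\}_{\vv{\lambda} \in \mathcal{P}^I}$ and $\{Z_{\vv{\mu}}\}_{\vv{\mu} \in \mathcal{P}^{I'}}$ inside $\mathcal{G}_\infty^{\mathbb{Q}}(R)$, and aim to prove $\mathcal{G}^I = \mathcal{G}^{I'}$. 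Let $B = (b_{U'U})$ be the change-of-basis matrix, so that $U' = \sum_{U \in I} b_{U'U} U$ with each row of $B$ finitely supported and $B^{-1}$ again integral with finitely supported rows. I would then build a ring isomorphism $\theta \colon \bigotimes_{U \in I} \Lambda^{(U)}_{\mathbb{Q}} \to \bigotimes_{U' \in I'} \Lambda^{(U')}_{\mathbb{Q}}$, determined by $p_l^{(U)} \mapsto \sum_{U' \in I'} b_{U'U}\, p_l^{(U')}$ for all $l \geq 1$ (working inside the grading-completed tensor products already used in Definition~\ref{general_Z_def} when $R$ has infinite rank). This $\theta$ is a graded isomorphism, placing each $p_l^{(\cdot)}$ in degree $l$, since for each $l$ it is an invertible $\mathbb{Z}$-linear substitution of the polynomial generators $p_l^{(\cdot)}$, with inverse coming from $B^{-1}$. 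By construction $(\theta \otimes \mathrm{id}_R)$ carries $\sum_{U \in I} p_l^{(U)} \otimes U$ to $\sum_{U' \in I'} p_l^{(U')} \otimes U'$, so — since $T_l$ acts only on the $R$-tensor factor while $\log$, $\exp$ and the product over $l$ are ring operations — $(\theta \otimes \mathrm{id})$ sends the right-hand side of the generating-function identity of Definition~\ref{general_Z_def} for $I$ to the one for $I'$.

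The key point is that $\theta$ preserves the integral forms, i.e.\ restricts to an isomorphism $\bigotimes_{U \in I} \Lambda^{(U)}_{\mathbb{Z}} \xrightarrow{\ \sim\ } \bigotimes_{U' \in I'} \Lambda^{(U')}_{\mathbb{Z}}$. As $\bigotimes_{U \in I} \Lambda^{(U)}_{\mathbb{Z}} = \mathbb{Z}[e_k^{(U)} : k \geq 1,\ U \in I]$, it is enough to check that $\theta(e_k^{(U)})$ has integral coordinates. Writing $E^{(U)}(t) = \sum_{k \geq 0} e_k^{(U)} t^k = \exp\!\big( \sum_{l \geq 1} \tfrac{(-1)^{l-1}}{l}\, p_l^{(U)} t^l \big)$, the defining substitution gives $\theta\big( E^{(U)}(t) \big) = \prod_{U' \in I'} E^{(U')}(t)^{\,b_{U'U}}$; each factor $E^{(U')}(t)^{\pm 1}$ has coefficients in $\Lambda^{(U')}_{\mathbb{Z}}$, the inverse being $E^{(U')}(t)^{-1} = \sum_{k \geq 0} (-1)^k h_k^{(U')} t^k$ (from $H(t)E(-t) = 1$), so $\theta(e_k^{(U)}) \in \bigotimes_{U' \in I'} \Lambda^{(U')}_{\mathbb{Z}}$. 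Running the same computation for $\theta^{-1}$ (built from $B^{-1}$, which is also integral) gives the reverse containment.

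Now I would apply $\theta \otimes \mathrm{id}$ to the identity $\sum_{\vv{\lambda}} \big( \prod_{U \in I} s^{(U)}_{\vv{\lambda}(U)} \big) \otimes Z_{\vv{\lambda}}$ for the basis $I$. By the integrality statement we may expand $\theta\big( \prod_{U \in I} s^{(U)}_{\vv{\lambda}(U)} \big) = \sum_{\vv{\mu}} N_{\vv{\lambda}\vv{\mu}} \prod_{U' \in I'} s^{(U')}_{\vv{\mu}(U')}$ with all $N_{\vv{\lambda}\vv{\mu}} \in \mathbb{Z}$; comparing coefficients with the $I'$-identity $\sum_{\vv{\mu}} \big( \prod_{U' \in I'} s^{(U')}_{\vv{\mu}(U')} \big) \otimes Z_{\vv{\mu}}$ along the $\mathbb{Z}$-basis $\{ \prod_{U' \in I'} s^{(U')}_{\vv{\mu}(U')} \}$ yields $Z_{\vv{\mu}}^{\,I'} = \sum_{\vv{\lambda}} N_{\vv{\lambda}\vv{\mu}}\, Z_{\vv{\lambda}}^{\,I}$. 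This sum is finite: $N_{\vv{\lambda}\vv{\mu}} = 0$ unless $|\vv{\lambda}| = |\vv{\mu}|$ because $\theta$ is graded, and — since $\theta(s^{(U)}_{\vv{\lambda}(U)})$ has zero constant term and involves only those variables $p_l^{(U')}$ with $b_{U'U} \neq 0$ — also unless every $U \in \mathrm{supp}(\vv{\lambda})$ satisfies $b_{U'U} \neq 0$ for some $U' \in \mathrm{supp}(\vv{\mu})$ (apply the ring map setting $p_l^{(U')}=0$ for $U'\notin\mathrm{supp}(\vv{\mu})$: if some $U$ failed this, the corresponding factor would die). Since each row of $B$ is finitely supported, this forces $\mathrm{supp}(\vv{\lambda})$ to lie in the fixed finite set $\bigcup_{U' \in \mathrm{supp}(\vv{\mu})} \{ U : b_{U'U} \neq 0 \}$, so only finitely many $\vv{\lambda}$ (of the required size) occur. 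Hence $Z_{\vv{\mu}}^{\,I'} \in \mathcal{G}^I$ for every $\vv{\mu}$, so $\mathcal{G}^{I'} \subseteq \mathcal{G}^I$, and the reverse inclusion follows by interchanging $I$ and $I'$.

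The substantive step is the integrality statement of the second paragraph; it is powered entirely by the two elementary facts $\log E(t) = \sum_{l \geq 1} \tfrac{(-1)^{l-1}}{l} p_l t^l$ and $E(t)^{-1} = \sum_{k \geq 0} (-1)^k h_k t^k \in \Lambda_{\mathbb{Z}}[[t]]$, and everything else is formal bookkeeping. The point requiring genuine care is the case when $R$ has infinite rank, where the sums and products above are infinite: these converge in the grading-completed tensor products used throughout Definition~\ref{general_Z_def}, and the finiteness established in the third paragraph is exactly what is needed to keep $Z_{\vv{\mu}}^{\,I'}$ inside the uncompleted $\mathbb{Z}$-span $\mathcal{G}^I$ rather than merely its completion. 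Note that neither the multiplication on $\mathcal{G}_\infty^{\mathbb{Z}}(R)$ nor any $\lambda$-ring structure plays any role: the assertion concerns only the underlying $\mathbb{Z}$-submodule of $\mathcal{G}_\infty^{\mathbb{Q}}(R)$.
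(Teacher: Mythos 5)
Your proof is correct and follows essentially the same route as the paper: both construct the change-of-basis isomorphism $\theta$ on $\bigotimes_U \Lambda^{(U)}$ determined by $p_l^{(U)} \mapsto \sum_{U'} b_{U'U} p_l^{(U')}$, observe that it intertwines the two generating-function identities of Definition \ref{general_Z_def}, and compare Schur-function coefficients. The only difference is one of exposition: where the paper delegates integrality of $\theta$ to Macdonald's treatment of variable sets with negative multiplicities, you verify it directly via $\theta(E^{(U)}(t)) = \prod_{U'} E^{(U')}(t)^{b_{U'U}}$ and $E(t)^{-1} = \sum_k (-1)^k h_k t^k$, and you also make explicit the finiteness of the resulting expansion of $Z_{\vv{\mu}}^{I'}$ in the $Z_{\vv{\lambda}}^{I}$ — both worthwhile details, but not a different argument.
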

\begin{proof}
Fix two bases $I$ and $I^\prime$ of $R$, writing $Z_{\vv{\lambda}}(I)$ and $Z_{\vv{\lambda}}(I^\prime)$ for the elements $Z_{\vv{\lambda}}$ defined using the bases $I$ and $I^\prime$ respectively. Consider the symmetric functions $p_l^{(V)}$ defined by 
\[
\sum_{V \in I^\prime} p_l^{(V)}V = \sum_{U \in I} p_l^{(U)} U.
\]
If the transition matrix between $I^\prime$ and $I$ is $a_{VU}$, $V = \sum_{U \in I} a_{VU} U$. This gives
\[
p_l^{(V)} = \sum_{U \in I} a_{V U} p_l^{(U)}.
\]
This extends from power-sum symmetric functions to arbitrary symmetric functions via
\[
f^{(V)} = f\left(\bigoplus_{U \in I} \mathbf{x}_{U}^{\oplus a_{VU}}\right)
\]
where the meaning of negative multiplicities of variable sets is as in Example 23 in Section 1.3 of \cite{Macdonald} (where $f(\mathbf{x}^{\oplus 1}, \mathbf{y}^{\oplus(-1)})$ is considered, but denoted $f(\mathbf{x}/\mathbf{y})$). Because $a_{VU}$ is invertible, we have constructed an isomorphism
\[
\theta: \bigotimes_{V \in I^\prime} \Lambda^{(V)} \to \bigotimes_{U \in I} \Lambda^{(U)}
\]
which moreover satisfies
\begin{eqnarray*}
\sum_{\vv{\lambda} \in \mathcal{P}^{I^\prime}} \theta \left( \prod_{V \in I^\prime} s_{\vv{\lambda}(V)}^{(V)} \right) \otimes Z_{\vv{\lambda}}(I^\prime) &=& \prod_{l=1}^{\infty}\exp \left(T_l \left(\log \left(1 + \sum_{V \in I^\prime} \theta( p_l^{(V)})V\right) \right) \right) \\
&=& \prod_{l=1}^{\infty}\exp \left(T_l \left(\log \left(1 + \sum_{U \in I} p_l^{(U)}U\right) \right) \right) \\
&=& \sum_{\vv{\lambda} \in \mathcal{P}^{I}} \left( \prod_{U \in I} s_{\vv{\lambda}(U)}^{(U)} \right) \otimes Z_{\vv{\lambda}}(I).
\end{eqnarray*}
To recover $Z_{\vv{\lambda}}(I)$ in terms of $Z_{\vv{\lambda}}(I^\prime)$ it suffices to take the inner product of both sides with $\prod_{U \in I} s_{\vv{\lambda}(U)}^{(U)}$. In particular, this means that the $\mathbb{Z}$-linear span of $Z_{\vv{\lambda}}(I)$ is independent of the choice of basis $I$.
\end{proof}

\begin{proposition} \label{Z_graded_image}
In the associated graded algebra of $\mathcal{G}_\infty^{\mathbb{Q}}(R)$, isomorphic to $\mathbb{Q} \otimes_{\mathbb{Z}} \bigotimes_{U \in I} \Lambda^{(U)}$ (as discussed in proposition Proposition \ref{assoc_graded_identification}), the image of $Z_{\vv{\lambda}}$ is $\prod_{U \in I}s_{\vv{\lambda}(U)}^{(U)}$.
\end{proposition}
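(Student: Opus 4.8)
The plan is to read off both the filtration degree of $Z_{\vv{\lambda}}$ and its leading term directly from the defining generating function of Definition \ref{general_Z_def}, by comparing two gradings: the (external) grading on the coefficient ring $\bigotimes_{U \in I}\Lambda_{\mathbb{Q}}^{(U)}$ in which $p_l^{(U)}$ is homogeneous of degree $l$, and the filtration on $\mathcal{G}_\infty^{\mathbb{Q}}(R)$ in which $T_{i_1}(r_1)\cdots T_{i_n}(r_n)$ has degree $i_1+\cdots+i_n$. First I would write $L_l=\log\bigl(1+\sum_{U\in I}p_l^{(U)}U\bigr)$ and decompose it by degree in the coefficient ring as $L_l=\sum_{k\geq 1}L_{l,k}$, where $L_{l,k}$ is homogeneous of degree $kl$ with values in $\mathbb{Q}\otimes R$, and $L_{l,1}=\sum_{U\in I}p_l^{(U)}U$. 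Expanding $\prod_{l\geq 1}\exp(T_l(L_l))$, every resulting monomial is, up to a rational scalar, of the form (product of symmetric-function coefficients)$\,\cdot\,\prod_{l}T_l(L_{l,k^{(l)}_1})\cdots T_l(L_{l,k^{(l)}_{m_l}})$, which has degree $\sum_l l\sum_j k^{(l)}_j$ in the coefficient ring and filtration degree $\sum_l m_l l$ in $\mathcal{G}_\infty^{\mathbb{Q}}(R)$. Since every $k^{(l)}_j\geq 1$, the coefficient-ring degree is always at least the filtration degree, with equality precisely when every $k^{(l)}_j=1$, i.e.\ when each $L_l$ is replaced by its linear part $L_{l,1}$.

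Then I would take the inner product of the defining identity with $\prod_{U\in I}s_{\vv{\lambda}(U)}^{(U)}$, which extracts the contributions of coefficient-ring degree $|\vv{\lambda}|$; by the previous paragraph these all have filtration degree $\leq|\vv{\lambda}|$, so $Z_{\vv{\lambda}}$ lies in filtration degree $\leq|\vv{\lambda}|$, and its associated-graded class in degree $|\vv{\lambda}|$ is computed by the same inner product applied to the truncated product $\prod_{l\geq 1}\exp\bigl(T_l(L_{l,1})\bigr)=\prod_{l\geq 1}\exp\bigl(\sum_{U\in I}p_l^{(U)}T_l(U)\bigr)$ (for fixed $\vv{\lambda}$ only finitely many $l$ and finitely many monomials contribute, so every product here is finite). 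In the truncated product every monomial has coefficient-ring degree equal to its filtration degree, so the coefficient of $\prod_{U}s_{\vv{\lambda}(U)}^{(U)}$ in it is already homogeneous of filtration degree $|\vv{\lambda}|$; hence applying the identification of Proposition \ref{assoc_graded_identification} amounts to substituting $T_l(U)\mapsto \tfrac1l p_l^{(U)}$ into the second, ``target'' copy of $\bigotimes_{U}\Lambda^{(U)}$. Since the images now commute, the product of exponentials collapses:
\[
\prod_{l\geq 1}\exp\Bigl(\sum_{U\in I}p_l^{(U)}\cdot\tfrac1l p_l^{(U)}\Bigr)=\prod_{U\in I}\exp\Bigl(\sum_{l\geq 1}\frac{p_l^{(U)}p_l^{(U)}}{l}\Bigr),
\]
where in each factor one $p_l^{(U)}$ lives in the coefficient copy and the other in the target copy of $\Lambda^{(U)}$.

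Finally, applying the Cauchy identity $\exp\bigl(\sum_{l\geq 1}\tfrac{p_l(\mathbf{x})p_l(\mathbf{y})}{l}\bigr)=\sum_{\mu\in\mathcal{P}}s_\mu(\mathbf{x})s_\mu(\mathbf{y})$ to each factor, taking the coefficient copy of $\Lambda^{(U)}$ for the $\mathbf{x}$-variables and the target copy for the $\mathbf{y}$-variables, rewrites the displayed product as
\[
\prod_{U\in I}\Bigl(\sum_{\mu\in\mathcal{P}}s_\mu^{(U)}\,s_\mu^{(U)}\Bigr)=\sum_{\vv{\lambda}\in\mathcal{P}^I}\Bigl(\prod_{U\in I}s_{\vv{\lambda}(U)}^{(U)}\Bigr)\Bigl(\prod_{U\in I}s_{\vv{\lambda}(U)}^{(U)}\Bigr),
\]
with the first factor in the coefficient copy and the second in the target copy. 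Since the $\prod_{U}s_{\vv{\lambda}(U)}^{(U)}$ are linearly independent in the coefficient ring, matching the coefficient of $\prod_{U}s_{\vv{\lambda}(U)}^{(U)}$ shows that the image of $Z_{\vv{\lambda}}$ in the associated graded is $\prod_{U\in I}s_{\vv{\lambda}(U)}^{(U)}$ (in particular $Z_{\vv{\lambda}}$ has filtration degree exactly $|\vv{\lambda}|$). I expect the only delicate point to be the degree bookkeeping of the first paragraph — that the external grading used to define the $Z_{\vv{\lambda}}$ dominates the filtration term by term, with equality exactly on the linear parts of the logarithms — after which the computation is a direct application of Proposition \ref{assoc_graded_identification} and the Cauchy identity.
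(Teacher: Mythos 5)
Your proof is correct and follows essentially the same route as the paper: replace each $\log\bigl(1+\sum_U p_l^{(U)}U\bigr)$ by its linear part $\sum_U p_l^{(U)}U$, substitute $T_l(U)\mapsto \tfrac{1}{l}p_l^{(U)}$ in the associated graded, and conclude via the Cauchy identity. The only difference is that you spell out the degree bookkeeping (coefficient-ring degree dominates filtration degree, with equality exactly on the linear parts) that the paper leaves implicit when it says ``we approximate''; this is a welcome but not essentially different addition.
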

\begin{proof}
Recall that $Z_{\vv{\lambda}}$ was defined by the equation
\[
\sum_{\vv{\lambda} \in \mathcal{P}^\mathcal{C}} \left( \prod_{U \in I} s_{\vv{\lambda}(U)}^{(U)} \right) \otimes Z_{\vv{\lambda}} =
\prod_{l=1}^{\infty}\exp \left(T_l \left(\log \left(1 + \sum_{U \in I} p_l^{(U)}U\right) \right) \right).
\]
To calculate the leading coefficient of $\prod_{U \in I} s_{\vv{\lambda}(U)}^{(U)}$ as a polynomial in the $T_i(U)$, we approximate 
\[
T_l \left(\log \left(1 + \sum_{U \in I} p_l^{(U)}U\right)\right)
\]
by
\[
\sum_{U \in I} T_l(U)p_l^{(U)}.
\]
This turns the generating function
\[
\prod_{l=1}^{\infty}\exp \left(T_l \left(\log \left(1 + \sum_{U \in I} p_l^{(U)}U\right) \right) \right)
\]
into
\[
\prod_{l=1}^{\infty}\exp \left(\sum_{U \in I} p_l^{(U)}T_l(U) \right).
\]
Let the symmetric function variables be $\mathbf{x}_U$, and think of $T_l(U)$ as $\frac{p_l(\mathbf{y}_U)}{l}$ (as the $T_l(U)$ are algebraically independent in the associated graded algebra). Then, the generating function becomes
\[
\prod_{U \in I}\exp \left(\sum_{l \geq 1} \frac{p_l(\mathbf{x}_U)p_l(\mathbf{y}_U)}{l} \right).
\]
which we recognise as the product of instances of the Cauchy identity (one for each element of $I$). The coefficient of $\prod_{U \in I} s_{\vv{\lambda}(U)}(\mathbf{x})_U)$ is therefore $\prod_{U \in I} s_{\vv{\lambda}(U)}(\mathbf{y}_U)$.
\end{proof}

\begin{corollary}
The $Z_{\vv{\lambda}}$ are a basis of $\mathcal{G}_\infty^{\mathbb{Q}}(R)$.
\end{corollary}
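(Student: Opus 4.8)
The plan is to run the standard ``a homogeneous basis of the associated graded lifts to a basis of the filtered algebra'' argument, the real content having already been supplied by Proposition~\ref{Z_graded_image}. First I would recall that, by Proposition~\ref{assoc_graded_identification}, the associated graded of $\mathcal{G}_\infty^{\mathbb{Q}}(R)$ for the filtration making $T_{i_1}(r_1)\cdots T_{i_n}(r_n)$ lie in degree $i_1+\cdots+i_n$ is $\mathbb{Q}\otimes_{\mathbb{Z}}\bigotimes_{U\in I}\Lambda^{(U)}$, graded so that $p_l^{(U)}$ sits in degree $l$; hence its degree-$n$ component is the $\mathbb{Q}$-span of the products $\prod_{U\in I}s_{\vv{\lambda}(U)}^{(U)}$ with $|\vv{\lambda}|=n$. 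Since the Schur functions form a $\mathbb{Z}$-basis of $\Lambda$ and each element of $\bigotimes_{U\in I}\Lambda^{(U)}$ involves only finitely many nontrivial tensor factors, the family $\{\prod_{U\in I}s_{\vv{\lambda}(U)}^{(U)}\}_{\vv{\lambda}\in\mathcal{P}^I}$ is a $\mathbb{Z}$-basis of $\bigotimes_{U\in I}\Lambda^{(U)}$, and so after rationalising it is a $\mathbb{Q}$-basis; restricting to a fixed total degree $n$ gives a basis of $\mathrm{gr}_n \mathcal{G}_\infty^{\mathbb{Q}}(R)$.

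Next I would invoke the elementary lifting principle: if a $\mathbb{Q}$-vector space $V$ carries an exhaustive filtration $0=F_{-1}\subseteq F_0\subseteq F_1\subseteq\cdots$, and $\{v_j\}$ is a family with $v_j\in F_{d_j}$ whose images $\overline{v_j}\in F_{d_j}/F_{d_j-1}$ form, for every $n$, a basis of $\mathrm{gr}_n V$, then $\{v_j\}$ is a basis of $V$. Spanning follows by induction on filtration degree: given $w\in F_n$, subtract a suitable $\mathbb{Q}$-combination of the $v_j$ with $d_j=n$ to land in $F_{n-1}$, and recurse, using exhaustiveness of the filtration. Linear independence follows by taking a hypothetical finite relation, looking at the largest $n$ for which some $v_j$ with $d_j=n$ occurs, and reading the relation modulo $F_{n-1}$ to contradict independence of the $\overline{v_j}$ in degree $n$. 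The filtration on $\mathcal{G}_\infty^{\mathbb{Q}}(R)$ satisfies the hypotheses: it is exhaustive because the $T$-monomials span and each lies in a finite filtration degree, and it is bounded below with $F_0=\mathbb{Q}\cdot 1$, since the modified PBW filtrations on the tensor factors $U(\mathbb{Q}\otimes R_i)$ begin in degree $0$.

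Finally, Proposition~\ref{Z_graded_image} identifies the image of $Z_{\vv{\lambda}}$ in $\mathrm{gr}_{|\vv{\lambda}|}\mathcal{G}_\infty^{\mathbb{Q}}(R)$ with $\prod_{U\in I}s_{\vv{\lambda}(U)}^{(U)}$. Feeding this into the lifting principle, together with the fact established in the first paragraph that these products constitute a basis of the associated graded, yields that $\{Z_{\vv{\lambda}}\}_{\vv{\lambda}\in\mathcal{P}^I}$ is a $\mathbb{Q}$-basis of $\mathcal{G}_\infty^{\mathbb{Q}}(R)$.

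I do not expect a serious obstacle here, as Proposition~\ref{Z_graded_image} carries the real weight. The one point needing a little care is that when $I$ is infinite the graded components $\mathrm{gr}_n\mathcal{G}_\infty^{\mathbb{Q}}(R)$ are infinite-dimensional, so the proof cannot be closed by comparing dimensions degree by degree; the inductive lifting argument above is precisely what lets one pass from a homogeneous basis of $\mathrm{gr}$ to a basis of the filtered algebra with no finiteness assumption on the graded pieces.
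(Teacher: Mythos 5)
Your argument is correct and is exactly the approach the paper takes: the paper's proof is a one-line appeal to the fact that the images of the $Z_{\vv{\lambda}}$ in the associated graded form a basis, and your writeup simply unpacks the standard ``lift a homogeneous basis from the associated graded'' principle that this appeal implicitly invokes. Your remark about avoiding dimension-counting when $I$ is infinite is a sensible precaution, but it is the same proof.
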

\begin{proof}
This follows from the fact that the images of the $Z_{\vv{\lambda}}$ in the associated graded algebra (see Proposition \ref{Z_graded_image}) form a basis.
\end{proof}

\begin{definition}
Let us write
\[
\Theta_l(x) = \exp(T_l(\log(x))),
\]
where $x$ may be any series in the completed tensor product $\left( \bigotimes_{U \in I} \Lambda^{(U)}\right) \hat{\otimes} R$ whose lowest order term is $1$.
\end{definition}

\begin{remark}
With this notation we may rewrite the equation in Definition \ref{general_Z_def} as
\[
\sum_{\vv{\lambda} \in \mathcal{P}^I} \left( \prod_{U \in I} s_{\vv{\lambda}(U)}^{(U)} \right) \otimes Z_{\vv{\lambda}} =
\prod_{l=1}^{\infty}\Theta_l \left(1 + \sum_{U \in I} p_l^{(U)}U\right).
\]
\end{remark}

\begin{proposition}
We have the following identity:
\[
\Theta_l(x)\Theta_l(y) = \Theta_l(xy).
\]
\end{proposition}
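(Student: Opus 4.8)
The plan is to recognise the map $x \mapsto \Theta_l(x)$ as the homomorphism of pro-unipotent groups induced by the Lie algebra homomorphism $T_l$, so that the claimed multiplicativity becomes the statement that applying a Lie homomorphism commutes with the Baker--Campbell--Hausdorff (BCH) series. First I would fix the ambient setting: work over $\mathbb{Q}$ and in the completion by total degree in the symmetric-function variables, so that $\exp$ and $\log$ of series with trivial degree-zero part converge. I would extend $T_l$ by $\left(\bigotimes_{U \in I}\Lambda^{(U)}\right)$-linearity (the symmetric-function coefficients being central) to a continuous map
\[
T_l \colon \left(\bigotimes_{U \in I}\Lambda^{(U)}\right) \hat\otimes R \longrightarrow \left(\bigotimes_{U \in I}\Lambda^{(U)}\right) \hat\otimes \mathcal{G}_\infty^{\mathbb{Q}}(R),
\]
and record that it is a homomorphism of Lie algebras for the commutator brackets: on $R$ it is the canonical map into a universal enveloping algebra, equivalently the relation $[T_l(U), T_l(V)] = T_l(UV) - T_l(VU)$ recalled in Section 3, and the symmetric-function coefficients do not interfere because they are central.

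Next I would apply BCH twice. Writing $a = \log(x)$ and $b = \log(y)$ (elements of positive degree), BCH in the complete filtered associative $\mathbb{Q}$-algebra $\left(\bigotimes_{U \in I}\Lambda^{(U)}\right) \hat\otimes R$ gives $xy = \exp(a)\exp(b) = \exp(\mathrm{BCH}(a,b))$, hence $\log(xy) = \mathrm{BCH}(a,b)$, a convergent Lie series in $a,b$ with rational coefficients. Since $T_l$ is a continuous Lie homomorphism it commutes term-by-term with that Lie series, so $T_l(\log(xy)) = \mathrm{BCH}(T_l(a), T_l(b))$; and since $T_l(a)$ and $T_l(b)$ again have positive degree, running BCH backwards in $\left(\bigotimes_{U \in I}\Lambda^{(U)}\right) \hat\otimes \mathcal{G}_\infty^{\mathbb{Q}}(R)$ yields $\exp(\mathrm{BCH}(T_l(a), T_l(b))) = \exp(T_l(a))\exp(T_l(b))$. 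Chaining these equalities gives $\Theta_l(xy) = \exp(T_l(\log(xy))) = \exp(T_l(a))\exp(T_l(b)) = \Theta_l(x)\Theta_l(y)$.

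The main point to be careful about is the convergence and well-definedness of all the series involved --- the logarithms, the BCH series, and the final exponentials. This is handled by the observation that $\log(x)$, $\log(y)$, and their images under $T_l$ have no degree-zero component in the symmetric-function grading, so a Lie word or a power of length $k$ contributes only in degrees $\geq k$; hence each fixed degree receives only finitely many contributions and no completion of $\mathcal{G}_\infty^{\mathbb{Q}}(R)$ itself is needed. If $R$ were commutative the whole argument would collapse to $\log(xy) = \log(x) + \log(y)$ together with linearity of $T_l$; BCH is precisely what replaces this in the general noncommutative case, so ensuring the BCH manipulations are carried out in a setting where they are legitimate is really the only subtlety.
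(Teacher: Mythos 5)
Your proof is correct and follows essentially the same route as the paper's: both arguments identify $T_l$ as a Lie algebra homomorphism on the completed tensor product and use the Baker--Campbell--Hausdorff formula twice, so that $\Theta_l(x)\Theta_l(y) = \exp(\mathrm{BCH}(T_l(\log x), T_l(\log y))) = \exp(T_l(\mathrm{BCH}(\log x, \log y))) = \Theta_l(xy)$. Your additional attention to convergence via the positive-degree filtration is a welcome refinement but does not change the substance of the argument.
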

\begin{proof}
We make use of the Baker-Campbell-Hausdorff formula. Let $\mbox{BCH}(A,B) = \log(\exp(A)\exp(B)) = A + B + \frac{1}{2}[A,B] + \cdots$, this is a sum of iterated commutators of $A$ and $B$ (the monomials $A$ and $B$ are thought of as a commutator iterated zero times). We have the relation $T_l([U,V]) = [T_l(U), T_l(V)]$ in the universal enveloping algebra. Let $\widehat{\otimes}$ denote the completed tensor product of graded algebras (completion of the usual tensor product with respect to the grading). Thus $T_l$ defines a Lie algebra homomorphism $\left( \bigotimes_{U \in I} \Lambda^{(U)}\right) \widehat{\otimes} R \to \left( \bigotimes_{U \in I} \Lambda^{(U)}\right) \widehat{\otimes} \mathcal{G}_\infty^{\mathbb{Q}}(R)$ (where the Lie algebra structure is inherited from the associative algebra structure, and the right hand factors have trivial gradings). We have that $T_l(BCH(-, -)) = \mbox{BCH}(T_l(-), T_l(-))$ (as $\mbox{BCH}$ is a sum of iterated commutators and $T_l(-)$ is a Lie algebra homomorphism). Now we calculate:
\begin{eqnarray*}
& &\Theta_l(x)\Theta_l(y) \\
&=&\exp(T_l(\log(x))) \exp(T_l(\log(y))) \\
&=& \exp(\mbox{BCH}(T_l(\log(x)),T_l(\log(y)))) \\
&=& \exp(T_l(\mbox{BCH}(\log(x),\log(y)))) \\
&=& 
\exp(T_l(\log(xy))) \\
&=& \Theta_l(xy)
\end{eqnarray*}

\end{proof}

\begin{theorem} \label{mult_gen_fn}
Let $N_{V,W}^U$ be the structure tensor of $R$ with respect to the basis $I$ (so that $VW = \sum_U N_{V,W}^U U$ for $U,V,W \in I$). We have the following equality of generating functions.
\begin{eqnarray*}
& &
\left(\sum_{\vv{\mu} \in \mathcal{P}^{I}} \left(\prod_{U \in I} s_{\vv{\mu}(U)}(\mathbf{x}_U) \right) \otimes Z_{\vv{\mu}} \right)
\left(\sum_{\vv{\nu} \in \mathcal{P}^{I}} \left(\prod_{U \in I} s_{\vv{\nu}(U)}(\mathbf{y}_U) \right) \otimes Z_{\vv{\nu}} \right) \\
&=&
\sum_{\vv{\lambda} \in \mathcal{P}^{I}} \left(\prod_{U \in I} s_{\vv{\lambda}(U)}(\mathbf{x}_U, \mathbf{y}_U, \bigoplus_{V, W \in I} (\mathbf{x}_V\mathbf{y}_W)^{\oplus N_{V, W}^U}) \right) \otimes Z_{\vv{\lambda}} 
\end{eqnarray*}
\end{theorem}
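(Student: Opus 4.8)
The plan is to obtain both sides of the asserted identity by applying one and the same substitution homomorphism of symmetric functions to the defining generating function of Definition \ref{general_Z_def}. Introduce the $\mathbb{Q}$-algebra homomorphism
\[
\phi\colon \bigotimes_{U\in I}\Lambda_{\mathbb{Q}}^{(U)} \longrightarrow \Bigl(\bigotimes_{V\in I}\Lambda_{\mathbb{Q}}^{(V)}\Bigr)\,\widehat{\otimes}\,\Bigl(\bigotimes_{W\in I}\Lambda_{\mathbb{Q}}^{(W)}\Bigr),
\]
where the target is the degree-completed ring of symmetric functions in two disjoint families of variable sets $\mathbf{x}_V$ and $\mathbf{y}_W$ ($V,W\in I$), and $\phi$ is determined by
\[
\phi(p_l^{(U)}) = p_l(\mathbf{x}_U) + p_l(\mathbf{y}_U) + \sum_{V,W\in I} N_{V,W}^U\, p_l(\mathbf{x}_V)p_l(\mathbf{y}_W).
\]
Such a $\phi$ exists and is unique because $\Lambda_{\mathbb{Q}}$ is the polynomial ring on the $p_l$. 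Using the conventions of Section 2 for combined variable sets ($p_l(\mathbf{z}^{\oplus n}) = n\,p_l(\mathbf{z})$, $p_l(\mathbf{z},\mathbf{w}) = p_l(\mathbf{z})+p_l(\mathbf{w})$, $p_l(\mathbf{z}\mathbf{w}) = p_l(\mathbf{z})p_l(\mathbf{w})$, with negative multiplicities interpreted as in Macdonald), together with the fact that a symmetric function is a polynomial in its power sums, $\phi$ sends $s_{\vv{\lambda}(U)}^{(U)}$ to $s_{\vv{\lambda}(U)}\bigl(\mathbf{x}_U,\mathbf{y}_U,\bigoplus_{V,W\in I}(\mathbf{x}_V\mathbf{y}_W)^{\oplus N_{V,W}^U}\bigr)$. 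Hence applying $\phi\,\widehat{\otimes}\,\mathrm{id}$ to the defining equation of $Z_{\vv{\lambda}}$ turns its left-hand side into exactly the right-hand side of the theorem.

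It then remains to identify the image under $\phi\,\widehat{\otimes}\,\mathrm{id}$ of the right-hand side of that defining equation, namely $\prod_{l\geq1}\Theta_l\bigl(1 + \sum_{U\in I} \phi(p_l^{(U)})\,U\bigr)$. The formula for $\phi(p_l^{(U)})$ is chosen precisely so that, using $\sum_{U\in I} N_{V,W}^U\,U = VW$, the argument of $\Theta_l$ factors as
\[
1 + \sum_{U\in I}\phi(p_l^{(U)})\,U \;=\; \Bigl(1 + \sum_{V\in I} p_l(\mathbf{x}_V)\,V\Bigr)\Bigl(1 + \sum_{W\in I} p_l(\mathbf{y}_W)\,W\Bigr),
\]
the $\mathbf{x}$-factor appearing on the left because $VW$ places the $\mathbf{x}$-label first; since $R$ need not be commutative this ordering is essential, and it matches the order $Z_{\vv{\mu}}Z_{\vv{\nu}}$ on the left of the theorem. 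By the identity $\Theta_l(xy) = \Theta_l(x)\Theta_l(y)$ each factor of the product equals $\Theta_l\bigl(1+\sum_V p_l(\mathbf{x}_V)V\bigr)\,\Theta_l\bigl(1+\sum_W p_l(\mathbf{y}_W)W\bigr)$.

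Finally I would regroup the infinite product. Because $T_l(a)$ and $T_m(b)$ commute for $l\neq m$, the factors $\Theta_l(-)$ and $\Theta_m(-)$ commute whenever $l\neq m$, and modulo any fixed degree only finitely many factors differ from $1$; hence
\[
\prod_{l\geq1} \Theta_l\bigl(1+\textstyle\sum_V p_l(\mathbf{x}_V)V\bigr)\Theta_l\bigl(1+\textstyle\sum_W p_l(\mathbf{y}_W)W\bigr) = \Bigl(\prod_{l\geq1}\Theta_l\bigl(1+\textstyle\sum_V p_l(\mathbf{x}_V)V\bigr)\Bigr)\Bigl(\prod_{l\geq1}\Theta_l\bigl(1+\textstyle\sum_W p_l(\mathbf{y}_W)W\bigr)\Bigr).
\]
By Definition \ref{general_Z_def}, applied with the variable sets $\mathbf{x}_U$ and with the variable sets $\mathbf{y}_U$ respectively, the two factors on the right are $\sum_{\vv{\mu}}\prod_U s_{\vv{\mu}(U)}(\mathbf{x}_U)\otimes Z_{\vv{\mu}}$ and $\sum_{\vv{\nu}}\prod_U s_{\vv{\nu}(U)}(\mathbf{y}_U)\otimes Z_{\vv{\nu}}$, i.e.\ the left-hand side of the theorem. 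Comparing the two expressions for $(\phi\,\widehat{\otimes}\,\mathrm{id})$ applied to the defining equation completes the proof.

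The main obstacle is not conceptual but a matter of rigour: one must justify that $\phi\,\widehat{\otimes}\,\mathrm{id}$ commutes with the formation of $\Theta_l$ and with the infinite product. This reduces to the observation that $T_l$, as used in these generating functions (compare Theorem \ref{old_gen_fun}), is extended $\Lambda_{\mathbb{Q}}$-linearly, so that $\phi$ may be pushed successively through $\log$, through $T_l$, and through $\exp$ — provided one keeps track of the fact that the symmetric-function coefficients are central in the completed tensor product whereas the elements of $R$ are not. Keeping the noncommutative ordering and the index placement ($N_{V,W}^U$ versus $N_{U,V}^W$) straight is the only delicate part of the computation itself.
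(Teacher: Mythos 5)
Your proposal is correct and follows essentially the same route as the paper: both arguments come down to the factorization $1+\sum_{U}\bigl(p_l(\mathbf{x}_U)+p_l(\mathbf{y}_U)+\sum_{V,W}N_{V,W}^{U}p_l(\mathbf{x}_V)p_l(\mathbf{y}_W)\bigr)U=\bigl(1+\sum_V p_l(\mathbf{x}_V)V\bigr)\bigl(1+\sum_W p_l(\mathbf{y}_W)W\bigr)$ combined with the multiplicativity of $\Theta_l$ and the commutation of $\Theta_l$ with $\Theta_m$ for $l\neq m$. The paper simply runs this as a direct chain of equalities starting from the product of the two generating functions rather than packaging the variable change as a substitution homomorphism $\phi$; your explicit attention to regrouping the infinite product and to the noncommutative ordering is a point the paper leaves implicit.
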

\begin{proof}
We use the multiplicative expression for these generating functions from Definition \ref{general_Z_def}.\begin{eqnarray*}
& &
\left(\sum_{\vv{\mu} \in \mathcal{P}^{I}} \left(\prod_{U \in I} s_{\vv{\mu}(U)}(\mathbf{x}_U) \right) \otimes Z_{\vv{\mu}} \right)
\left(\sum_{\vv{\nu} \in \mathcal{P}^{I}} \left(\prod_{U \in I} s_{\vv{\nu}(U)}(\mathbf{y}_U) \right) \otimes Z_{\vv{\nu}} \right) \\
&=&
\prod_{l=1}^{\infty}\Theta_l \left(1 + \sum_{U \in I} p_l(\mathbf{x}_U)U\right)
\prod_{l=1}^{\infty}\Theta_l \left(1 + \sum_{U \in I} p_l(\mathbf{y}_U)U\right) \\
&=&
\prod_{l=1}^{\infty}\Theta_l \left((1 + \sum_{U \in I} p_l(\mathbf{x}_U)U)(1 + \sum_{U \in I} p_l(\mathbf{y}_U)U)\right) \\
&=&
\prod_{l=1}^{\infty}\Theta_l \left(1 + \sum_{U \in I} \left(p_l(\mathbf{x}_U)+ p_l(\mathbf{y}_U) + \sum_{V, W \in I} N_{V, W}^U p_l(\mathbf{x}_V)p_l(\mathbf{y}_W)\right)U\right) \\
&=&
\prod_{l=1}^{\infty}\Theta_l \left(1 + \sum_{U} p_l\left(\mathbf{x}_U, \mathbf{y}_U,\bigoplus_{V, W \in I} (\mathbf{x}_V\mathbf{y}_W)^{\oplus N_{V, W}^U}\right)U\right)  \\
&=&
\sum_{\vv{\lambda} \in \mathcal{P}^I} \left( \prod_{U \in I} s_{\vv{\lambda}(U)}\left(\mathbf{x}_U, \mathbf{y}_U, \bigoplus_{V, W \in I} (\mathbf{x}_V\mathbf{y}_W)^{\oplus N_{V,W}^U}\right) \right) \otimes Z_{\vv{\lambda}}
\end{eqnarray*}
\end{proof}
\noindent
By comparing coefficients of Schur functions on each side of the previous theorem, we obtain the following corollary.
\begin{corollary} \label{multiplication_duality}
We have 
\[
Z_{\vv{\mu}}Z_{\vv{\nu}} = \sum_{\vv{\lambda}} a_{\vv{\mu},\vv{\nu}}^{\vv{\lambda}}Z_{\vv{\lambda}},
\]
where $a_{\vv{\mu},\vv{\nu}}^{\vv{\lambda}}$ is the coefficient of
\[
\prod_{V \in I} s_{\vv{\mu}(V)}(\mathbf{x}_V) \times \prod_{W \in I} s_{\vv{\nu}(W)}(\mathbf{y}_W)
\]
in
\[
\prod_{U \in I} s_{\vv{\lambda}(U)}\left(\mathbf{x}_U, \mathbf{y}_U, \bigoplus_{V, W \in I} (\mathbf{x}_V\mathbf{y}_W)^{\oplus N_{V,W}^U}\right).
\]
\end{corollary}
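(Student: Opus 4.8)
The plan is to obtain the corollary by reading off the coefficient of a Schur-function product from the generating-function identity of Theorem \ref{mult_gen_fn}, exactly as the comparison of ``coefficients of Schur functions'' suggests. First I would expand the left-hand side of that identity by distributing the product of the two sums; since $s_{\vv{\mu}(V)}(\mathbf{x}_V)$ involves only the variable set $\mathbf{x}_V$, $s_{\vv{\nu}(W)}(\mathbf{y}_W)$ involves only $\mathbf{y}_W$, and the $Z_{\vv{\mu}}, Z_{\vv{\nu}}$ lie in the trivially graded right-hand tensor factor, this produces
\[
\sum_{\vv{\mu}, \vv{\nu} \in \mathcal{P}^I} \left( \prod_{V \in I} s_{\vv{\mu}(V)}(\mathbf{x}_V) \right)\left( \prod_{W \in I} s_{\vv{\nu}(W)}(\mathbf{y}_W) \right) \otimes Z_{\vv{\mu}} Z_{\vv{\nu}}.
\]
The structural input is that, since the Schur functions are a $\mathbb{Z}$-basis of $\Lambda$, the products $\left( \prod_{V} s_{\vv{\mu}(V)}(\mathbf{x}_V) \right)\left( \prod_{W} s_{\vv{\nu}(W)}(\mathbf{y}_W) \right)$, indexed by $(\vv{\mu}, \vv{\nu}) \in \mathcal{P}^I \times \mathcal{P}^I$, form a $\mathbb{Z}$-basis of the ring of symmetric functions in the two families of variable sets $\{\mathbf{x}_V\}_{V \in I}$ and $\{\mathbf{y}_W\}_{W \in I}$; and each element $\prod_{U \in I} s_{\vv{\lambda}(U)}\left(\mathbf{x}_U, \mathbf{y}_U, \bigoplus_{V,W}(\mathbf{x}_V\mathbf{y}_W)^{\oplus N_{V,W}^U}\right)$ on the right-hand side of Theorem \ref{mult_gen_fn} lies in that ring (it is symmetric in each set of $x$-variables and each set of $y$-variables separately), so extracting its coefficient on a Schur-function product, which by definition is $a_{\vv{\mu},\vv{\nu}}^{\vv{\lambda}}$, is meaningful.

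Next I would expand the right-hand side of Theorem \ref{mult_gen_fn} in this basis: by the definition of $a_{\vv{\mu},\vv{\nu}}^{\vv{\lambda}}$ and an interchange of the order of summation, it equals $\sum_{\vv{\mu}, \vv{\nu}} \left( \prod_V s_{\vv{\mu}(V)}(\mathbf{x}_V) \right)\left( \prod_W s_{\vv{\nu}(W)}(\mathbf{y}_W) \right) \otimes \sum_{\vv{\lambda}} a_{\vv{\mu},\vv{\nu}}^{\vv{\lambda}} Z_{\vv{\lambda}}$. Comparing the coefficients of the basis element indexed by a fixed $(\vv{\mu}, \vv{\nu})$ then gives precisely $Z_{\vv{\mu}} Z_{\vv{\nu}} = \sum_{\vv{\lambda}} a_{\vv{\mu},\vv{\nu}}^{\vv{\lambda}} Z_{\vv{\lambda}}$.

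The step that needs genuine care — and the main, though minor, obstacle — is to show that for fixed $\vv{\mu}, \vv{\nu}$ only finitely many $\vv{\lambda}$ have $a_{\vv{\mu},\vv{\nu}}^{\vv{\lambda}} \neq 0$, so that the interchange of summations above is legitimate and the identity holds termwise in the completed tensor product. For this I would grade the symmetric functions by $x$-degree and by $y$-degree separately, using that
\[
p_l\left(\mathbf{x}_U, \mathbf{y}_U, \bigoplus_{V,W}(\mathbf{x}_V\mathbf{y}_W)^{\oplus N_{V,W}^U}\right) = p_l(\mathbf{x}_U) + p_l(\mathbf{y}_U) + \sum_{V, W \in I} N_{V,W}^U \, p_l(\mathbf{x}_V)p_l(\mathbf{y}_W),
\]
each summand of which has $x$-degree plus $y$-degree equal to $l$ or to $2l$, hence at least $l$ (the negative multiplicities occurring when $N_{V,W}^U < 0$ do not affect this). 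Since $s_{\vv{\lambda}(U)}$ is a rational combination of products of power sums of total degree $|\vv{\lambda}(U)|$, every monomial of $\prod_{U \in I} s_{\vv{\lambda}(U)}(\cdots)$ has $x$-degree plus $y$-degree at least $|\vv{\lambda}|$; since the target basis element is homogeneous of $x$-degree $|\vv{\mu}|$ and $y$-degree $|\vv{\nu}|$, we conclude $a_{\vv{\mu},\vv{\nu}}^{\vv{\lambda}} = 0$ unless $|\vv{\lambda}| \le |\vv{\mu}| + |\vv{\nu}|$, which holds for only finitely many $\vv{\lambda}$. With this finiteness established, the coefficient comparison is rigorous and the corollary follows; all remaining manipulations are purely formal.
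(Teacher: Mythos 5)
Your proof is correct and follows the same route as the paper, which obtains the corollary simply ``by comparing coefficients of Schur functions on each side'' of Theorem \ref{mult_gen_fn}. The only difference is that you supply the routine justifications (the basis property of the Schur-function products and the finiteness bound $|\vv{\lambda}| \le |\vv{\mu}| + |\vv{\nu}|$) that the paper leaves implicit; these are accurate and harmless additions.
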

\begin{theorem}
The multiplication induced from $\mathcal{G}_\infty^{\mathbb{Q}}(R)$ makes $\mathcal{G}_\infty^{\mathbb{Z}}(R)$ into a ring.
\end{theorem}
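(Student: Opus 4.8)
The plan is to deduce everything from Corollary~\ref{multiplication_duality}: since $\mathcal{G}_\infty^{\mathbb{Q}}(R)$ is already an associative algebra with $\mathbb{Q}$-basis $\{Z_{\vv{\lambda}}\}$, the product $Z_{\vv{\mu}}Z_{\vv{\nu}}$ is automatically a \emph{finite} $\mathbb{Q}$-linear combination $\sum_{\vv{\lambda}} a_{\vv{\mu},\vv{\nu}}^{\vv{\lambda}}Z_{\vv{\lambda}}$, and associativity and distributivity are inherited. (If one wants finiteness directly, a bidegree count in the alphabets $\bigcup_V \mathbf{x}_V$ and $\bigcup_W \mathbf{y}_W$ on the right-hand side of Theorem~\ref{mult_gen_fn} shows $a_{\vv{\mu},\vv{\nu}}^{\vv{\lambda}}=0$ unless $|\vv{\lambda}|\le|\vv{\mu}|+|\vv{\nu}|$, because each variable in an alphabet $\mathbf{x}_V\mathbf{y}_W$ contributes to both bidegrees.) Thus it remains to check two things: that $1 \in \mathcal{G}_\infty^{\mathbb{Z}}(R)$, and that every structure constant $a_{\vv{\mu},\vv{\nu}}^{\vv{\lambda}}$ is an integer. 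Unitality is immediate: setting every $\vv{\lambda}(U)$ equal to the empty partition in Definition~\ref{general_Z_def} extracts the constant term of $\prod_{l}\Theta_l(1+\sum_U p_l^{(U)}U)$, which is $\prod_l \Theta_l(1)=\prod_l\exp(T_l(\log 1))=1$; hence $Z_{\vv{\lambda}}=1$ for $\vv{\lambda}$ the empty multipartition.

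For integrality I would read off $a_{\vv{\mu},\vv{\nu}}^{\vv{\lambda}}$ from Theorem~\ref{mult_gen_fn} as the coefficient of $\prod_{V\in I} s_{\vv{\mu}(V)}(\mathbf{x}_V)\cdot\prod_{W\in I} s_{\vv{\nu}(W)}(\mathbf{y}_W)$ in
\[
\prod_{U\in I} s_{\vv{\lambda}(U)}\Bigl(\mathbf{x}_U,\ \mathbf{y}_U,\ \bigoplus_{V,W\in I}(\mathbf{x}_V\mathbf{y}_W)^{\oplus N_{V,W}^U}\Bigr),
\]
and show this symmetric function, expanded in products of Schur functions of the individual alphabets $\mathbf{x}_V$ and $\mathbf{y}_W$, has integer coefficients. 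This rests on three integrality facts recalled in Section~2: first, $s_\kappa(\mathbf{a}\oplus\mathbf{b})=\sum_{\alpha,\beta} c_{\alpha,\beta}^\kappa s_\alpha(\mathbf{a})s_\beta(\mathbf{b})$ with integer Littlewood--Richardson coefficients, which by iteration covers direct sums with arbitrary positive multiplicities and also re-expands the products $s_\alpha s_\beta$ arising after collecting terms in a common alphabet; second, $s_\kappa(\mathbf{a}\mathbf{b})=\sum_{\alpha,\beta} k_{\alpha,\beta}^\kappa s_\alpha(\mathbf{a})s_\beta(\mathbf{b})$ with integer Kronecker coefficients, which resolves each product alphabet $\mathbf{x}_V\mathbf{y}_W$; and third, for a negative multiplicity, $s_\kappa$ evaluated under $p_n\mapsto -p_n(\mathbf{a})$ equals $(-1)^{|\kappa|}s_{\kappa'}(\mathbf{a})$ (this is $\omega$ up to the global sign $(-1)^{|\kappa|}$, cf.\ Example~23 of Section~1.3 of \cite{Macdonald}), so $s_\kappa$ of a difference of alphabets again has integer coefficients. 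Expanding $s_{\vv{\lambda}(U)}$ at the direct sum by repeated use of the first fact, resolving the product factors by the second and the negative factors by the third, and finally re-collecting products of Schur functions in each common alphabet by the first, yields an integer combination of the monomials $\prod_V s_{\vv{\mu}(V)}(\mathbf{x}_V)\prod_W s_{\vv{\nu}(W)}(\mathbf{y}_W)$; these monomials are $\mathbb{Z}$-linearly independent because the Schur functions form a $\mathbb{Z}$-basis of each copy of $\Lambda$, so the coefficient $a_{\vv{\mu},\vv{\nu}}^{\vv{\lambda}}$ is an integer, and $Z_{\vv{\mu}}Z_{\vv{\nu}}\in\mathcal{G}_\infty^{\mathbb{Z}}(R)$.

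The one genuinely delicate point is the third fact: because $R$ is an arbitrary ring free over $\mathbb{Z}$, the structure tensor entries $N_{V,W}^U$ can be negative, so the plethystic alphabets $(\mathbf{x}_V\mathbf{y}_W)^{\oplus N_{V,W}^U}$ really do occur with negative multiplicities, and one must confirm that the ``difference of alphabets'' convention is exactly the one used in the basis-independence argument (via $\theta$) and that it preserves integrality --- which it does, via the $\omega$-description above. Everything else is bookkeeping within the standard integral structure of $\Lambda$, so this is where I expect the only real care to be needed.
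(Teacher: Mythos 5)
Your proposal is correct and follows essentially the same route as the paper: the paper's proof is a two-line appeal to Corollary~\ref{multiplication_duality} together with the observation that $Z_{\vv{\lambda}}=1$ for the empty multipartition. You have merely made explicit the integrality of the structure constants $a_{\vv{\mu},\vv{\nu}}^{\vv{\lambda}}$ (via Littlewood--Richardson and Kronecker coefficients and the $\omega$-twist for negative multiplicities $N_{V,W}^U$), which the paper leaves implicit.
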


\begin{proof}
If $\vv{\lambda}$ is the empty multipartition, then $Z_{\vv{\lambda}}$ is the identity. Thus, it suffices to show that $\mathcal{G}_\infty^{\mathbb{Z}}(R)$ is closed under multiplication, which follows from Corollary \ref{multiplication_duality}
\end{proof}

\begin{remark}
We have shown that the ring $\mathcal{G}_\infty^{\mathbb{Z}}(R)$ is an integral form of $\mathcal{G}_\infty^{\mathbb{Q}}(R)$.
\end{remark}


\begin{definition}
Let $e_r(U) = Z_{\vv{\lambda}}$, where $\vv{\lambda}(U) = (1^r)$ for a single $U \in I$ and $\vv{\lambda}(U)$ is the empty partition for all other $U$. For fixed $U$, we write $E_U(t) = \sum_{r \geq 0} e_r(U)t^r$.
\end{definition}
\noindent
The following proposition shows that $e_r(U)$ does not depend on the choice of basis $I$.
\begin{proposition}
Fix $U \in I$. We have the following equality:
\[
E_U(t) = \prod_{i \geq 1}\Theta_i (1 - (-t)^i U).
\]
\end{proposition}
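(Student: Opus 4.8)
The plan is to read $E_U(t)$ off the generating-function definition of the $Z_{\vv{\lambda}}$ by applying two linear maps to the symmetric-function side. Recall from the Remark following Definition \ref{general_Z_def} that
\[
\sum_{\vv{\lambda} \in \mathcal{P}^I} \Bigl( \prod_{V \in I} s_{\vv{\lambda}(V)}^{(V)} \Bigr) \otimes Z_{\vv{\lambda}} = \prod_{l \geq 1} \Theta_l\Bigl(1 + \sum_{V \in I} p_l^{(V)} V\Bigr),
\]
and that $e_r(U) = Z_{\vv{\lambda}}$ for the multipartition $\vv{\lambda}$ supported only at $U$, with value $(1^r)$ there. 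So I want to isolate the contribution of those $\vv{\lambda}$ supported at $U$ having a single-column shape.

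First I would apply the continuous algebra homomorphism $\Phi = \mathrm{id}_{\Lambda^{(U)}} \otimes \bigotimes_{V \neq U} \epsilon_V$, where $\epsilon_V : \Lambda^{(V)} \to \mathbb{Q}$ is the augmentation $p_l^{(V)} \mapsto 0$ (equivalently $s_\mu^{(V)} \mapsto \delta_{\mu, \emptyset}$), extended by the identity on $R$ and on $\mathcal{G}_\infty^{\mathbb{Q}}(R)$. Since $\Phi$ acts only on the symmetric-function variables, it commutes with $\log$, with $T_l$ (which is $\bigotimes_V \Lambda^{(V)}$-linear), and with $\exp$, hence with each $\Theta_l$ and with the infinite product. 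The identity above therefore becomes
\[
\sum_{\mu \in \mathcal{P}} s_\mu(\mathbf{x}_U) \otimes Z_{\vv{\lambda}_\mu} = \prod_{l \geq 1} \Theta_l\bigl(1 + p_l(\mathbf{x}_U)\,U\bigr),
\]
where $\vv{\lambda}_\mu$ denotes the multipartition supported at $U$ with value $\mu$, so that $Z_{\vv{\lambda}_{(1^r)}} = e_r(U)$.

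Next I would pair both sides, in the variable set $\mathbf{x}_U$, with $\sum_{r \geq 0} e_r(\mathbf{x}_U) t^r$. On the left, $\langle s_\mu, e_r \rangle = \langle s_\mu, s_{(1^r)} \rangle = \delta_{\mu, (1^r)}$, so the left side collapses to $\sum_{r \geq 0} t^r e_r(U) = E_U(t)$. On the right I would use $\sum_r e_r t^r = \exp\bigl(\sum_{l \geq 1} \tfrac{(-1)^{l-1}}{l} t^l p_l\bigr)$ (a consequence of $E'(t)/E(t) = P(-t)$) together with the elementary fact that, for scalars $a_l$, the functional $f \mapsto \langle f, \exp(\sum_l \tfrac{a_l}{l} p_l) \rangle$ is exactly the algebra homomorphism $\Lambda_{\mathbb{Q}} \to (\text{scalars})$ sending $p_l \mapsto a_l$; this is verified on the basis $p_\nu$ using $\langle p_\mu, p_\nu \rangle = z_\mu \delta_{\mu \nu}$ and $\exp(\sum_l \tfrac{a_l}{l} p_l) = \sum_\nu \tfrac{a_\nu}{z_\nu} p_\nu$. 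Applying this with $a_l = (-1)^{l-1} t^l$ substitutes $p_l(\mathbf{x}_U) \mapsto (-1)^{l-1} t^l$; since the $l$-th factor of $\prod_l \Theta_l(1 + p_l(\mathbf{x}_U) U)$ involves only $p_l(\mathbf{x}_U)$, the product becomes $\prod_{l \geq 1} \Theta_l\bigl(1 + (-1)^{l-1} t^l U\bigr) = \prod_{l \geq 1} \Theta_l\bigl(1 - (-t)^l U\bigr)$, which is the claimed formula.

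The only real work is the bookkeeping needed to make these manipulations rigorous: one extends the Hall pairing to the completed tensor product (legitimate because $\Theta_l(1 + p_l^{(U)} U) = 1 + (\text{terms of degree} \geq l)$, so each homogeneous component is a finite sum), and one checks that substitution into the infinite product $\prod_l \Theta_l(1 + p_l^{(U)} U)$ may be performed factor by factor (legitimate because distinct factors involve distinct power sums and the product converges degree by degree). I expect confirming that $\Phi$ and the functional $\langle -, \exp(\cdots) \rangle$ genuinely commute past the operations $\Theta_l$ to be the most delicate point, but once it is in place the computation is forced. Finally, since the resulting formula involves only $U \in R$ and the basis-independent operations $\Theta_l$, this argument simultaneously shows that $e_r(U)$, and hence $E_U(t)$, does not depend on the chosen basis $I$.
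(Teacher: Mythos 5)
Your proof is correct and, after unwinding, follows essentially the same path as the paper's: both start from the master generating function $\sum_{\vv{\lambda}} \bigl( \prod_V s_{\vv{\lambda}(V)}^{(V)} \bigr) \otimes Z_{\vv{\lambda}} = \prod_l \Theta_l \bigl( 1 + \sum_V p_l^{(V)} V \bigr)$, kill the variable sets $\mathbf{x}_V$ for $V \neq U$, and then apply to $\mathbf{x}_U$ the specialization $p_l \mapsto (-1)^{l-1}t^l$. The only cosmetic difference is how that last specialization is packaged: the paper composes $\omega$ with evaluation at the single variable $(t,0,0,\ldots)$, while you realize it as the Hall pairing with $\sum_r e_r(\mathbf{x}_U)t^r = \exp\bigl(\sum_l \tfrac{(-1)^{l-1}}{l}t^l p_l\bigr)$; these are the same linear functional, and your verification on the $p_\mu$ basis matches the paper's observation that $s_{\lambda'}(t,0,\ldots)$ vanishes unless $\lambda' = (n)$.
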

\begin{proof}
Consider the generating function defining the $Z_{\vv{\lambda}}$:
\[
\sum_{\vv{\lambda} \in \mathcal{P}^I} \left( \prod_{U \in I} s_{\vv{\lambda}(U)}^{(U)} \right) \otimes Z_{\vv{\lambda}} =
\prod_{l=1}^{\infty}\Theta_l \left(1 + \sum_{U \in I} p_l^{(U)}U\right).
\]
For all $V \in I$ different from $U$, we set the variable set $\mathbf{x}_V$ to zero. This has the effect of sending any term with $|\vv{\lambda}(V)| > 0$ to zero. If we write $Z_{\lambda, U}$ for $Z_{\vv{\mu}}$ where $\vv{\mu}(U) = \lambda$ and $\vv{\mu}(V)$ is the empty partition for $V \neq U$, this gives
\[
\sum_{\lambda \in \mathcal{P}} s_{\lambda}^{(U)} \otimes Z_{\lambda, U} =
\prod_{l=1}^{\infty}\Theta_l \left(1 + p_l^{(U)}U\right).
\]
We apply the involution $\omega$ in the symmetric function variables $\mathbf{x}_U$. Recall that $\omega(s_\lambda) = s_{\lambda^\prime}$ and $\omega(p_l) = (-1)^{l-1}p_l$. Hence,
\[
\sum_{\lambda \in \mathcal{P}} s_{\lambda^\prime}^{(U)} \otimes Z_{\lambda, U} =
\prod_{l=1}^{\infty}\Theta_l \left(1 + (-1)^{l-1}p_l^{(U)}U\right).
\]
We evaluate the set of variables $\mathbf{x}_U = (t,0,0,\ldots)$. Note that $s_{\lambda^\prime}(t,0,0,\ldots)$ is equal to $t^n$ if $\lambda^\prime = (n)$, and zero if $\lambda^\prime$ has more than one part, while $p_l(t,0,0,\ldots) = t^l$. Now,
\[
\sum_{r \geq 0} e_r(U)t^r =
\prod_{l=1}^{\infty}\Theta_l \left(1 - (-t)^l U\right).
\]
\end{proof}


\begin{definition}\label{E_function}
For arbitrary $U \in R$ (not necessarily an element of $I$), we define $e_r(U)$ and $E_U(t)$ via the preceding series:
\[
E_U(t) = \sum_{r \geq 0} e_r(U) t^r = \prod_{l \geq 1}\Theta_l (1 - (-t)^l U).
\]
\end{definition}
\begin{proposition}
For $r \in \mathbb{Z}_{>0}$, and $U \in I$, the $e_r(U)$ generate $\mathcal{G}_\infty^{\mathbb{Z}}(R)$.
\end{proposition}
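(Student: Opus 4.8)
The plan is to pass to the associated graded ring and use that the ring of symmetric functions is generated over $\mathbb{Z}$ by the elementary symmetric functions. Restrict the filtration from $\mathcal{G}_\infty^{\mathbb{Q}}(R)$ to $\mathcal{G}_\infty^{\mathbb{Z}}(R)$; since by Proposition \ref{Z_graded_image} the image of $Z_{\vv{\lambda}}$ in $\mathrm{gr}\,\mathcal{G}_\infty^{\mathbb{Q}}(R)$ is $\prod_{U} s_{\vv{\lambda}(U)}^{(U)}$, a nonzero homogeneous element of degree $|\vv{\lambda}|$, we have $F_n\mathcal{G}_\infty^{\mathbb{Z}}(R) = \bigoplus_{|\vv{\lambda}|\le n}\mathbb{Z}Z_{\vv{\lambda}}$, an exhaustive filtration with $F_{-1}=0$. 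Combining Proposition \ref{Z_graded_image} with the multiplication formula of Corollary \ref{multiplication_duality}, whose leading term I would extract as explained below, we get a ring isomorphism $\mathrm{gr}\,\mathcal{G}_\infty^{\mathbb{Z}}(R)\cong\bigotimes_{U\in I}\Lambda^{(U)}$ carrying $\mathrm{gr}\,Z_{\vv{\lambda}}$ to $\prod_{U}s_{\vv{\lambda}(U)}^{(U)}$; in particular $\mathrm{gr}\,e_r(U)$ corresponds to $s_{(1^r)}^{(U)}=e_r^{(U)}$.

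Now let $A\subseteq\mathcal{G}_\infty^{\mathbb{Z}}(R)$ be the unital subring generated by all $e_r(U)$, $r>0$, $U\in I$, equipped with the induced filtration $F_nA=A\cap F_n\mathcal{G}_\infty^{\mathbb{Z}}(R)$. Then $\mathrm{gr}\,A$ includes into $\mathrm{gr}\,\mathcal{G}_\infty^{\mathbb{Z}}(R)$ as a subring containing every $\mathrm{gr}\,e_r(U)=e_r^{(U)}$. Since each $\Lambda^{(U)}=\mathbb{Z}[e_1^{(U)},e_2^{(U)},\dots]$, the subring of $\bigotimes_U\Lambda^{(U)}$ generated by all the $e_r^{(U)}$ is everything (this is exactly why the integral generators are the elementary-symmetric-type classes $e_r(U)$, whereas the $T_r(U)$ with $r>1$, which go to $p_r^{(U)}/r$ in the associated graded, generate only rationally). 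Hence $\mathrm{gr}\,A=\mathrm{gr}\,\mathcal{G}_\infty^{\mathbb{Z}}(R)$. The standard lifting argument then finishes: because the filtration is exhaustive and bounded below, $\mathrm{gr}\,A=\mathrm{gr}\,\mathcal{G}_\infty^{\mathbb{Z}}(R)$ forces $F_nA=F_n\mathcal{G}_\infty^{\mathbb{Z}}(R)$ for all $n$ by induction on $n$ (trivial for $n<0$; the inductive step follows by comparing the short exact sequences $0\to F_{n-1}\to F_n\to\mathrm{gr}_n\to 0$ for $A$ and for $\mathcal{G}_\infty^{\mathbb{Z}}(R)$), so $A=\mathcal{G}_\infty^{\mathbb{Z}}(R)$.

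The one step requiring care is extracting the leading term of Corollary \ref{multiplication_duality}. Grading all variables $x_{V,i},y_{W,j}$ by $1$, the factor $\prod_V s_{\vv{\mu}(V)}(\mathbf{x}_V)\prod_W s_{\vv{\nu}(W)}(\mathbf{y}_W)$ is homogeneous of degree $|\vv{\mu}|+|\vv{\nu}|$, while in $\prod_U s_{\vv{\lambda}(U)}(\mathbf{x}_U,\mathbf{y}_U,\bigoplus_{V,W}(\mathbf{x}_V\mathbf{y}_W)^{\oplus N_{V,W}^U})$ each auxiliary variable $x_{V,i}y_{W,j}$ carries degree $2$, so — expanding $s_{\vv{\lambda}(U)}$ along the partition of the variable set and using that $s_\beta$ of a degree-$2$ (possibly signed) alphabet is homogeneous of degree $2|\beta|$ — the whole product has minimal degree $|\vv{\lambda}|$ with degree-$|\vv{\lambda}|$ part exactly $\prod_U s_{\vv{\lambda}(U)}(\mathbf{x}_U,\mathbf{y}_U)$. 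Thus $a_{\vv{\mu},\vv{\nu}}^{\vv{\lambda}}=0$ unless $|\vv{\lambda}|\le|\vv{\mu}|+|\vv{\nu}|$, and when equality holds $a_{\vv{\mu},\vv{\nu}}^{\vv{\lambda}}=\prod_U c_{\vv{\mu}(U),\vv{\nu}(U)}^{\vv{\lambda}(U)}$, which is the product of Schur functions in $\bigotimes_U\Lambda^{(U)}$; this gives the claimed identification of $\mathrm{gr}\,\mathcal{G}_\infty^{\mathbb{Z}}(R)$. (If one prefers to avoid the abstract lifting lemma, the same ingredients give an explicit induction: by the dual Jacobi–Trudi identity $\prod_{U}\prod_i e_{\vv{\lambda}(U)'_i}(U)\in A$ equals $Z_{\vv{\lambda}}$ plus an integer combination of $Z_{\vv{\mu}}$ with either $|\vv{\mu}|<|\vv{\lambda}|$, or $|\vv{\mu}|=|\vv{\lambda}|$ and $\vv{\mu}(U)\vartriangleleft\vv{\lambda}(U)$ for some $U$, so one obtains every $Z_{\vv{\lambda}}\in A$ by induction on $|\vv{\lambda}|$ and, within each size, on the product of dominance orders.) I expect this degree bookkeeping to be the main obstacle; the rest is formal.
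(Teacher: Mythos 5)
Your proof is correct and follows exactly the paper's route: the paper's own argument is the two-line version ``the image of $e_r(U)$ in the associated graded algebra is $e_r^{(U)}$, and these generate $\bigotimes_{U\in I}\Lambda^{(U)}$ over $\mathbb{Z}$, hence the $e_r(U)$ generate $\mathcal{G}_\infty^{\mathbb{Z}}(R)$.'' The extra details you supply (the leading-term extraction from Corollary \ref{multiplication_duality} and the lifting along the exhaustive, bounded-below filtration) are all correct and are exactly what the paper leaves implicit.
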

\begin{proof}
Passing to the associated graded algebra, the image of $e_r(U)$ is $e_r^{(U)} \in \bigotimes_{U \in I} \Lambda^{(U)}$. Because the $e_r^{(U)}$ generate the associated graded algebra, it follows the $e_r(U)$ generate $\mathcal{G}_\infty^{\mathbb{Z}}(R)$.
\end{proof}
\noindent
We will concern ourselves with understanding the relations between the $e_r(U)$.

\section{The Commutation Relation for $E_U(t)$}

\begin{lemma} \label{commuting_lemma}
If $U$ and $V$ commute in $R$, then $e_i(U)$ and $e_j(V)$ commute in $\mathcal{G}_\infty^\mathbb{Z}(R)$ for any $i$ and $j$.
\end{lemma}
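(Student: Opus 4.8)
The plan is to prove the stronger assertion that the two generating functions $E_U(t)$ and $E_V(s)$ \emph{commute} as elements of $\mathcal{G}_\infty^{\mathbb{Q}}(R)[[t,s]]$, where $t$ and $s$ are \emph{independent} formal variables. Extracting the coefficient of $t^i s^j$ from $E_U(t)E_V(s) = E_V(s)E_U(t)$ then yields $e_i(U)e_j(V) = e_j(V)e_i(U)$, which is in particular an identity in the subring $\mathcal{G}_\infty^{\mathbb{Z}}(R)$. By Definition \ref{E_function}, $E_U(t) = \prod_{l \geq 1}\Theta_l(1-(-t)^lU)$ and $E_V(s) = \prod_{m\geq1}\Theta_m(1-(-s)^mV)$; since $\Theta_l(1-(-t)^lU) \in 1 + t^l\,\mathcal{G}_\infty^{\mathbb{Q}}(R)[[t]]$ and likewise for the $V$-factors, both products converge $(t,s)$-adically and, modulo any fixed total degree in $t$ and $s$, all but finitely many factors equal $1$. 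Hence it is enough to show that each factor $\Theta_l(1-(-t)^lU)$ commutes with each factor $\Theta_m(1-(-s)^mV)$: then one may transpose the factors one at a time to rewrite $E_U(t)E_V(s)$ as $E_V(s)E_U(t)$.

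For $l \neq m$ this commutation is automatic. Indeed $\Theta_l(1-(-t)^lU) = \exp(T_l(\log(1-(-t)^lU)))$ involves only $T_l$, hence lies in the subalgebra $U(\mathbb{Q}\otimes R_l)[[t]]$, while $\Theta_m(1-(-s)^mV)$ lies in $U(\mathbb{Q}\otimes R_m)[[s]]$; and the tensor factors $U(\mathbb{Q}\otimes R_l)$ and $U(\mathbb{Q}\otimes R_m)$ of $\mathcal{G}_\infty^{\mathbb{Q}}(R) = \bigotimes_{i\geq1}U(\mathbb{Q}\otimes R_i)$ commute with one another.

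The case $l = m$ is where the hypothesis $UV = VU$ enters, and it will follow from the same reasoning as in the proof that $\Theta_l(x)\Theta_l(y) = \Theta_l(xy)$. Since $U$ and $V$ commute in $R$, so do all powers $U^a$ and $V^b$, and therefore the elements $\log(1-(-t)^lU) = -\sum_{k\geq1}\frac{(-1)^{lk}}{k}t^{lk}U^k$ and $\log(1-(-s)^lV) = -\sum_{k\geq1}\frac{(-1)^{lk}}{k}s^{lk}V^k$ commute in $(\mathbb{Q}\otimes R)[[t,s]]$. As $T_l$ is a Lie algebra homomorphism (extended $\mathbb{Q}[[t,s]]$-linearly, exactly as in that proof), we get $[T_l(\log(1-(-t)^lU)),\,T_l(\log(1-(-s)^lV))] = T_l([\log(1-(-t)^lU),\,\log(1-(-s)^lV)]) = 0$, so the exponentials $\Theta_l(1-(-t)^lU)$ and $\Theta_l(1-(-s)^lV)$ commute. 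Combining the two cases gives $E_U(t)E_V(s) = E_V(s)E_U(t)$, and comparing coefficients will finish the proof.

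I expect the only real care needed is in justifying the rearrangement of the infinite products and the $\mathbb{Q}[[t,s]]$-linear extension of $T_l$; the conceptual core — that commuting elements of $R$ give commuting $\Theta_l$'s, while distinct $T_l$'s commute automatically — is an immediate consequence of the Lie-homomorphism property of $T_l$ established earlier.
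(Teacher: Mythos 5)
Your proof is correct and is essentially the same as the paper's. The paper's proof also reduces to factor-wise commutation of the $\Theta_l$, and for $l=m$ it invokes the already-proved multiplicativity $\Theta_l(x)\Theta_l(y) = \Theta_l(xy)$ together with the commutativity of $1-(-t)^lU$ and $1-(-s)^lV$ (which uses $UV=VU$), giving $\Theta_l(x)\Theta_l(y) = \Theta_l(xy) = \Theta_l(yx) = \Theta_l(y)\Theta_l(x)$; you instead rederive the same fact directly by noting that the logarithms commute and applying $T_l$ as a Lie algebra homomorphism. These are the same argument --- the multiplicativity lemma is itself proved via the BCH/Lie-homomorphism property --- so the two write-ups differ only in whether the $l=m$ case is delegated to the earlier lemma or unpacked inline, and in your making the ($l\neq m$, tensor-factor) case and the $(t,s)$-adic convergence explicit where the paper leaves them implicit.
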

\begin{proof}
It suffices to check that the generating functions $E_U(t)$ and $E_V(s)$ commute:
\begin{eqnarray*}
E_U(t)E_V(s) &=& \prod_l \Theta_l(1 - (-t)^l U) \Theta_l(1 - (-s)^l V) \\
&=& \prod_l \Theta_l \left( (1 - (-t)^l U)(1 - (-s)^l V) \right) \\
&=& \prod_l \Theta_l \left( (1 - (-s)^l V)(1 - (-t)^l U) \right) \\
&=& \prod_l \Theta_l(1 - (-s)^l V) \Theta_l(1 - (-t)^l U) \\
&=& E_V(s)E_U(t).
\end{eqnarray*}
\end{proof}

\noindent
The following lemma follows immediately from the multiplicativity of $\Theta_l$.
\begin{lemma}\label{inverse}
The multiplicative inverse of $\Theta_l(1 + t U)$ is equal to $\Theta_l((1 + t U)^{-1})$.
\end{lemma}

\begin{proposition} \label{relation}
Let $X$ and $Y$ be non-commuting variables. Then, the sum of all monomials that alternate in $X$ and $Y$ can be written in the following two ways:
\[
(1+X)(1-YX)^{-1}(1+Y) = (1+Y)(1-XY)^{-1}(1+X)
\]
\end{proposition}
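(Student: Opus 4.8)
\noindent The plan is to expand both sides in the completed free algebra on the non-commuting variables $X$ and $Y$ and to recognise each as the formal sum of all \emph{alternating} monomials --- words in which no two consecutive letters agree --- each occurring with coefficient $1$; the stated equality then drops out. Throughout, $(1-YX)^{-1}$ and $(1-XY)^{-1}$ are interpreted as the geometric series $\sum_{k\geq 0}(YX)^k$ and $\sum_{k\geq 0}(XY)^k$, which make sense since $YX$ and $XY$ have no constant term.

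First I would analyse the left-hand side. The resolvent $(1-YX)^{-1}=\sum_{k\geq 0}(YX)^k$ is the empty word together with all alternating words that begin with $Y$ and end with $X$. Left multiplication by $1+X$ optionally prepends an $X$; since every term at hand is empty or ends in $X$, this makes $(1+X)(1-YX)^{-1}$ the sum of the empty word and of all alternating words ending in $X$, each once. Right multiplication by $1+Y$ optionally appends a $Y$, producing the empty word, all alternating words ending in $X$, and all alternating words ending in $Y$; as every nonempty alternating word ends in exactly one of the two letters, this is the sum over all alternating words. The right-hand side is obtained from the left by interchanging $X$ and $Y$, so the same reasoning identifies it with the same sum, and the two expressions are equal.

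The delicate point --- and the place I expect to need the most care --- is the bookkeeping of the short words (the empty word and the single letters $X$ and $Y$): one must check that the two optional prepend/append steps together cover each alternating word exactly once and never generate a word containing $XX$ or $YY$. To make this rigorous without a case analysis I would record the elementary identities $X(1-YX)^{-1}=(1-XY)^{-1}X$ and $(1-YX)^{-1}Y=Y(1-XY)^{-1}$, immediate from $X(YX)^k=(XY)^kX$ and $(YX)^kY=Y(XY)^k$, together with $(1-XY)^{-1}XY=(1-XY)^{-1}-1$ and $(1-YX)^{-1}=1+Y(1-XY)^{-1}X$; sliding the outer factors of $(1+X)(1-YX)^{-1}(1+Y)$ through the resolvent using these relations converts it term by term into $(1+Y)(1-XY)^{-1}(1+X)$, which I would present as the algebraic verification underpinning the combinatorial description.
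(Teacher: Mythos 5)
Your proof is correct, but it takes a genuinely different route from the paper's. The paper substitutes $A=1+X$, $B=1+Y$, rewrites the claim as $A(A+B-BA)^{-1}B=B(A+B-AB)^{-1}A$, and then simply inverts both sides: each inverse expands to $A^{-1}+B^{-1}-1$, which is manifestly symmetric in $A$ and $B$ (the paper writes $A^{-1}+B^{-1}+1$, a sign typo that does not affect the argument). You instead expand both sides as formal series in the completed free algebra and identify each with the sum of all alternating words, using the prepend/append bookkeeping plus the sliding identities $X(1-YX)^{-1}=(1-XY)^{-1}X$ and $(1-YX)^{-1}Y=Y(1-XY)^{-1}$ as the rigorous backbone; your word-counting is sound (every alternating word ending in $X$ arises exactly once from $(1+X)\sum_k(YX)^k$, and appending an optional $Y$ then enumerates all alternating words exactly once, with the right-hand side obtained by the $X\leftrightarrow Y$ swap, under which the set of alternating words is stable). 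The trade-off: the paper's two-line inversion trick is shorter, but it only proves the equality of the two expressions, whereas your argument also verifies the combinatorial description ("the sum of all monomials that alternate in $X$ and $Y$") that the proposition actually asserts and which the paper's proof leaves implicit. Both arguments are valid in the same setting, since $1+X$, $1+Y$, $1-XY$, and $1-YX$ are all invertible there.
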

\begin{proof}
Let $A = (1+X)$ and $B = (1+Y)$. Then we must show that 
\[
A(A+B-BA)^{-1}B = B(A+B-AB)^{-1}A.
\]
But the inverse of this equation is 
\[
B^{-1}(A+B-BA)A^{-1} = A^{-1}(A+B-AB)B^{-1},
\]
 and both sides equal $A^{-1}+B^{-1}+1$.
\end{proof}

\begin{theorem} \label{relation_1}
We have the following relation between generating functions (in $\mathcal{G}_\infty^\mathbb{Z}(R)[[u, v]]$):
\[
E_U(u)E_{VU}(-uv)^{-1}E_V(v) = E_V(v)E_{UV}(-uv)^{-1}E_U(u)
\]
\end{theorem}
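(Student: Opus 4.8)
The plan is to reduce Theorem \ref{relation_1} to the formal identity in Proposition \ref{relation} by exploiting the multiplicativity of the operators $\Theta_l$. The key observation is that all four generating functions appearing in the statement are, by Definition \ref{E_function}, infinite products $\prod_{l \geq 1} \Theta_l(\cdots)$ over the same index set $l \in \mathbb{Z}_{\geq 1}$, so it suffices to establish the identity factor-by-factor in $l$. Concretely, I would first expand each side using
\[
E_U(u) = \prod_{l \geq 1} \Theta_l(1 - (-u)^l U), \qquad E_V(v) = \prod_{l \geq 1} \Theta_l(1 - (-v)^l V),
\]
and note that for the mixed terms, $E_{VU}(-uv)^{-1} = \prod_{l \geq 1} \Theta_l\bigl((1 - (-(-uv))^l VU)^{-1}\bigr)$ by Lemma \ref{inverse}, and similarly for $E_{UV}(-uv)^{-1}$. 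Here $(-(-uv))^l = (uv)^l$, so the argument is $(1 - (uv)^l VU)^{-1}$.

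Next I would use the multiplicativity $\Theta_l(x)\Theta_l(y) = \Theta_l(xy)$ to collapse each side into a single product over $l$ of a single $\Theta_l$ applied to a product of three terms. The subtlety is that the three factors inside a given $\Theta_l$ need not commute (since $U$ and $V$ need not commute in $R$), so I must be careful that the multiplicativity identity still applies — but Proposition 4.x (the $\Theta_l(x)\Theta_l(y) = \Theta_l(xy)$ statement, proved via BCH) holds for arbitrary series with lowest-order term $1$, with no commutativity hypothesis, so this is fine. After collapsing, the left side becomes $\prod_{l \geq 1} \Theta_l\bigl((1 - (-u)^l U)(1 - (uv)^l VU)^{-1}(1 - (-v)^l V)\bigr)$ and the right side becomes $\prod_{l \geq 1} \Theta_l\bigl((1 - (-v)^l V)(1 - (uv)^l UV)^{-1}(1 - (-u)^l U)\bigr)$.

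It now suffices to prove, for each fixed $l$, the equality of the two arguments inside $\Theta_l$. Setting $X = -(-u)^l U = -(-1)^l u^l U$ and $Y = -(-v)^l V = -(-1)^l v^l V$, one computes $YX = (-1)^{2l} u^l v^l VU = (uv)^l VU$ and $XY = (uv)^l UV$. Hence the left argument is exactly $(1+X)(1-YX)^{-1}(1+Y)$ and the right argument is $(1+Y)(1-XY)^{-1}(1+X)$, and these are equal by Proposition \ref{relation} (applied with the non-commuting variables $X, Y$, which live in the completed tensor product and have no constant term, so all the inverses make sense as formal series). Therefore the two arguments of $\Theta_l$ coincide for every $l$, the products agree term by term, and the theorem follows.

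The main obstacle I anticipate is purely bookkeeping: tracking the sign conventions so that the substitution $t \mapsto -uv$ in $E_{VU}$ and $E_{UV}$ produces precisely $YX = (uv)^l VU$ and $XY = (uv)^l UV$ with matching signs, and confirming that the "$-uv$" in the statement is what makes the cross term $1 - YX$ rather than $1 + YX$. Once the substitution $X = -(-u)^l U$, $Y = -(-v)^l V$ is pinned down, everything else is a direct appeal to the already-established Propositions \ref{relation} and the multiplicativity of $\Theta_l$, together with Lemma \ref{inverse}. No genuinely new computation is required beyond verifying these sign matches.
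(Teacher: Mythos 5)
Your proposal is correct and follows essentially the same route as the paper's proof: expand each $E$-series as $\prod_l \Theta_l(\cdots)$, use Lemma \ref{inverse} and the multiplicativity of $\Theta_l$ to collapse each side into a single $\Theta_l$ per $l$, and apply Proposition \ref{relation} with $X = -(-u)^l U$, $Y = -(-v)^l V$; your sign bookkeeping ($YX = (uv)^l VU$, matching the substitution $t = -uv$) is exactly right. The only point you leave implicit, which the paper states, is that $\Theta_{l_1}$ and $\Theta_{l_2}$ commute for $l_1 \neq l_2$, which is what allows the interleaved products over $l$ to be regrouped.
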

\begin{proof}
Recall that $E_U(u) = \prod_{l \geq 1} \Theta_l(1 - (-u)^i U )$. We will work with terms corresponding to a fixed $l$, and then multiply these together to recover the result for the full generating functions. With Proposition \ref{relation} we obtain:
\begin{eqnarray*}
& &\Theta_l(1 -(-u)^l U) \Theta_l((1 - (-v)^l(-u)^l VU)^{-1}) \Theta_l(1 -(-v)^l V) \\
&=& \Theta_l((1 -(-u)^l U)(1 - (-v)^l(-u)^l VU)^{-1}(1 -(-v)^l V)))) \\
&=& \Theta_l((1 -(-v)^l V)(1 - (-u)^l(-v)^lUV)^{-1}(1 -(-u)^l U)) \\
&=&\Theta_l(1 -(-v)^l V) \Theta_l((1 - (-u)^l(-v)^l UV)^{-1}) \Theta_l(1 -(-u)^l U))) \\
\end{eqnarray*}
We may now take the product of these across $i \geq 1$. Because $\Theta_{l_1}(-)$ and $\Theta_{l_2}(-)$ commute whenever $l_1 \neq l_2$, it does not matter in what order we take the product. By taking the product over $l$, we may use Definition \ref{E_function} to write:
\[
E_U(u)E_{VU}(-vu)^{-1}E_V(v) = E_V(v)E_{UV}(-uv)^{-1}E_U(u)
\]
\end{proof}
\begin{definition}
For $W \in R$ and $n \geq 0$, let $h_n(W) \in \mathcal{G}_\infty^\mathbb{Z}(R)$ be given by determinant of the matrix $(e_{1 + j -i}(W))_{i,j=1}^{n}$ (where $e_r(W) = 0$ if $r < 0$).
\end{definition}
\noindent
Note that $h_n(W)$ is a polynomial in $e_1(W), e_2(W), \ldots , e_n(W)$ that lies in filtration degree $n$ of $\mathcal{G}_\infty^\mathbb{Z}(R)$.
\begin{lemma}
The series $\sum_{n \geq 0}h_n(W)t^n$ is the multiplicative inverse of $E_W(-t)$.
\end{lemma}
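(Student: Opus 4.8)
The plan is to reduce the identity to the classical relation $H(t)E(-t) = 1$ between the elementary and complete symmetric functions, transported along a ring homomorphism out of $\Lambda$.

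First I would note that the elements $e_0(W), e_1(W), e_2(W), \dots$ pairwise commute in $\mathcal{G}_\infty^{\mathbb{Z}}(R)$: this is Lemma \ref{commuting_lemma} applied with $U = V = W$, since $W$ commutes with itself in $R$ and $E_W(t)$ is defined for arbitrary $W \in R$ by Definition \ref{E_function}. Hence the $e_i(W)$ generate a commutative subring $A \subseteq \mathcal{G}_\infty^{\mathbb{Z}}(R)$, and since $\Lambda = \mathbb{Z}[e_1, e_2, \dots]$ is a free polynomial algebra on the elementary symmetric functions there is a unique ring homomorphism $\phi \colon \Lambda \to A$ with $\phi(e_i) = e_i(W)$ for all $i$ (so in particular $\phi(e_0) = 1 = e_0(W)$). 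Extend $\phi$ coefficientwise to $\phi \colon \Lambda[[t]] \to A[[t]]$.

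Next, by the dual Jacobi--Trudi (N\"agelsbach--Kostka) identity, the complete symmetric function $h_n$ satisfies $h_n = \det\big( (e_{1+j-i})_{i,j=1}^n \big)$ in $\Lambda$ (see \cite{Macdonald}). Applying the ring homomorphism $\phi$ to this determinant yields $\phi(h_n) = \det\big( (e_{1+j-i}(W))_{i,j=1}^n \big) = h_n(W)$, which is precisely the definition of $h_n(W)$. Therefore $\phi\big(H(t)\big) = \sum_{n \ge 0} h_n(W)\, t^n$ and $\phi\big(E(-t)\big) = \sum_{r \ge 0} e_r(W)(-t)^r = E_W(-t)$; applying $\phi$ to the identity $H(t)E(-t) = 1$ recalled in Section 2 then gives $\big(\sum_{n \ge 0} h_n(W)\, t^n\big)\, E_W(-t) = 1$ in $A[[t]]$. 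Since $A$ is commutative this also identifies $\sum_{n \ge 0} h_n(W)\, t^n$ as a two-sided multiplicative inverse of $E_W(-t)$, which is the assertion of the lemma.

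There is no serious obstacle here; the whole computation sits inside the commutative subring $A$, so the classical symmetric-function identities apply verbatim, and the only thing to verify is the (trivial) commutativity that lets us build $\phi$. If one prefers to avoid invoking the dual Jacobi--Trudi identity, one can instead observe directly that expanding $\det\big( (e_{1+j-i}(W))_{i,j=1}^n \big)$ along its last column shows the $h_n(W)$ satisfy the recursion $\sum_{k=0}^n (-1)^k e_k(W)\, h_{n-k}(W) = 0$ for $n \ge 1$ with $h_0(W) = 1$, which is exactly the recursion determining the coefficients of the formal inverse $E_W(-t)^{-1}$.
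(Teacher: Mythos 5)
Your proposal is correct and is essentially the paper's own argument: both specialise the classical identity $H(t)E(-t)=1$ along the ring homomorphism $\Lambda \to \mathcal{G}_\infty^{\mathbb{Z}}(R)$ sending $e_i \mapsto e_i(W)$ (justified by Lemma \ref{commuting_lemma}) and identify $h_n(W)$ via the Jacobi--Trudi determinant. Your write-up is merely a more explicit version of the same specialisation argument.
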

\begin{proof}
Recall that in the theory of symmetric functions, we have the relation $H(t)E(-t) = 1$, where $H(t)$ is the generating function of complete symmetric functions, and $E(t)$ is the generating function of elementary symmetric functions. Since the elementary symmetric functions are algebraically independent, we may specialise the $r$-th elementary symmetric function to $e_r(W)$ (which commute with each other by Lemma \ref{commuting_lemma}), which specialises $E(-t)$ to $E_W(-t)$. The coefficients of $H(t)$ are given by the Jacobi-Trudi formula: $h_n = \det(e_{1+j-i})_{i,j=1}^{n}$, which specialise to $h_n(W)$ as defined above.
\end{proof}
\begin{corollary} \label{first_degree_relation}
The commutation relation between $e_i(U)$ and $e_j(V)$ is given by the following:
\[
\sum_{k = 0}^{\min(i,j)}e_{i-k}(U) h_{k}(VU) e_{j-k}(V) = \sum_{k = 0}^{\min(i,j)} e_{j-k}(V) h_k(UV) e_{i-k}(U)
\]
More explicitly:
\[
[e_i(U), e_j(V)] = \sum_{k=1}^{\min(i,j)} e_{j-k}(V)h_k(UV)e_{i-k}(U) - e_{i-k}(U)h_k(VU)e_{j-k}(V).
\]
\end{corollary}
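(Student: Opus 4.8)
The plan is to extract Corollary \ref{first_degree_relation} directly from the generating-function identity in Theorem \ref{relation_1} by expanding both sides and comparing coefficients. First I would rewrite the relation
\[
E_U(u)E_{VU}(-uv)^{-1}E_V(v) = E_V(v)E_{UV}(-uv)^{-1}E_U(u)
\]
using the previous lemma, which identifies the multiplicative inverse of $E_W(-t)$ with the generating function $\sum_{n \geq 0} h_n(W) t^n$. Applying this with $W = VU$ and $t = uv$ gives $E_{VU}(-uv)^{-1} = \sum_{k \geq 0} h_k(VU)(uv)^k$, and similarly $E_{UV}(-uv)^{-1} = \sum_{k \geq 0} h_k(UV)(uv)^k$. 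Substituting and using $E_U(u) = \sum_i e_i(U) u^i$, $E_V(v) = \sum_j e_j(V) v^j$, the left-hand side becomes $\sum_{i,k,j} e_i(U) h_k(VU) e_j(V) u^{i+k} v^{k+j}$ and the right-hand side becomes $\sum_{j,k,i} e_j(V) h_k(UV) e_i(U) u^{i+k} v^{k+j}$.

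Next I would compare the coefficient of a fixed monomial $u^a v^b$ on both sides. On the left, the terms contributing are those with $i + k = a$ and $k + j = b$; writing the fixed exponents in the final statement as $i$ and $j$ (so $a = i$, $b = j$ after a relabelling), the sum runs over $k$ from $0$ to $\min(i,j)$, with $e_{i-k}(U) h_k(VU) e_{j-k}(V)$ contributing for each such $k$ (and the terms vanish automatically when $i - k < 0$ or $j - k < 0$ since $e_r(W) = 0$ for $r < 0$). The right-hand side gives $\sum_{k=0}^{\min(i,j)} e_{j-k}(V) h_k(UV) e_{i-k}(U)$. This yields the first displayed equation of the corollary. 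For the second (more explicit) form, I would isolate the $k = 0$ term on each side, which is $e_i(U) e_j(V)$ on the left and $e_j(V) e_i(U)$ on the right (using $h_0 = 1$), move them to opposite sides to form the commutator $[e_i(U), e_j(V)] = e_i(U)e_j(V) - e_j(V)e_i(U)$, and collect the remaining $k \geq 1$ terms.

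Since everything here is a straightforward manipulation of formal power series in the commuting variables $u, v$ over the ring $\mathcal{G}_\infty^{\mathbb{Z}}(R)$, there is no real obstacle; the only point requiring a little care is bookkeeping the substitution $-uv$ versus $uv$ and confirming that $E_W(-t)^{-1} = \sum_n h_n(W) t^n$ (rather than with a sign), which is exactly the content of the preceding lemma applied at argument $-t$. One should also note that the inverses $E_{VU}(-uv)^{-1}$ and $E_{UV}(-uv)^{-1}$ make sense in $\mathcal{G}_\infty^{\mathbb{Z}}(R)[[u,v]]$ because the constant terms $E_{VU}(0) = E_{UV}(0) = e_0(\cdot) = 1$ are units, so all the series operations are legitimate and the coefficient comparison is valid termwise.
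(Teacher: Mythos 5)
Your proposal is correct and matches the paper's argument, which likewise obtains the corollary by extracting the coefficient of $u^iv^j$ from Theorem \ref{relation_1} after expanding the inverses $E_{VU}(-uv)^{-1}$ and $E_{UV}(-uv)^{-1}$ via the lemma identifying $E_W(-t)^{-1}$ with $\sum_{n\geq 0}h_n(W)t^n$. Your sign bookkeeping and the isolation of the $k=0$ terms to form the commutator are exactly as intended.
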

\noindent
Note in particular that because $h_k(W)$ is in filtration degree $k$, the right hand side is contained in filtration degree $i+j-1$.
\begin{proof}
This equation is simply the coefficient of $u^iv^j$ in the equation in Theorem \ref{relation_1}.
\end{proof}

\begin{example} \label{example_1}
The case of $i=j=1$ in Corollary \ref{first_degree_relation} is $e_1(U)e_1(V) + e_1(VU) = e_1(V)e_1(U) + e_1(UV)$.
\end{example}
\begin{corollary} \label{monomial_algebra}
Suppose that $R$ has a $\mathbb{Z}$-basis $I$ such that the product of any two basis elements is either zero or another basis element (a ``monomial algebra''). Then $\mathcal{G}_\infty^{\mathbb{Z}}(R)$ is generated by $e_i(U)$ for $U \in I$ and $i \geq 1$. If we let $U,V \in I$, the relations are simply:
\[
E_U(u)E_{VU}(-vu)^{-1}E_V(v) = E_V(v)E_{UV}(-uv)^{-1}E_U(u)
\]
where $E_{0}(t) = 1$.
\end{corollary}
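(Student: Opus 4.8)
The plan is to verify the three claims — that the $e_i(U)$ for $U\in I$, $i\geq 1$ generate $\mathcal{G}_\infty^{\mathbb{Z}}(R)$; that they satisfy the displayed relations; and that these relations are a complete set — each building on the structural results already established. Generation is immediate: it is exactly the content of the Proposition preceding Definition \ref{E_function}, which holds for any basis $I$, and in particular for the ``monomial'' basis given in the hypothesis. So the only new work is the completeness of the relations.

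For the relations themselves, observe that for $U,V\in I$ the product $UV$ is either $0$ or a basis element; in the former case $E_{UV}(-uv)=E_0(-uv)=1$, and similarly for $VU$, so the relation of Theorem \ref{relation_1} specialises to exactly the asserted form with the convention $E_0(t)=1$. Thus every relation in the list is a genuine relation in $\mathcal{G}_\infty^{\mathbb{Z}}(R)$, inherited from Theorem \ref{relation_1}. The substantive point is that no further relations are needed. Here the strategy is a dimension/filtration count: let $A$ be the abstract $\mathbb{Z}$-algebra presented by generators $e_i(U)$ ($U\in I$, $i\geq 1$) subject only to the listed relations, and let $\phi\colon A\twoheadrightarrow \mathcal{G}_\infty^{\mathbb{Z}}(R)$ be the surjection. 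I would equip $A$ with the filtration in which $e_i(U)$ has degree $i$ (one must check the relations are filtered — the right-hand sides involve $h_k(UV)$ and $h_k(VU)$ which, since $UV,VU\in I\cup\{0\}$, have filtration degree $k$, so each relation lives in degree $i+j$ with the commutator part in degree $\leq i+j-1$, exactly as noted after Corollary \ref{first_degree_relation}). Then $\phi$ is filtered, and it suffices to show the induced map $\operatorname{gr}\phi\colon\operatorname{gr}A\to\operatorname{gr}\mathcal{G}_\infty^{\mathbb{Z}}(R)$ is an isomorphism. By Proposition \ref{Z_graded_image} and the Proposition identifying $e_r(U)$'s image with $e_r^{(U)}$, the target $\operatorname{gr}\mathcal{G}_\infty^{\mathbb{Z}}(R)$ is the free polynomial ring $\bigotimes_{U\in I}\Lambda^{(U)} = \mathbb{Z}[e_r^{(U)} : U\in I, r\geq 1]$. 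On the other hand, in $\operatorname{gr}A$ the leading term of each relation (the degree-$(i+j)$ part) is $e_i(U)e_j(V) - e_j(V)e_i(U)$, forcing all the $e_i(U)$ to commute in $\operatorname{gr}A$; hence $\operatorname{gr}A$ is a quotient of a polynomial ring on the symbols $e_i(U)$, and $\operatorname{gr}\phi$ maps this onto the polynomial ring $\mathbb{Z}[e_r^{(U)}]$ by matching generators. Since a surjection of polynomial rings on the same (finite-in-each-degree) generating set that is the identity on generators is an isomorphism, $\operatorname{gr}\phi$ is an isomorphism, hence so is $\phi$.

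The main obstacle I anticipate is making the last step fully rigorous over $\mathbb{Z}$: one must be sure that $\operatorname{gr}A$ is not merely a quotient of, but actually equal to, the polynomial ring $\mathbb{Z}[e_i(U)]$ — i.e. that beyond commutativity, the listed relations impose nothing in the associated graded. The lower-order terms of the relations could, a priori, survive in $\operatorname{gr}A$ and kill something. The clean way around this is to argue directly with a spanning set: using the relations, show every monomial in the $e_i(U)$ can be rewritten as a $\mathbb{Z}$-linear combination of ordered monomials (say, ordered by a fixed total order on $I\times\mathbb{Z}_{>0}$), so that these ordered monomials span $A$; since their images under $\phi$ are the $Z_{\vv\lambda}$-expansions of a $\mathbb{Z}$-basis of $\mathcal{G}_\infty^{\mathbb{Z}}(R)$ (as in the generation proposition, the ordered monomials in $e_i(U)$ form a basis because their leading terms do by Proposition \ref{Z_graded_image}), the spanning set maps to a basis, forcing it to be a basis of $A$ and $\phi$ to be injective. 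This rewriting is exactly powered by the explicit commutator formula in Corollary \ref{first_degree_relation}: moving an $e_i(U)$ past an $e_j(V)$ produces the ordered term plus strictly-lower-filtration corrections, so induction on filtration degree terminates.
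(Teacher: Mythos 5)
Your proposal is correct and follows essentially the same route as the paper: generation from the fact that the associated graded algebra is the free polynomial ring on the $e_i(U)$, the relations themselves as the specialisation of Theorem \ref{relation_1} using $UV,VU\in I\cup\{0\}$, and completeness because Corollary \ref{first_degree_relation} rewrites each commutator $[e_i(U),e_j(V)]$ in filtration degree $i+j-1$ purely in terms of the chosen generators. The paper leaves the final PBW-style step implicit, whereas you correctly flag the potential gap in the naive $\operatorname{gr}$-surjection argument and patch it with the standard ordered-monomial rewriting argument; this is exactly the right way to make the paper's one-line justification rigorous.
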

\begin{proof}
Recall that the associated graded algebra of $\mathcal{G}_\infty^{\mathbb{Z}}(R)$ is generated as a free polynomial algebra by the $e_i(U)$. Hence, the $e_i(U)$ generate $\mathcal{G}_\infty^{\mathbb{Z}}(R)$, and to describe the relations, it suffices to give expressions for the commutators $[e_i(U), e_j(V)]$ that lie in filtration degree $i+j-1$. This is done by Corollary \ref{first_degree_relation}, where the essential point is that $UV$ and $VU$ are either zero or in $I$, so that $h_k(UV)$ and $h_k(VU)$ are polynomials in elements of our generating set.
\end{proof}
\begin{example}
Suppose that $R = \mathbb{Z}G$, the integral group algebra of $G$. Taking $I=G$, we are in the case of Corollary \ref{monomial_algebra}. We conclude that $\mathcal{G}_\infty^{\mathbb{Z}}(kG)$ is generated by $e_i(g)$ for $i \in \mathbb{Z}_{>0}$ and $g \in G$, subject to the relations arising from the following equations of generating functions for $g, h \in G$ (where, as always, $E_k(t) = \sum_{i \geq 0} e_i(k)t^i$):
\[
E_g(u)E_{hg}(-vu)^{-1}E_h(v) = E_h(v)E_{gh}(-uv)^{-1}E_g(u).
\]
\end{example}
\begin{example}
Suppose that $R = \mbox{Mat}_{n}(\mathbb{Z})$, and $I$ is the set of elementary matrices $\{ \mathbf{E}_{i,j} \}_{i,j=1}^n$. Because $\mathbf{E}_{i,k} \mathbf{E}_{k,l} = \delta_{j,k}\mathbf{E}_{i,l}$, this is a monomial algebra. It follows $\mathcal{G}_\infty^{\mathbb{Z}}(\mbox{Mat}_n(\mathbb{Z}))$ has generators $e_n(\mathbf{E}_{ij})$ and relations
\[
{E_{i,j}}(u) {E_{k,j}}(-vu)^{\delta_{li}}{E_{k,l}}(v) = {E_{k,l}}(v) {E_{i,l}}(-uv)^{\delta_{jk}} {E_{i,j}}(u)
\]
where ${E_{i,j}}(t) = \sum_{n \geq 0}e_n(\mathbf{E}_{ij})t^n$.
\end{example}

\section{Decomposing $E_U(t)$ when $U \notin I$}
\noindent
In the previous section, we computed the commutation relations of the generators $e_i(U)$ at the cost of introducing terms of the form $e_i(W)$, where $W$ might not be an element of $I$. In this section, we express $e_i(W)$ in terms of $e_i(U)$ for $U \in I$. 
\begin{definition}
Let $\mu(r)$ denote the usual M\"{o}bius function (so $\sum_{d | n} \mu(d) = \delta_{1,n}$, the Kronecker delta). For $U \in R$, let $F_U(t)$ be defined by:
\[
F_U(t) = - \sum_{r \geq 1} \frac{\mu(r)}{r} \log(E_{U^r}(-t^r))
\]
\end{definition}
\noindent
Note that this series does not have integral coefficients; it lies in $\mathcal{G}_\infty^{\mathbb{Q}}(R)[[t]]$. We now express $F_U(t)$ in terms of $T_i(U^r)$.
\begin{proposition} \label{F_proposition}
We have the following equality of elements of $\mathcal{G}_\infty^{\mathbb{Q}}(R)[[t]]$:
\[
F_U(t) = \sum_{i \geq 1} T_i(U) t^i
\]
\end{proposition}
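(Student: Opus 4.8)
The plan is to substitute the product formula of Definition \ref{E_function} into the definition of $F_U(t)$, pass to logarithms, and finish with a M\"obius inversion. Applying Definition \ref{E_function} with $U$ replaced by $U^r$ and $t$ replaced by $-t^r$ gives
\[
E_{U^r}(-t^r) = \prod_{l \geq 1} \Theta_l\!\left(1 - t^{rl} U^r\right).
\]
Since $T_{l_1}(-)$ and $T_{l_2}(-)$ commute for $l_1 \neq l_2$, the factors $\Theta_l(1 - t^{rl}U^r)$ pairwise commute, so the logarithm of the product is the sum of the logarithms of the factors; moreover $\log \Theta_l(x) = T_l(\log x)$ directly from $\Theta_l(x) = \exp(T_l(\log x))$ (the argument $T_l(\log x)$ has vanishing constant term, so $\log$ and $\exp$ are mutually inverse here). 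Expanding $\log(1 - t^{rl}U^r) = -\sum_{m \geq 1} \frac{t^{rlm}}{m} U^{rm}$ as a power series with coefficients in $R$ and using that $T_l$ acts linearly over such scalar coefficients, I obtain
\[
\log E_{U^r}(-t^r) = \sum_{l \geq 1} T_l\!\left(\log\left(1 - t^{rl}U^r\right)\right) = -\sum_{l \geq 1}\sum_{m \geq 1} \frac{t^{rlm}}{m}\, T_l\!\left(U^{rm}\right).
\]

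Substituting into $F_U(t) = -\sum_{r\geq 1}\frac{\mu(r)}{r}\log E_{U^r}(-t^r)$ yields the triple sum
\[
F_U(t) = \sum_{r,\,l,\,m \geq 1} \frac{\mu(r)}{r m}\, t^{rlm}\, T_l\!\left(U^{rm}\right).
\]
Fixing $l$ and putting $k = rm$, the total coefficient of $t^{lk} T_l(U^k)$ is $\frac{1}{k}\sum_{r \mid k}\mu(r) = \frac{1}{k}\delta_{k,1}$, so only the $k = 1$ terms survive and $F_U(t) = \sum_{l \geq 1} T_l(U)\, t^l$, as claimed. All manipulations take place in $\mathcal{G}_\infty^{\mathbb{Q}}(R)[[t]]$ (and the relevant completed tensor product), where the series involved have no constant term, so convergence is not an issue.

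There is no serious obstacle here; the proof is essentially bookkeeping. The two points deserving explicit comment are that the $\Theta_l$-factors commute (this is exactly where the commutativity of distinct $T_l$ is used, and it is what lets $\log$ of the product split as a sum over $l$), and that $T_l$ may be applied termwise to the $R$-valued series $\log(1 - t^{rl}U^r)$, which is precisely the linearity convention under which $T_l$ was extended. The \emph{reason} the identity holds is the M\"obius relation $\sum_{r \mid k}\mu(r) = \delta_{k,1}$: the weights $\mu(r)/r$ in the definition of $F_U$ are manifestly chosen to produce this cancellation.
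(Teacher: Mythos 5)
Your proof is correct and follows essentially the same route as the paper: substitute the product expression for $E_{U^r}(-t^r)$, split the logarithm into a sum over $l$ of $T_l(\log(1 - t^{rl}U^r))$, expand the inner logarithm as a power series, and finish by the M\"obius identity $\sum_{r\mid k}\mu(r) = \delta_{k,1}$ after the change of variables $k = rm$. The only stylistic difference is that you take slightly more care to justify $\log\bigl(\prod_l \Theta_l(\cdot)\bigr) = \sum_l T_l(\log(\cdot))$ via the pairwise commutativity of the $\Theta_l$-factors, a step the paper compresses into a single line.
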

\begin{proof}
We substitute Definition \ref{E_function} into the definition of $F_U(t)$:
\begin{eqnarray*}
F_U(t) &=& - \sum_{r \geq 1} \frac{\mu(r)}{r} \log(E_{U^r}(-t^r))\\
&=& - \sum_{r \geq 1}\frac{\mu(r)}{r}\sum_{i \geq 1} T_i(\log(1 - t^{ir} U^r)) \\
&=& - \sum_{r \geq 1}\frac{\mu(r)}{r}\sum_{i \geq 1} T_i(- \sum_{d \geq 1} \frac{t^{ird}U^{rd}}{d}) \\
&=& \sum_{r \geq 1} \sum_{i \geq 1} \sum_{d \geq 1} \frac{\mu(r)t^{ird}}{rd}T_i(U^{rd}) \\
&=& \sum_{i \geq 1} \sum_{n \geq 1} \sum_{r | n} \frac{\mu(r)t^{in}}{n}T_i(U^n) \hspace{20mm} \mbox{(change variables to $n = rd$)}\\
&=&\sum_{i \geq 1} \sum_{n \geq 1} \delta_{1, n} \frac{t^{in}}{n}T_i(U^n) \\
&=&\sum_{i \geq 1} t^{i}T_i(U).
\end{eqnarray*}
In the second-last step, we used that $\sum_{r | n} \mu(r) = \delta_{1,n}$ (Kronecker delta).
\end{proof}

\begin{theorem} \label{relation_2}
Suppose that $W = \sum_i a_i U_i$, with $a_i \in \mathbb{Z}$ and $U_i \in R$. Then we have:
\[
F_W(t) = \sum_i a_i F_{U_i}(t).
\]
\end{theorem}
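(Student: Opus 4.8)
The plan is to reduce the statement to Proposition \ref{F_proposition}, which identifies $F_U(t)$ with the generating function $\sum_{i \geq 1} T_i(U) t^i$ whose coefficients lie in the universal enveloping algebras $U(\mathbb{Q} \otimes R_i)$. The crucial point is that in each factor $U(\mathbb{Q} \otimes R_i)$ the map $r \mapsto T_i(r)$ is, by construction, the composite of the identity-on-$\mathbb{Q}\otimes R$ linearisation with the canonical map into the enveloping algebra, so it is additive: $T_i(W) = T_i(\sum_j a_j U_j) = \sum_j a_j T_i(U_j)$ for $a_j \in \mathbb{Z}$ (indeed for $a_j \in \mathbb{Q}$). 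This is exactly the linearity of $T_i$ recorded in Section 3 and built into the definition in Section 4.

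First I would invoke Proposition \ref{F_proposition} to write $F_W(t) = \sum_{i \geq 1} T_i(W) t^i$ and $F_{U_j}(t) = \sum_{i \geq 1} T_i(U_j) t^i$, all as elements of $\mathcal{G}_\infty^{\mathbb{Q}}(R)[[t]]$. Then for each fixed $i$, linearity of $T_i$ gives $T_i(W) = T_i(\sum_j a_j U_j) = \sum_j a_j T_i(U_j)$. Summing against $t^i$ over $i \geq 1$ yields
\[
F_W(t) = \sum_{i \geq 1} T_i(W) t^i = \sum_{i \geq 1} \left( \sum_j a_j T_i(U_j) \right) t^i = \sum_j a_j \sum_{i \geq 1} T_i(U_j) t^i = \sum_j a_j F_{U_j}(t),
\]
which is the claim. (If the sum over $j$ is infinite one notes that, by hypothesis $W \in R$ is a finite combination, so there is nothing to check about convergence; each coefficient of $t^i$ is a finite sum.)

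There is essentially no obstacle here: the content of the theorem is entirely carried by Proposition \ref{F_proposition}, after which additivity of $F$ in its subscript is immediate from additivity of $T_i$. The only thing worth emphasising in the write-up is \emph{why} the passage to $F_U(t)$ is the right move — namely that $E_U(t)$ itself is badly behaved under addition in $U$ (the $\Theta_l$'s involve $\log$ and products, so $E_{U+V}(t) \neq E_U(t) E_V(t)$ in general), whereas the Möbius-twisted logarithm $F_U(t)$ linearises precisely because it extracts the "primitive" part $\sum_i T_i(U) t^i$. So the proof is just: apply Proposition \ref{F_proposition}, use $\mathbb{Z}$-linearity (in fact $\mathbb{Q}$-linearity) of each $T_i$, and reassemble the generating function.
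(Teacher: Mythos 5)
Your proof is correct and is exactly the paper's argument: both invoke Proposition \ref{F_proposition} to write $F_U(t) = \sum_{i \geq 1} T_i(U)t^i$ and then use the linearity of each $T_i$. No issues.
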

\begin{proof}
This immediately follows from the fact that $T_r(\sum_i a_i U_i) = \sum_i a_i T_r(U_i)$ and Proposition \ref{F_proposition}, expressing $F_U(t)$ in terms of $T_r(U)$.
\end{proof}
\noindent
If we write $W = \sum_{U \in I} a_U U$, we may extract the coefficient of $t^{n}$ in the equation in Theorem \ref{relation_2} to express $e_n(W)$ in terms of $e_n(U)$ (our generators of interest) plus terms with smaller values of $n$. By induction, this allows us to express $e_n(W)$ for arbitrary $W$ in terms of $e_r(U)$ for $U \in I$. 
\begin{example} \label{relation_eg}
We calculate $F_U(t)$ to order $t^2$:
\begin{eqnarray*}
F_U(t)
&=& e_1(U)t + \frac{1}{2}(e_1(U)^2 - e_1(U^2) - 2e_2(U))t^2 + \cdots
\end{eqnarray*}
Let us take $W = U_1 + U_2$. Then the degree 1 and 2 terms of the equation in Theorem \ref{relation_2} become:
\begin{eqnarray*}
e_1(U_1 + U_2) &=& e_1(U_1) + e_1(U_2) \\
\frac{1}{2}(e_1(U_1 + U_2)^2 -e_1((U_1 + U_2)^2) - 2e_2(U_1 + U_2)) &=&\frac{1}{2}( e_1(U_1)^2 -e_1(U_1^2) - 2e_2(U_1))  \\
& &+\frac{1}{2}(e_1(U_2)^2 -e_1(U_2^2) - 2e_2(U_2)).
\end{eqnarray*}
These rearrange to the following:
\begin{eqnarray*}
e_1(U_1 + U_2) &=& e_1(U_1) + e_1(U_2) \\
e_2(U_1 + U_2) &=& \frac{1}{2}(e_1(U_1)e_1(U_2)+e_1(U_2)e_1(U_1)  -e_1(U_1U_2) - e_1(U_2U_1))   +e_2(U_1)  + e_2(U_2) \\
\end{eqnarray*}
Note that the previous equation for $e_2(U_1 + U_2)$ is not manifestly integral. However, the relation $e_1(U_1)e_1(U_2) - e_1(U_1U_2) = e_1(U_2)e_1(U_1) - e_1(U_2U_1)$ from Example \ref{example_1} gives the following:
\[
e_2(U_1 + U_2) = e_1(U_1)e_1(U_2) - e_1(U_1U_2) + e_2(U_1) + e_2(U_2) = e_1(U_2)e_1(U_1) - e_1(U_2U_1) + e_2(U_1) + e_2(U_2)
\]
Thus $e_2(U_1 + U_2)$ can be written in terms of $e_i(V)$ for $V \in I$ (after possibly decomposing $e_1(U_1U_2)$ or $e_1(U_2U_1)$), although not in a canonical way.
\end{example}
\noindent
We have the following theorem.
\begin{theorem} \label{new_presentation}
The $\mathbb{Q}$-algebra $\mathcal{G}_\infty^{\mathbb{Q}}(R)$ admits the following presentation. The generators $e_r(U)$ are $U \in I$ and $r \in \mathbb{Z}_{>0}$. We write $E_U(t) = \sum_{i \geq 0} e_i(U)t^i$, where $e_0(U)$ is taken to be the multiplicative identity. The relations between the generators are given by the following equalities of generating functions.
\begin{equation} \label{mult_reln}
E_U(u)E_{VU}(-uv)^{-1}E_V(v) = E_V(v)E_{UV}(-uv)^{-1}E_U(u)
\end{equation}
If $W = \sum_{U \in I} a_U U$ in $R$ (where $a_U \in \mathbb{Z}$), we also have:
\begin{equation} \label{sum_reln}
\sum_{r \geq 1} \frac{\mu(r)}{r} \log(E_{W^r}(-t^r))  = 
\sum_{U \in I} a_U \sum_{r \geq 1} \frac{\mu(r)}{r} \log(E_{U^r}(-t^r))
\end{equation}
Further $\mathcal{G}_\infty^{\mathbb{Z}}(R)$ is the $\mathbb{Z}$-subalgebra of $\mathcal{G}_\infty^{\mathbb{Q}}(R)$ generated by the $e_i(U)$.
\end{theorem}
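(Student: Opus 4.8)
The plan is to realise $\mathcal{G}_\infty^{\mathbb{Q}}(R)$ as the algebra defined by the stated presentation by exhibiting an obvious surjection from the abstract algebra and showing it is an isomorphism via a comparison of associated graded rings; the integral statement will then drop out from results already established.

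First I would fix the reading of the presentation. Let $A$ be the $\mathbb{Q}$-algebra on generators $e_r(U)$ ($U\in I$, $r\ge 1$), with $e_0(U)=1$. For $W\in R$ not in $I$ the symbols $e_n(W)$ occurring in (\ref{mult_reln}) and (\ref{sum_reln}) are not new generators; writing $W=\sum_{U\in I}a_U U$, equation (\ref{sum_reln}) is to be read as a recursive recipe defining $e_n(W)$: its $r=1$ term contributes $(-1)^n e_n(W)$ to the coefficient of $t^n$, all terms with $r\ge 2$ involve only symbols $e_m(W^r)$ with $m<n$, and the right-hand side only involves $e_j(U)$ with $U\in I$; solving for $e_n(W)$ thus expresses it, by induction on $n$, as a $\mathbb{Q}$-polynomial in the $e_j(U)$, $U\in I$, with the filtration degree of each monomial at most $n$. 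With this convention the relations make sense in $A$, and $A$ is generated by $\{e_r(U):U\in I\}$. Since these elements generate $\mathcal{G}_\infty^{\mathbb{Q}}(R)$ (the Proposition after Definition \ref{E_function}, tensored with $\mathbb{Q}$) and satisfy (\ref{mult_reln}) by Theorem \ref{relation_1} and (\ref{sum_reln}) by Theorem \ref{relation_2} together with the definition of $F_U$, there is a surjective $\mathbb{Q}$-algebra map $\pi\colon A\to\mathcal{G}_\infty^{\mathbb{Q}}(R)$; one checks $\pi$ takes the recursively-defined $e_n(W)$ to the element $e_n(W)$ of Definition \ref{E_function}, because the latter satisfies the same recursion.

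Next I would filter $A$ by assigning $e_r(U)$ filtration degree $r$ (so $e_n(W)$ lies in filtration degree $\le n$ by the remark above), and $\pi$ is then a filtered map since $e_r(U)\in\mathcal{G}_\infty^{\mathbb{Q}}(R)$ has filtration degree $r$. Extracting the coefficient of $u^iv^j$ from (\ref{mult_reln}) yields the identity of Corollary \ref{first_degree_relation}, whose commutator form writes $[e_i(U),e_j(V)]$ as a sum of terms each carrying a factor $h_k(UV)$ or $h_k(VU)$ with $k\ge 1$, hence lying in filtration degree $i+j-1$. Therefore the classes $\overline{e_r(U)}$ pairwise commute in $\mathrm{gr}(A)$, so $\mathrm{gr}(A)$ is commutative, generated by the $\overline{e_r(U)}$, $U\in I$. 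On the other side, $\mathrm{gr}(\mathcal{G}_\infty^{\mathbb{Q}}(R))\cong\mathbb{Q}\otimes_{\mathbb{Z}}\bigotimes_{U\in I}\Lambda^{(U)}$ by Proposition \ref{assoc_graded_identification}, and by Proposition \ref{Z_graded_image} the image of $e_r(U)=Z_{\vv{\lambda}}$ (with $\vv{\lambda}(U)=(1^r)$) is $s_{(1^r)}^{(U)}=e_r^{(U)}$, and these are free polynomial generators of $\bigotimes_{U\in I}\Lambda^{(U)}$. Thus $\mathrm{gr}(\pi)$ is a surjection from a commutative ring generated by the $\overline{e_r(U)}$ onto a polynomial ring sending them to algebraically independent generators; such a map forces the $\overline{e_r(U)}$ to be algebraically independent, so $\mathrm{gr}(A)$ is itself that polynomial ring and $\mathrm{gr}(\pi)$ is an isomorphism. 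Since both filtrations are exhaustive and supported in degrees $\ge 0$, a filtered map with bijective associated graded is bijective (if $0\ne a\in A$ has filtration degree $d$, its nonzero image in $F_d A / F_{d-1} A$ maps to something nonzero, so $\pi(a)\ne 0$); hence $\pi$ is an isomorphism, giving the asserted presentation of $\mathcal{G}_\infty^{\mathbb{Q}}(R)$.

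Finally, the integral statement is immediate: $\mathcal{G}_\infty^{\mathbb{Z}}(R)$ is an integral form of $\mathcal{G}_\infty^{\mathbb{Q}}(R)$, and the Proposition following Definition \ref{E_function} says it is generated as a $\mathbb{Z}$-algebra by the $e_i(U)$ with $U\in I$ (their images in the associated graded ring are the $e_i^{(U)}$, which generate $\bigotimes_{U\in I}\Lambda^{(U)}$ over $\mathbb{Z}$), so it coincides with the $\mathbb{Z}$-subalgebra of $\mathcal{G}_\infty^{\mathbb{Q}}(R)$ generated by those elements. The step I expect to need the most care is the first one: setting up the recursive reading of (\ref{sum_reln}) cleanly, checking that the recursion terminates and respects the filtration, and verifying that under this reading both families of relations genuinely hold in $\mathcal{G}_\infty^{\mathbb{Q}}(R)$ — once that bookkeeping is settled, the graded comparison is routine.
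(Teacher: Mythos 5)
Your proposal is correct and takes essentially the same route as the paper's own (much more compressed) proof, which likewise rests on the $e_i(U)$ freely generating the associated graded algebra, Corollary \ref{first_degree_relation} placing $[e_i(U),e_j(V)]$ in filtration degree $i+j-1$, and Theorem \ref{relation_2} decomposing the auxiliary symbols $e_k(UV)$. Your explicit treatment of the recursive reading of (\ref{sum_reln}) and of the surjection-plus-$\mathrm{gr}$-comparison argument simply fills in details the paper delegates to the argument of Corollary \ref{monomial_algebra}.
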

\begin{proof}
As in Corollary \ref{monomial_algebra}, we use the fact that the $e_i(U)$ (for $U \in I$) generate the associated graded algebra, and the fact that Theorem \ref{relation_1} expresses the commutator $[e_i(U), e_j(V)]$ as an element in filtration degree $i+j-1$. However, unlike Corollary \ref{monomial_algebra}, this now involves the generators $e_k(UV)$, where $UV$ may not be an element of $I$. To provide a presentation, it suffices to express $e_k(UV)$ in terms of $e_r(W)$ for $W \in I$, which is precisely what is achieved by Theorem \ref{relation_2}.
\end{proof}
\noindent
\begin{remark}
This falls short of giving a presentation of $\mathcal{G}_\infty^{\mathbb{Z}}(R)$ because the second family of relations are not manifestly integral. One may extract the coefficient of $t^r$ and rearrange for $e_r(U)$, as was done for $r=2$ in Example \ref{relation_eg}. Because the resulting expression is in $\mathcal{G}_\infty^{\mathbb{Z}}(R)$, it must be possible to rewrite $e_r(U)$ as a linear combination of monomials in $e_s(V)$ for $s \leq r$ and $V \in I$. Knowing these relations would be sufficient to give a presentation. It is reasonable to expect that it is possible to give an integral expression in terms of monomials in $e_{s}(U)$ (where the $U$ are not necessarily in $I$) as in Example \ref{relation_eg} for $r=2$. However, as was already noted, there is no canonical choice of such a decomposition.
\end{remark}

\section{$\lambda$-Ring Structure}
\noindent
Before introducing a $\lambda$-ring structure on $\mathcal{G}_\infty^{\mathbb{Z}}(R)$, we prove a statement about exterior powers for wreath products.
\begin{lemma} \label{wedgelemma}
Let $U$ be an object of a symmetric tensor category $\mathcal{C}$ (over an algebraically closed field of characteristic zero). Consider the following object of the wreath product category $\mathcal{W}_n(\mathcal{C}) = (\mathcal{C}^{\boxtimes n})^{S_n}$:
\[
V = \Ind{S_1 \times S_{n-1}}{S_n}(U \boxtimes \mathbf{1}^{\boxtimes (n-1)}).
\]
Then, the $r$-th exterior power of $V$ is given by the following formula:
\[
\bigwedge\nolimits^r (V) = \bigoplus_{\substack{\lambda \vdash r \\ l(\lambda) \leq n}} \Ind{S_{n - l(\lambda)} \times \prod_i S_{m_i}}{S_n}(\mathbf{1}^{n -l(\lambda)} \boxtimes (\boxtimes_i (\bigwedge\nolimits^i(U)^{\otimes m_i} \otimes \sigma_{m_i}^{(i)}))).
\]
Here $\sigma_{m_i}^{(i)}$ is the trivial representation of the symmetric group $S_{m_i}$ if $i$ is even, and the sign representation of $i$ is odd.
\end{lemma}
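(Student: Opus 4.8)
The plan is to compute $\bigwedge^r(V)$ directly using the symmetric monoidal structure of $\mathcal{W}_n(\mathcal{C})$ and the fact that $V$ is an induced object. First I would recall that for $V = \Ind{S_1 \times S_{n-1}}{S_n}(U \boxtimes \mathbf{1}^{\boxtimes(n-1)})$, the underlying object in $\mathcal{C}^{\boxtimes n}$ is the direct sum $\bigoplus_{k=1}^n \mathbf{1}^{\boxtimes(k-1)} \boxtimes U \boxtimes \mathbf{1}^{\boxtimes(n-k)}$, i.e. a sum of $n$ ``copies'' of $U$ sitting in the $n$ different tensor slots, with $S_n$ permuting these slots (and acting on each $U$-factor through the symmetric structure of $\mathcal{C}$ when slots are swapped). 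So $V$ is, in a suitable sense, $U$ tensored with the permutation object $\mathbf{1}^{\oplus n}$ carrying the $S_n$-action, and taking exterior powers of such a ``$U \otimes (\text{permutation module})$'' object is exactly the kind of plethystic computation governed by symmetric function identities.

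The key technical step is the standard identity for exterior powers of a direct sum combined with equivariance. Working in $\mathcal{C}^{\boxtimes n}$ first (before equivariantisation), $\bigwedge^r$ of the direct sum $\bigoplus_{k=1}^n U_k$ (where $U_k$ denotes $U$ placed in slot $k$) decomposes as $\bigoplus \bigotimes_k \bigwedge^{r_k}(U_k)$ over compositions $(r_1,\dots,r_n)$ of $r$; but since distinct $U_k$ live in distinct Deligne factors, $\bigwedge^{r_k}(U_k)$ is just $\bigwedge^{r_k}(U)$ placed in slot $k$, and there are no sign subtleties between different slots. Then I would take the $S_n$-equivariant structure into account: $S_n$ permutes the slots, so the $S_n$-orbit of a composition $(r_1,\dots,r_n)$ is recorded by the partition $\lambda$ with parts the nonzero $r_k$'s, hence by the multiplicities $m_i = m_i(\lambda)$. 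The stabiliser of such a configuration inside $S_n$ is $S_{n-l(\lambda)} \times \prod_i S_{m_i}$ (the first factor fixing the empty slots, the factors $S_{m_i}$ permuting the $m_i$ slots carrying $\bigwedge^i(U)$), and equivariantisation of the orbit sum is precisely induction from this stabiliser — this is where $\Ind{S_{n-l(\lambda)}\times\prod_i S_{m_i}}{S_n}$ enters.

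The part I expect to require genuine care is tracking the sign representations $\sigma_{m_i}^{(i)}$: when two slots both carrying $\bigwedge^i(U)$ are swapped by an element of $S_{m_i}$, the braiding in the symmetric category $\mathcal{C}^{\boxtimes n}$ moves a whole $\bigwedge^i(U)$ past another $\bigwedge^i(U)$, and inside the ambient exterior power $\bigwedge^r(V)$ this swap of a block of $i$ wedge-factors past another block of $i$ wedge-factors contributes the Koszul sign $(-1)^{i \cdot i} = (-1)^i$. Hence the $S_{m_i}$-action on the multiplicity space is twisted by the sign character exactly when $i$ is odd, giving $\sigma_{m_i}^{(i)} = \mathrm{sgn}$ for $i$ odd and $\sigma_{m_i}^{(i)} = \mathrm{triv}$ for $i$ even. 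I would make this precise by choosing an explicit ordering of the wedge factors of $\bigwedge^r(V)$ compatible with the slot ordering, writing down the permutation relating two such orderings, and computing its sign; combined with the orbit-stabiliser bookkeeping above, assembling the pieces over all partitions $\lambda \vdash r$ with $l(\lambda) \le n$ yields the stated formula. Finally I would note the constraint $l(\lambda) \le n$ arises simply because one cannot place more than $n$ nonempty slots among the $n$ available tensor factors.
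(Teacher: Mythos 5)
Your argument is correct, but it takes a different technical route from the paper's. The paper works entirely inside the equivariant category: it expresses $\bigwedge^r(V)$ as the image of the antisymmetriser on $V^{\otimes r}$, decomposes $V^{\otimes r}$ by induction on $r$ via the Mackey formula into summands indexed by set partitions of $\{1,\dots,r\}$ with at most $n$ parts, and then explicitly factorises the antisymmetriser through the stabiliser $\prod_i S_i \wr S_{m_i}$ of an orbit, reading off the sign of a transposition in $S_{m_i}$ from its cycle type $1^{i(m_i-2)}2^i$ inside $S_{im_i}$. You instead apply the forgetful functor first, use the standard decomposition $\bigwedge^r(\bigoplus_k W_k) = \bigoplus_{r_1+\cdots+r_n=r}\bigotimes_k \bigwedge^{r_k}(W_k)$ valid in any symmetric tensor category in characteristic zero, and then recover the equivariant structure by orbit--stabiliser bookkeeping on compositions of $r$, with the twist $\sigma_{m_i}^{(i)}$ coming from the Koszul sign $(-1)^{i^2}=(-1)^i$ for swapping two blocks of $i$ wedge factors --- the same sign the paper extracts from the cycle type. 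Your route is arguably more transparent: it replaces the Mackey-theoretic induction on $r$ and the antisymmetriser manipulations with a single well-known plethystic identity, at the cost of having to justify carefully that an equivariant object whose underlying object is a transitively permuted direct sum is the induction of one summand from its stabiliser (with the correct residual equivariant structure, including the sign). Both proofs are sketches at the same level of rigour; to make yours fully precise you would need to pin down the identification of $\bigwedge^{r_k}$ of an object concentrated in one Deligne slot and verify that the only signs arise from reordering wedge factors, exactly as you indicate.
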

\begin{proof}
We sketch the proof. We perform the calculation by describing an exterior power of $V$ as the image of an antisymmetrising morphism of a tensor power of $V$. More precisely, $V^{\otimes r}$ carries an action of the symmetric group $S_r$ by permuting the tensor factors, and $\bigwedge\nolimits^r (V)$ is the image of the endomorphism defined by $\sum_{g \in S_r} \varepsilon(g) g$, where $\varepsilon(g)$ is the sign of $g$.
\newline \newline \noindent
Consider $V^{\otimes r}$, which we simplify using the Mackey formula. Recall that a set partition of a set $X$ is a set of subsets of $X$ whose disjoint union is equal to $X$. Then, we claim:
\begin{equation} \label{wedgetensor}
V^{\otimes r} = \bigoplus_{\substack{\mbox{set partitions $D$ of } \{1,2, \cdots, r\} \\ \mbox{having at most $n$ parts}}} \Ind{S_{n - |D|} \times \prod_{d \in D} S_1}{S_n}(\mathbf{1}^{\boxtimes (n - |D|)} \boxtimes (\boxtimes_{d \in D} U^{\otimes |d|}))
\end{equation}
We demonstrate this by induction on $r$. When $r = 1$, there is only one set partition of $\{1\}$, namely $\{\{1\}\}$, and this gives rise to a single summand which is isomorphic to $V$. For the inductive step, we must tensor with $V$. To use the Mackey formula
\[
\Ind{H}{G}(M) \otimes \Ind{K}{G}(N) = \bigoplus_{s \in H \backslash G / K} \Ind{H \cap sKs^{-1}}{G} (M \otimes sN),
\]
we must know the double cosets $(S_{n - |D|} \times \prod_{d \in D} S_1) \backslash S_n / (S_{1} \times S_{n-1})$. If we view $S_{1} \times S_{n-1}$ as the subgroup of $S_n$ fixing $1 \in \{1,2,\cdots, n \}$, then the double cosets are in bijection with the nontrivial factors in the product $(S_{n - |D|} \times \prod_{d \in D} S_1)$ (so there are $|D| + 1$ if $n > |D|$ and $|D|$ otherwise) where the double coset associated to a factor group is the set of all elements in $S_n$ mapping $1 \in \{1,2,\cdots, n\}$ to an element permuted by that factor group (when considered as a subgroup of $S_n$). Thus by the Mackey formula, tensoring an object associated to a set partition $D$ with $V$ gives a sum of induced objects associated to all set partitions obtained from $D$ by adding $r+1$ to any part $d \in D$, or in a part of its own if $|D| < n$. When we consider all set partitions $D$ of $\{1,2,\cdots, r\}$ of length at most $n$, this process of appending $r+1$ gives all set partitions of $\{1, 2, \cdots, r+1\}$ of length at most $n$.
\newline \newline \noindent
The action of $S_r$ on our decomposition in Equation \ref{wedgetensor} is given by the obvious action of $S_r$ on set partitions of $\{1, 2, \cdots, r\}$, provided that in the product $\boxtimes_{d \in D} U^{\otimes |d|}$, we identify each tensor factor of $U^{\otimes |d|}$ with an element of $d$, and that these tensor factors are also permuted according to the symmetric group action. Two summands lie in the same orbit of $S_r$ if and only if they have the same multisets $\{|d| \mid d \in D\}$ for their respective set partitions $D$. Such a multiset defines a partition $\lambda$ by reordering the elements of the multiset in nonincreasing order. It remains to show that the image of the antisymmetrising morphism on the summands in the orbit corresponding to $\lambda$ gives the term indexed by $\lambda$ in the statement of the Lemma.
\newline \newline \noindent
Note that because the sign of a permutation $g$, $\varepsilon(g)$, is multiplicative, we have the following factorisation property. Let $H$ be a subgroup of $G$ (itself a subgroup of $S_r$), and $G/H$ a collection of left coset representatives. Then:
\[
\sum_{g \in G} \varepsilon(g)g = \left(\sum_{c \in G/H}\varepsilon(c)c \right) \left( \sum_{h \in H} \varepsilon(h) h \right).
\]
We consider a set partition $D$ with associated partition $\lambda$, and take $H = \prod_i S_i \wr S_{m_i}$ (the wreath product $S_i \wr S_{m_i}$ is considered as a subgroup of $S_{im_i}$ in the usual way). To find the image of $U^{\boxtimes |D|}$ under this partial antisymmetrisation consider the case of a single factor $S_i \wr S_{m_i}$ acting on $(U^{\otimes i})^{\boxtimes m_i}$. We further factorise the antisymmetrising morphism taking $H$ to be the subgroup of $S_i^{m_i}$ in the wreath product $G = S_i \wr S_{m_i}$. This acts by antisymmetrising each of the $\boxtimes$-tensor factors; the image is $\bigwedge^i(U)^{\boxtimes m_i}$. The complementary antisymmetriser $\sum_{c \in G/H}\varepsilon(c)c $ coming from coset representatives $(S_i \wr S_{m_i}) / S_i^{m_i}$ is indexed by elements of $S_{m_i}$, and the sign of an element $g$ is found as follows. Note that a transposition in $S_{m_i}$ permutes each the factor groups of $S_{i}^{m_i}$. As an element of $S_{im_i}$, this has cycle type $1^{i(m_i - 2)}2^{i}$, which is odd if $i$ is odd and is even if $i$ is even. Because transpositions generate $S_{m_i}$ and signs are multiplicative, it follows that the complementary antisymmetriser, viewed as an element of the group algebra of $S_{m_i}$ is (up to a scalar) the antisymmetriser if $i$ is odd, and the symmetriser if $i$ is even. Thus, after antisymmetrising over $S_i \wr S_{m_i}$, we obtain $\bigwedge\nolimits^i(U)^{\otimes m_i} \otimes \sigma_{m_i}^{(i)}$. Performing this calculation for all $i$ simultaneously gives a term isomorphic to the summand indexed by $\lambda$ in the statement of the Lemma. It remains to explain how the remaining antisymmetrisation gives rise to exactly one summand for each $\lambda$.
\newline \newline \noindent
For each $\lambda$, the number of summands we have is $N = |S_r / \prod_{i} S_i \wr S_{m_i}|$. The antisymmetriser
\[
\sum_{c \in S_n/\prod_i S_i \wr S_{m_i}} \varepsilon(c)c 
\]
defines a map from one summand to the direct sum of all the $N$ summands associated to $\lambda$. Taking into consideration that the choice of the subgroup $\prod_i S_i \wr S_{m_i}$ depended on the choice of the original set partition $D$, one can check that all $N$ maps obtained in this way have the same image (which can be identified with the summand corresponding to $\lambda$ in the statement of the Lemma).
\end{proof}
\noindent
Recall that a $\lambda$-ring structure on a commutative ring $R$ is a collection of operations $\lambda^i:R \to R$ ($i \in \mathbb{Z}_{\geq 0}$) satisfying $\lambda_0(x) = 1$, $\lambda^1(x) = x$ and the following three compatibility conditions \cite{knutson}:
\[
\lambda^n(x+y) = \sum_{i + j = n} \lambda^i(x)\lambda^j(y)
\]
\[
\lambda^n(xy) = P_n(\lambda^1(x), \lambda^2(x) \cdots, \lambda^n(x), \lambda^1(y), \lambda^2(y) \cdots, \lambda^n(y))
\]
(Here $P_n$ are specific polynomials with integer coefficients.)
\[
\lambda^m (\lambda^n(x)) = Q_{m,n}(\lambda^1(x), \lambda^2(x) \cdots, \lambda^{mn}(x))
\]
(Here $Q_{m,n}$ are specific polynomials with integer coefficients.) Let $a$ be an element of a $\lambda$-ring, and $\lambda_t(a) = \sum_{n \geq 0} \lambda^n(a)t^n$, the generating function of the $\lambda$ operations applied to $a$. Also let $\psi_t(a) = \sum_{n \geq 1} \psi_n(a)t^{n-1}$ be the generating function defined by the following equation:
\[
\frac{d}{dt}\log(\lambda_t(a)) = \psi_{-t}(a).
\]
The $\psi_n$ are called Adams operations; they are are commuting endomorphisms of $R$ satisfying $\psi_n \circ \psi_m = \psi_{nm}$ \cite{knutson}.
\newline \newline \noindent
We are now able to describe a $\lambda$-ring structure on $\mathcal{G}_\infty^{\mathbb{Z}}(R)$.
\begin{theorem}
\begin{enumerate}
\item Suppose that there is a $\lambda$-ring structure on $R$. Then, there is a $\lambda$-ring structure on $\mathcal{G}_\infty^{\mathbb{Z}}(R)$ defined on $e_1(U)$ by:
\[
\sum_{n=0} \lambda^{n}(e_1(U)) t^n = \prod_{l \geq 1} \Theta_l \left( \sum_{r \geq 0}(-1)^{r(l-1)}t^{rl} \lambda^{r}(U)\right)
\]
Together with the $\lambda$-ring axioms, this determines the $\lambda$-ring structure on all of $\mathcal{G}_\infty^{\mathbb{Z}}(R)$. Over $\mathcal{G}_\infty^{\mathbb{Q}}(R)$, the $\lambda$-ring structure is given by the following Adams operations (we write $\psi_d$ for the Adams operations on $R$):
\[
\Psi_m(T_l(U)) = \sum_{\substack{d \mid m \\ \gcd(d, l) = 1}} \frac{m}{d} T_{l \frac{m}{d}}(\psi_d(U))
\]
\item Consider the case where $\mathcal{C}$ is a symmetric tensor category, so that $S_t(\mathcal{C})$ is also a symmetric tensor category, inducing a $\lambda$-ring structure on $\mathcal{G}(S_t(\mathcal{C})) = \mathcal{G}_\infty^{\mathbb{Z}}(\mathcal{G}(\mathcal{C}))$. This $\lambda$-ring structure is obtained in the above way from the $\lambda$-ring structure induced on $\mathcal{G}(\mathcal{C})$ by the symmetric tensor structure.
\end{enumerate}
\end{theorem}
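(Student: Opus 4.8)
The plan is to prove part (1) first over $\mathbb{Q}$, then descend to $\mathbb{Z}$ via a splitting argument, and finally deduce part (2) from part (1) and Lemma \ref{wedgelemma}. Note first that a $\lambda$-ring is by definition commutative, so $R$ is commutative and $\mathcal{G}_\infty^{\mathbb{Q}}(R) = \bigotimes_i U(\mathbb{Q}\otimes R_i)$ is just the polynomial algebra $\mathbb{Q}[T_l(U):l\geq1,\ U\in I]$ (all Lie brackets vanish). I would define $\Psi_m$ on $\mathcal{G}_\infty^{\mathbb{Q}}(R)$ by declaring $\Psi_m(T_l(U))$ to be the stated sum and extending multiplicatively; this is a well-defined $\mathbb{Q}$-algebra endomorphism, basis-independent because $U\mapsto\Psi_m(T_l(U))$ is $\mathbb{Z}$-linear. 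The two facts to check are $\Psi_1=\mathrm{Id}$ (immediate) and $\Psi_m\circ\Psi_n=\Psi_{mn}$; expanding both sides, using $\psi_d\psi_e=\psi_{de}$ on $R$ and that $T_j$'s with distinct $j$ commute, this reduces to: for $f\mid mn$ with $\gcd(f,l)=1$ there is exactly one factorisation $f=de$ with $d\mid m$, $e\mid n$, $\gcd(e,l)=1$, $\gcd(d,ln/e)=1$, which is a short prime-by-prime count. Since $\mathcal{G}_\infty^{\mathbb{Q}}(R)$ is torsion-free, a commuting family $\{\Psi_m\}$ with $\Psi_1=\mathrm{Id}$ and $\Psi_{mn}=\Psi_m\Psi_n$ is precisely the datum of a $\lambda$-ring structure (via $\lambda_t(a)=\exp(\sum_{m\geq1}\tfrac{(-1)^{m-1}}{m}\Psi_m(a)t^m)$), whose Adams operations are the $\Psi_m$.

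Next I would verify the generating-function formula for $\lambda^n(e_1(U))$. Since $e_1(U)=T_1(U)$ and $\Theta_l$ is multiplicative, evaluating the claimed product at $-t$ gives $\prod_l\Theta_l(\lambda_{-t^l}(U))$; taking $-t\tfrac{d}{dt}\log$, using that this factors through the Lie homomorphism $T_l$ and that $\tfrac{d}{ds}\log\lambda_s(U)=\psi_{-s}(U)$ on $R$, it collapses to $\sum_{l,n}l\,T_l(\psi_n(U))t^{ln}=\sum_m\bigl(\sum_{d\mid m}\tfrac{m}{d}T_{m/d}(\psi_d(U))\bigr)t^m=\sum_m\Psi_m(e_1(U))t^m$, which is exactly the Adams generating function of the structure just built; hence the product equals $\lambda_t(e_1(U))$. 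For integrality, the key observation is that when $U=x_1+\cdots+x_N$ is a sum of rank-one elements (so $\lambda^r(U)=e_r(x_1,\dots,x_N)$) one has $\sum_r(-1)^{r(l-1)}t^{rl}\lambda^r(U)=\prod_i(1-(-t)^lx_i)$, whence by multiplicativity of $\Theta_l$ the whole product becomes $\prod_iE_{x_i}(t)$, manifestly with coefficients in $\mathcal{G}_\infty^{\mathbb{Z}}(R)$. The general case is reduced to this: one checks that $\mathcal{G}_\infty^{\mathbb{Z}}(-)$ is functorial for arbitrary ring maps (mimicking the basis-change argument of the basis-independence theorem, the point being that Schur functions expand integrally under plethystic substitution) and that $\lambda$-ring maps induce maps compatible with the $\Psi_m$; then the splitting principle for $\lambda$-rings embeds $R$ in a $\lambda$-ring $R'$ in which $U$ splits, $\lambda^n(e_1(U))$ is integral in $\mathcal{G}_\infty^{\mathbb{Z}}(R')$, and a lattice argument (using that $\mathcal{G}_\infty^{\mathbb{Z}}(R)$ is spanned by part of a basis of $\mathcal{G}_\infty^{\mathbb{Z}}(R')$) returns it to $\mathcal{G}_\infty^{\mathbb{Z}}(R)$. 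Finally, $\lambda^i(e_1(U))$ and $e_i(U)$ have the same image $e_i^{(U)}$ in the associated graded, so by induction on filtration degree the $e_1(U)$ $\lambda$-generate $\mathcal{G}_\infty^{\mathbb{Z}}(R)$, which gives the ``determined by its value on $e_1(U)$'' statement.

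For part (2): since $S_t(\mathcal{C})$ is a symmetric tensor category, $\mathcal{G}(S_t(\mathcal{C}))=\mathcal{G}_\infty^{\mathbb{Z}}(\mathcal{G}(\mathcal{C}))$ carries the exterior-power $\lambda$-structure $\lambda^r_{\mathrm{geom}}=[\bigwedge^r(-)]$, and by the same associated-graded argument it too is determined by its values on the $e_1(U)$; so it suffices to identify $\lambda_{t,\mathrm{geom}}(e_1(U))$ with the stated product. Here $e_1(U)=\lim_n[\mathrm{Ind}_{S_1\times S_{n-1}}^{S_n}(U\boxtimes\mathbf{1}^{\boxtimes(n-1)})]=\lim_n[V]$ for $V$ as in Lemma \ref{wedgelemma}, so $\lambda^r_{\mathrm{geom}}(e_1(U))=\lim_n[\bigwedge^rV]$. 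When $U=\bigoplus_iL_i$ is a direct sum of invertible objects, $V=\bigoplus_i\mathrm{Ind}_{S_1\times S_{n-1}}^{S_n}(L_i\boxtimes\mathbf{1}^{\boxtimes(n-1)})$, hence $\bigwedge^rV=\bigoplus_{a_1+\cdots+a_N=r}\bigotimes_i\bigwedge^{a_i}(\mathrm{Ind}_{S_1\times S_{n-1}}^{S_n}(L_i\boxtimes\mathbf{1}^{\boxtimes(n-1)}))$; Lemma \ref{wedgelemma} applied to the line $L_i$ leaves only the partition $(1^{a_i})$ and yields a term whose limiting class is $e_{a_i}(L_i)$, so $\lambda_{t,\mathrm{geom}}(e_1(U))=\prod_iE_{L_i}(t)$, which is exactly the value of the stated product at $U=\sum_iL_i$ by the computation in part (1). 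The general object $U$ is reduced to the split case by the categorical splitting principle: a symmetric tensor functor from $\mathcal{C}$ into a category with enough invertibles (for $\mathcal{C}=\mathrm{Rep}(G)$, restriction to a maximal torus) induces, by functoriality of $S_t(-)$ and of $\mathcal{G}_\infty^{\mathbb{Z}}(-)$, a $\lambda$-ring embedding under which every object becomes a sum of lines; both $\lambda$-structures are compatible with it and agree on the images of the $e_1$'s, so they agree everywhere.

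The hard part, in both halves, will be the descent through the splitting principle. In part (1) one must check carefully that $\mathcal{G}_\infty^{\mathbb{Z}}(-)$ really is functorial for arbitrary ring maps (not just change of basis by invertible matrices), that the integral span is preserved, and that the lattice-theoretic descent from $R'$ back to $R$ is legitimate; in part (2) one must make the categorical splitting principle precise in the stated generality (arbitrary artinian symmetric tensor, or ring, categories — not just $\mathrm{Rep}(G)$) and know that the construction $S_t(-)$ is functorial in symmetric tensor functors. By contrast the number-theoretic identity behind $\Psi_m\Psi_n=\Psi_{mn}$, the verification of the $e_1$-generating function, and the rank-one computation $\prod_iE_{x_i}(t)$ are all routine.
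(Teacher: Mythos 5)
Your construction of the Adams operations $\Psi_m$, the verification of $\Psi_m\Psi_n=\Psi_{mn}$, the logarithmic-derivative computation identifying $\lambda_t(e_1(U))$ with the stated product of $\Theta_l$'s, and the closing upper-triangularity argument that the $\lambda^i(e_1(U))$ determine everything all match the paper's proof. The divergence — and the gap — is in how you establish integrality of $\lambda^n(e_1(U))$ and how you prove part (2): in both places you route through a splitting principle, and in both places the descent does not go through in the stated generality. For part (1), the algebraic splitting principle produces a $\lambda$-ring extension $R\hookrightarrow R'$ splitting $U$ only when $U$ is finite-dimensional (i.e.\ $\lambda_t(U)$ is a polynomial, or at worst $U$ is a difference of such elements). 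A general element of a general $\lambda$-ring satisfying only the axioms quoted in Section 7 need not be of this form, so "the general case is reduced to this" is not available; and even where it is, you still owe the verification that $R'$ can be taken free over $\mathbb{Z}$ with a basis extending $I$, so that $\mathcal{G}_\infty^{\mathbb{Z}}(R)=\mathcal{G}_\infty^{\mathbb{Q}}(R)\cap\mathcal{G}_\infty^{\mathbb{Z}}(R')$. For part (2) the situation is worse: there is no categorical splitting principle for arbitrary artinian symmetric tensor (let alone ring) categories, and even when a splitting functor exists (e.g.\ the fiber functor $\mathrm{Rep}(G)\to\mathrm{Vect}$ for $G$ finite), the induced map on Grothendieck rings is usually not injective, so agreement of the two $\lambda$-structures after pushing forward does not imply agreement upstairs. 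Your reduction "both structures agree on the images of the $e_1$'s, so they agree everywhere" needs an injection, which you do not have.

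The paper avoids splitting entirely. For integrality it writes $\lambda^r(U)=\sum_V c(r,V)V$ with $c(r,V)\in\mathbb{Z}$, introduces auxiliary variable sets $\mathbf{x}_{r,V}$, and performs the plethystic substitution $\mathbf{x}_V=\bigoplus_r\mathbf{x}_{r,V}^{\oplus c(r,V)}$ (negative multiplicities as in Macdonald) in the defining generating function of the $Z_{\vv{\lambda}}$, followed by $\omega^r$ and the evaluation $\mathbf{x}_{r,V}=(t^r,0,0,\ldots)$; since Schur functions expand integrally under each of these operations, every coefficient of the resulting series is an integer combination of the $Z_{\vv{\lambda}}$, and the series is exactly your product of $\Theta_l$'s. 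For part (2) the paper applies Lemma \ref{wedgelemma} directly to an arbitrary object $U$ (the lemma is stated and proved in that generality) and matches the resulting direct-sum decomposition of $\bigwedge^r(V)$ term by term with the generating function $\prod_l\Theta_l(1+\sum_r\lambda^r(U)(-(-t^l)^r))$; restricting the lemma to invertible $L_i$ and then trying to glue is an unnecessary detour that creates the descent problem above. If you want to salvage your route, you would need to restrict part (1) to $\lambda$-rings in which every basis element is a virtual sum of line elements after a faithfully flat $\lambda$-extension, and part (2) to categories like $\mathrm{Rep}(G)$ for $G$ connected reductive where restriction to a maximal torus is injective on Grothendieck rings — strictly weaker than what the theorem claims.
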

\begin{proof}
We initially work over $\mathcal{G}_\infty^{\mathbb{Q}}(R)$, and deduce integrality at the end. Following \cite{knutson} a $\lambda$-ring structure on a torsion-free ring is uniquely defined by the Adams operations $\psi_m$. We write the Adams operations on $R$ as $\psi_m$, and we will use them to construct Adams operations on $\mathcal{G}_\infty^{\mathbb{Q}}(R)$, which we denote $\Psi_m$.
\newline \newline \noindent
When $R$ is a $\lambda$-ring, it is in particular commutative. This means that $\mathcal{G}_\infty^{\mathbb{Q}}(R)$ is the free polynomial algebra generated by $T_l(U)$ for $l \in \mathbb{Z}_{>0}$ and $U \in I$ (the universal enveloping algebra of $\mathbb{Q} \otimes_{\mathbb{Z}} \mathcal{G}(R)$ is itself a free polynomial algebra, and $\mathcal{G}_\infty^{\mathbb{Q}}(R)$ is a tensor product of these). Thus, to define an endomorphism of $\mathcal{G}_\infty^{\mathbb{Q}}(R)$, it is enough to define the image of each $T_l(U)$. We consider $\Psi_m$ as in the statement of the theorem.
Note that $\Psi_1$ is the identity map. We also have:
\begin{eqnarray*}
\Psi_m(\Psi_n(T_l(U))) &=& \sum_{\substack{d \mid m \\ \gcd(d, l) = 1}} \frac{m}{d} \Psi_{n}(T_{l \frac{m}{d}}(\psi_d(U))) \\
&=& \sum_{\substack{d \mid m \\ \gcd(d, l) = 1}} \frac{m}{d}  \sum_{\substack{e \mid n \\ \gcd(e, lm/d) = 1}} \frac{n}{e} T_{l \frac{m}{d}\frac{n}{e}}(\psi_e(\psi_d(U))) \\
&=& \sum_{\substack{d \mid m \\ \gcd(d, l) = 1}}  \sum_{\substack{e \mid n \\ \gcd(e, lm/d) = 1}} \frac{mn}{de} T_{l \frac{mn}{de}}(\psi_{de}(U)) \\
&=&  \sum_{\substack{f \mid mn \\ \gcd(f, l) = 1}}   \frac{mn}{f} T_{l \frac{mn}{f}}(\psi_{f}(U)) \hspace{20mm} \mbox{(change variables to $f = de$)}\\
&=& \Psi_{mn}(T_l(U))
\end{eqnarray*}
This means that we have a valid collection of Adams operations, and thus a unique $\lambda$-ring structure.
\newline \newline \noindent
We set $a = e_1(U) = T_1(U)$. Then $\Psi_n(a) = \sum_{d \mid n} \frac{n}{d} T_{\frac{n}{d}}(\psi_d(U))$, and we may use this to calculate the generating function of the Adams operations.
\begin{eqnarray*}
\Psi_t(T_1(U)) &=& \sum_{n \geq 1} \sum_{d \mid n} \frac{n}{d} T_{\frac{n}{d}}(\psi_d(U)) t^{n-1} \\
&=& \sum_{d \geq 1} \sum_{r \geq 1} r T_{r}(\psi_d(U)) t^{rd-1} \hspace{20mm} \mbox{(change variables to $rd = n$)}\\
&=& t^{r-1} \sum_{r \geq 1} r T_r(\psi_{t^r}(U)) \\
&=& t^{r-1} \sum_{r \geq 1} r T_r(\frac{d}{d(-t^r)} \log(\lambda_{-t^r}(U))) \\
&=& -\frac{d}{dt}  \sum_{r \geq 1} T_r(\log(\lambda_{-t^r}(U))) \hspace{20mm} \mbox{(chain rule)}\\
\end{eqnarray*}
We may now integrate and exponentiate to obtain a generating function for $\lambda^r(T_1(U))$.
\begin{eqnarray*}
\sum_{n \geq 0} \lambda^n(T_1(U)) t^n &=& \exp( \int \Psi_{-t}(T_1(U)) dt) \\
&=& \exp( \int \frac{d}{dt}  \sum_{r \geq 1} T_r(\log(\lambda_{-(-t)^r}(U))) dt) \\
&=& \exp(  \sum_{r \geq 1} T_r(\log(\lambda_{-(-t)^r}(U)))) \\
&=& \prod_{l \geq 1}\Theta_l\left( \sum_{r \geq 0} \lambda^r(U) (-(-t)^l)^r\right)
\end{eqnarray*}
This is the claimed formula for the $\lambda$-ring structure. To see that $\lambda^r(e_1(U))$ is integral (i.e. an element of $\mathcal{G}_\infty^{\mathbb{Z}}(\mathcal{C})$), we proceed as follows. Let $c(r,V)$ ($r \in \mathbb{Z}_{ \geq 0}$, $V \in I$) be defined by
\[
\lambda^r(U) = \sum_{V \in I} c(r,V)V.
\]
Consider an infinite collection of symmetric function variables sets, denoted $\mathbf{x}_{r,V}$ for $V \in I$ and $r \in \mathbb{Z}_{> 0}$. Take the definition of the $Z_{\vv{\lambda}}$,
\[
\sum_{\vv{\lambda} \in \mathcal{P}^I} \left( \prod_{V \in I} s_{\vv{\lambda}(V)}^{(V)} \right) \otimes Z_{\vv{\lambda}} = \prod_{l \geq 1} \Theta_l(1 + \sum_{V \in I} p_l^{(V)}V),
\]
and make the substitution $\mathbf{x}_V = \bigoplus_{r \in \mathbb{Z}_{> 0}} \mathbf{x}_{r,V}^{\oplus c(r,V)}$. The right hand side becomes
\[
\prod_{l \geq 1} \Theta_l(1 + \sum_{V \in I} p_l(\bigoplus_{r \in \mathbb{Z}_{> 0}} \mathbf{x}_{r,V}^{\oplus c(r,V)})V) = \prod_{l \geq 1} \Theta_l(1 + \sum_{V \in I}\sum_{r \in \mathbb{Z}_{> 0}} c(r,V) p_l(\mathbf{x}_{r,V})V).
\]
For each variable set $\mathbf{x}_{r,V}$, we apply the involution $\omega$ $r$ times. This has the effect of multiplying $p_l(\mathbf{x}_{r,V})$ by $(-1)^{r(l-1)}$. We now evaluate each variable set $\mathbf{x}_{r,V}$ at the values $t^r,0,0,\ldots$ (this maps $p_l(x_{r,V})$ to $t^{rl}$). Our expression becomes
\begin{eqnarray*}
\prod_{l \geq 1} \Theta_l(1 + \sum_{V \in I}\sum_{r \in \mathbb{Z}_{> 0}} c(r,V) (-1)^{r(l-1)}t^{rl}V)&=&\prod_{l \geq 1} \Theta_l(1 + \sum_{r \in \mathbb{Z}_{> 0}} (-1)^{r(l-1)}t^{rl}\sum_{V \in I}c(r,V)V) \\
&=& \prod_{l \geq 1} \Theta_l(1 + \sum_{r \in \mathbb{Z}_{> 0}} (-(-t)^l)^{r}\lambda^{r}(U)).
\end{eqnarray*}
We recognise this as the generating function defining the $\lambda$-operations on $e_1(U)$. Because we began with a generating function describing integral elements and performed operations that preserve integrality, we conclude that $\lambda^i(e_1(U))$ is integral. 
\newline \newline \noindent
Consider the following generating function in the variable sets $\mathbf{x}_r$ for $r \in \mathbb{Z}_{>0}$:
\[
\prod_{l \geq 1} \Theta_l(1 + \sum_{r \geq 1} \lambda^r(U) p_l(\mathbf{x}_r)) = \sum_{\vv{\lambda} \in \mathcal{P}^{\mathbb{Z}}} \prod_{n \in \mathbb{Z}_{>0}} s_{\vv{\lambda}(n)}^{(n)} \otimes Z_{\vv{\lambda}}.
\]
The $Z_{\vv{\lambda}}$ defined by the above are not necessarily a basis of $\mathcal{G}_\infty^{\mathbb{Z}}(R)$ because $\lambda^r(U)$ may not be a basis of $R$. They are nevertheless elements of $\mathcal{G}_\infty^{\mathbb{Z}}(R)$ (i.e. integral). By applying the involution $\omega$ to $\mathbf{x}_r$ r times, and then evaluating at $\mathbf{x}_r = (t^r,0,0, \ldots)$, our generating function becomes
\[
\prod_{l \geq 1} \Theta_l(1 + \sum_{r \geq 1} \lambda^r(U) (-(-t^l)^r)) = \sum_{\vv{\lambda}} t^{\sum_{r \geq 1} r|\vv{\lambda}(r)|} Z_{\vv{\lambda}}.
\]
Here, the sum over $\vv{\lambda}$ only includes terms where $\vv{\lambda}(r)$ consists of a single row when $r$ is even, and consists of a single column when $r$ is odd. The left hand side is the generating function of $\lambda^i(e_1(U))$. The terms in the right hand side are identified with the terms in Lemma \ref{wedgelemma} (where $n$ is taken sufficiently large). This shows that this $\lambda$-ring structure is inherited from the implied symmetric tensor category structure on $S_t(\mathcal{C})$, if the $\lambda$-ring structure on $\mathcal{G}(\mathcal{C})$ comes from a symmetric tensor category structure on $\mathcal{C}$. This is because $S_t(\mathcal{C})$ (as defined in \cite{Mori}) is defined as an interpolation of finite wreath products.
\newline \newline \noindent
To see that the $\lambda^i(e_1(U))$ uniquely determine the $\lambda$-ring structure, we prove by induction on $r$ that $e_r(U)$ is a polynomial in the variables $\lambda^i(e_1(V))$ (where $i \leq r$ and any $V$ is permitted), then the $\lambda$-ring axioms guarantee that $\lambda^j(e_r(U))$ is uniquely determined and integral. The base case, $r=1$, is immediate. By inspecting the coefficient of $t^n$ in the generating function in Definition \ref{E_function}, we see that $e_n(U)$ is equal to $T_n(U)$ plus a polynomial in $T_s(V)$ for $s<n$ and $V$ arbitrary. A similar inspection for the generating function in the statement of the theorem shows that $\lambda^r(e_1(U))$ equals $T_r(U)$ plus lower order terms. Hence, $\lambda^r(e_1(U)) - e_r(U)$ is a (necessarily integral) polynomial in $T_s(V)$ for $s < r$ and arbitrary $V$. By an upper triangularity argument, $\lambda^r(e_1(U)) - e_r(U)$ is a polynomial in $e_s(V)$ for $s < r$ and $V$ arbitrary. By induction $e_r(U)$ is a polynomial in $\lambda^{i}(e_1(V))$. We conclude that the $\lambda$-ring axioms (which define how $\lambda$-operations behave on sums and products) determine the $\lambda$-operations on $e_r(U)$ and hence all of $\mathcal{G}_\infty^{\mathbb{Q}}(\mathcal{C})$.
\end{proof}

\section{Hopf Algebra Structure}
\begin{theorem} \label{hopf_str}
There is a Hopf algebra structure $(\Delta, \epsilon, S)$ on $\mathcal{G}_\infty^{\mathbb{Z}}(\mathcal{C})$ defined on the generating functions $E_U(t)$ by:
\begin{equation}
\Delta(E_U(t)) = E_U(t) \otimes E_U(t),
\end{equation}
\begin{equation}
\epsilon(E_U(t)) = 1,
\end{equation}
\begin{equation}
S(E_U(t)) = E_U(t)^{-1}.
\end{equation}
When working over $\mathcal{G}_\infty^{\mathbb{Z}}(\mathcal{C})$, we have that the $T_i(U)$ are primitive.
\end{theorem}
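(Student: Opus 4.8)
The plan is to obtain the Hopf structure on $\mathcal{G}_\infty^{\mathbb{Z}}(R)$ by restricting an obvious one on $\mathcal{G}_\infty^{\mathbb{Q}}(R)$. Since $\mathcal{G}_\infty^{\mathbb{Q}}(R) = \bigotimes_{i \geq 1} U(\mathbb{Q} \otimes R_i)$ is a restricted tensor product of universal enveloping algebras, each factor $U(\mathbb{Q} \otimes R_i)$ carries its standard cocommutative Hopf structure with $\mathbb{Q} \otimes R_i$ primitive, and a tensor product of Hopf algebras is again a Hopf algebra, there is a canonical Hopf structure $(\Delta_{\mathbb{Q}}, \epsilon_{\mathbb{Q}}, S_{\mathbb{Q}})$ on $\mathcal{G}_\infty^{\mathbb{Q}}(R)$ in which every $T_i(U)$ is primitive, $\epsilon_{\mathbb{Q}}(T_i(U)) = 0$, and $S_{\mathbb{Q}}(T_i(U)) = -T_i(U)$. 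Every element is supported on finitely many tensor factors, so the infinite product causes no difficulty.

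Next I would compute the effect of $(\Delta_{\mathbb{Q}}, \epsilon_{\mathbb{Q}}, S_{\mathbb{Q}})$ on $E_U(t) = \prod_{l \geq 1} \Theta_l(1 - (-t)^l U)$. The point is that for any $x$ with lowest term $1$, the element $T_l(\log x)$ is primitive, being a $\bigotimes_U \Lambda^{(U)}$-linear combination of the primitive elements $T_l(r)$, so $\Theta_l(x) = \exp(T_l(\log x))$ is grouplike: $\Delta_{\mathbb{Q}}(\Theta_l(x)) = \Theta_l(x) \otimes \Theta_l(x)$, $\epsilon_{\mathbb{Q}}(\Theta_l(x)) = 1$, and $S_{\mathbb{Q}}(\Theta_l(x)) = \Theta_l(x)^{-1}$ (the last either directly or via Lemma \ref{inverse} together with the fact that the antipode inverts grouplike elements). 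Since the $\Theta_l$ for distinct $l$ commute and $\Delta_{\mathbb{Q}}, \epsilon_{\mathbb{Q}}$ are algebra maps while $S_{\mathbb{Q}}$ is an anti-map, multiplying over $l$ gives $\Delta_{\mathbb{Q}}(E_U(t)) = E_U(t) \otimes E_U(t)$, $\epsilon_{\mathbb{Q}}(E_U(t)) = 1$, and $S_{\mathbb{Q}}(E_U(t)) = E_U(t)^{-1}$ for every $U \in R$; these are the asserted formulas, and $E_U(t)$ is grouplike.

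It then remains to check that $\Delta_{\mathbb{Q}}, \epsilon_{\mathbb{Q}}, S_{\mathbb{Q}}$ restrict to the $\mathbb{Z}$-form. By Theorem \ref{new_presentation}, $\mathcal{G}_\infty^{\mathbb{Z}}(R)$ is generated over $\mathbb{Z}$ by $\{e_i(U) : U \in I,\ i \geq 1\}$, so it suffices to evaluate these maps on those generators. From $\Delta_{\mathbb{Q}}(E_U(t)) = E_U(t) \otimes E_U(t)$ one reads $\Delta_{\mathbb{Q}}(e_n(U)) = \sum_{i+j=n} e_i(U) \otimes e_j(U) \in \mathcal{G}_\infty^{\mathbb{Z}}(R) \otimes_{\mathbb{Z}} \mathcal{G}_\infty^{\mathbb{Z}}(R)$, which embeds into $\mathcal{G}_\infty^{\mathbb{Q}}(R) \otimes_{\mathbb{Q}} \mathcal{G}_\infty^{\mathbb{Q}}(R)$ since $\mathcal{G}_\infty^{\mathbb{Z}}(R)$ is $\mathbb{Z}$-free; $\epsilon_{\mathbb{Q}}(e_n(U)) = \delta_{n,0}$; and since $\sum_n h_n(U) t^n$ inverts $E_U(-t)$ we have $E_U(t)^{-1} = \sum_n (-1)^n h_n(U) t^n$, whence $S_{\mathbb{Q}}(e_n(U)) = (-1)^n h_n(U) \in \mathcal{G}_\infty^{\mathbb{Z}}(R)$ by the determinantal definition of $h_n(U)$. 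Defining $(\Delta, \epsilon, S)$ as the restrictions, all Hopf axioms are inherited through the injection $\mathcal{G}_\infty^{\mathbb{Z}}(R) \hookrightarrow \mathcal{G}_\infty^{\mathbb{Q}}(R)$, the displayed formulas hold, and conversely these formulas determine $(\Delta, \epsilon, S)$ uniquely because the $e_i(U)$ generate. The assertion that the $T_i(U)$ are primitive is then the observation that the $\mathbb{Q}$-linear extension of $(\Delta, \epsilon, S)$ is precisely $(\Delta_{\mathbb{Q}}, \epsilon_{\mathbb{Q}}, S_{\mathbb{Q}})$, in which primitivity holds by construction.

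The argument is essentially formal, and the main (mild) obstacle is bookkeeping in the second step: the manipulations defining $\Theta_l$ and $E_U(t)$, and the identity $\Delta_{\mathbb{Q}}(\exp P) = \exp(\Delta_{\mathbb{Q}} P)$ for primitive $P$, take place in a completed tensor product (completion with respect to the $\bigotimes_U \Lambda^{(U)}$-grading together with $t$-adic completion). One checks that $\Delta_{\mathbb{Q}}$, $\epsilon_{\mathbb{Q}}$, $S_{\mathbb{Q}}$ act trivially on the symmetric-function variables and hence are continuous there, which legitimises these computations.
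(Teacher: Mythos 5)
Your proof is correct, but it runs in the opposite logical direction from the paper's. The paper takes the formulas of the theorem as the \emph{definition} of $(\Delta,\epsilon,S)$ on the generators of the presentation in Theorem \ref{new_presentation}, checks well-definedness by observing that relation (\ref{mult_reln}) is an identity among grouplike elements and relation (\ref{sum_reln}) is linear in the primitives $\log(E_{U^r}(-t^r))$, and only then \emph{deduces} that the $T_i(U)$ are primitive by extracting coefficients of $F_U(t)$ via Proposition \ref{F_proposition}. You instead start from the canonical cocommutative Hopf structure on $\bigotimes_i U(\mathbb{Q}\otimes R_i)$, in which the $T_i(U)$ are primitive by fiat, deduce that $\Theta_l(x)$ and hence $E_U(t)$ are grouplike, and then verify that the three maps preserve the integral form. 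Your route buys two things: existence of the Hopf structure is automatic (no need to check compatibility with the relations of the presentation, which the paper treats as "easily follows"), and you make the integrality of the antipode explicit via $S(e_n(U)) = (-1)^n h_n(U)$ and the determinantal formula — a point the paper leaves implicit (it is essentially its Lemma on $\sum_n h_n(W)t^n$ inverting $E_W(-t)$, stated just before Corollary \ref{first_degree_relation}). What the paper's route buys is brevity and the fact that the formulas in the statement are manifestly the defining data. Your handling of the remaining details (commutativity of the $\Theta_l$ across $l$ so that the anti-homomorphism $S$ still multiplies out correctly, and the completed tensor product in which the $\exp$/$\log$ manipulations live) is sound.
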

\begin{proof}
The fact that $\epsilon$ and $S$ are well defined (respect the algebra structure) and satisfy the Hopf algebra axioms easily follows from the fact that $E_U(t)$ is grouplike and $\log(E_U(t))$ is primitive with respect to $\Delta$ when using the presentation in Theorem \ref{new_presentation}. Note that $F_U(t)$ is a linear combination of $\log(E_{U^r}(-t^r))$, and therefore primitive. By Proposition \ref{F_proposition}, we may take the $t^i$ coefficient of $F_U(t)$ to deduce that $T_i(U)$ is primitive.
\end{proof}
\begin{remark}
The above Hopf algebra structure is similar to that of the ring of symmetric functions, and coincides with it when $\mathcal{C} = \mbox{Vect}$. A succinct way of characterising the Hopf algebra structure is to say that the generating functions $E_U(t)$ are grouplike.
\end{remark}
\begin{remark}
The comultiplication is categorified by a ``restriction'' functor 
\[
\underline{Rep}(\mathcal{C} \wr S_{t_1 + t_2}) \to \underline{Rep}(\mathcal{C} \wr S_{t_1}) \boxtimes \underline{Rep}(\mathcal{C} \wr S_{t_2}).
\]
As the Grothendieck ring $G_\infty^{\mathbb{Z}}(\mathcal{C}) = \mathcal{G}(S_t(\mathcal{C}))$ does not depend on $t$, this gives rise to a comultiplication on $G_\infty^{\mathbb{Z}}(\mathcal{C})$. Consider the case $\mathcal{C} = \mbox{Vect}$, so that $S_t(\mathcal{C})$ is the usual Deligne category $\underline{Rep}(S_t)$. Recall that this category is the Karoubian envelope of a category $\underline{Rep}_0(S_t)$ whose objects are $V^{\otimes n}$ as discussed in \cite{OC}. The functor we are interested in, $\mathcal{F}$, satisfies $\mathcal{F}(V) = V \boxtimes 1 \oplus 1 \boxtimes V$. The wreath product case generalises this. See definition 4.21 in \cite{Mori}. One easily checks that the functor takes the antisymmetrising endomorphism of $U^{\boxtimes (m+n)}$ to the tensor product of the antisymmetrising endomorphisms of $U^{\boxtimes m}$ and $U^{\boxtimes n}$ (for all possible values of $m$ and $n$). Passing to the Grothendieck ring, we obtain a comultiplication $\Delta$, which satisfies the property that $\Delta(e_n(U)) = \sum_{i=0}^n e_i(U) \otimes e_{n-i}(U)$, namely our comultiplication.
\end{remark}
\noindent
We now express the comultplication in terms of generating functions analogously to the how the multiplication was expressed in Theorem \ref{mult_gen_fn} and Corollary \ref{multiplication_duality}.
\begin{theorem} \label{hopf_generating_functions}
We have the following equalitiy of generating functions:
\begin{eqnarray*}
& &
\sum_{\vv{\lambda} \in \mathcal{P}^{I}} \left(\prod_{U \in I(\mathcal{C})} s_{\vv{\lambda}(U)}(\underbar{$x$}^{(U)}) \right) \otimes \Delta(\tilde{X}_{\vv{\lambda}}) \\
&=&
\left(\sum_{\vv{\mu} \in \mathcal{P}^{I}} \left(\prod_{U \in I} s_{\vv{\mu}(U)}(\mathbf{x}_U) \right) \otimes (Z_{\vv{\mu}} \otimes 1) \right)
\left(\sum_{\vv{\nu} \in \mathcal{P}^{I}} \left(\prod_{U \in I} s_{\vv{\nu}(U)}(\mathbf{x}_U) \right) \otimes (1 \otimes Z_{\vv{\nu}}) \right)
\end{eqnarray*}
\end{theorem}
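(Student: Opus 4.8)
The plan is to exploit the fact that $\Delta$ is an algebra homomorphism together with the generating-function identity for $Z_{\vec\lambda}$ from Definition~\ref{general_Z_def}. First I would recall that the left-hand side is, by definition of the $Z_{\vec\lambda}$ (written multiplicatively via $\Theta_l$), nothing but
\[
\Delta\left( \prod_{l=1}^{\infty} \Theta_l\Bigl(1 + \sum_{U \in I} p_l(\mathbf{x}_U) U\Bigr)\right),
\]
where $\Delta$ acts only on the $\mathcal{G}_\infty^{\mathbb{Z}}(R)$-tensor-factor (the symmetric-function coefficients are untouched). Since $\Delta$ is a ring homomorphism, it commutes with the product over $l$ and with $\Theta_l = \exp\circ\, T_l \circ \log$; and since $\Delta(T_i(U)) = T_i(U)\otimes 1 + 1\otimes T_i(U)$ (the $T_i(U)$ are primitive, by Theorem~\ref{hopf_str}, applied at each $U \in I$ and extended linearly), one gets $\Delta(\Theta_l(x)) = \Theta_l(x)\otimes 1 \cdot 1 \otimes \Theta_l(x)$ — more precisely $\Delta(\exp(T_l(\log x))) = \exp((T_l\otimes 1 + 1\otimes T_l)(\log x)) = (\exp(T_l(\log x))\otimes 1)(1\otimes \exp(T_l(\log x)))$, using that the two summands commute.

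Next I would feed this back into the product over $l$. Because $\Theta_{l_1}$ and $\Theta_{l_2}$ commute for $l_1\neq l_2$, and because the ``first-leg'' factors $\Theta_l(\cdots)\otimes 1$ all commute with the ``second-leg'' factors $1\otimes\Theta_l(\cdots)$, I can separate the whole product:
\[
\Delta\left(\prod_{l}\Theta_l\Bigl(1 + \sum_U p_l(\mathbf{x}_U)U\Bigr)\right)
= \left(\prod_{l}\Theta_l\Bigl(1 + \sum_U p_l(\mathbf{x}_U)U\Bigr)\otimes 1\right)
\left(1\otimes \prod_{l}\Theta_l\Bigl(1 + \sum_U p_l(\mathbf{x}_U)U\Bigr)\right).
\]
Now applying Definition~\ref{general_Z_def} to each of the two factors rewrites each as $\sum_{\vec\mu} \prod_U s_{\vec\mu(U)}(\mathbf{x}_U)\otimes(Z_{\vec\mu}\otimes 1)$ and $\sum_{\vec\nu}\prod_U s_{\vec\nu(U)}(\mathbf{x}_U)\otimes(1\otimes Z_{\vec\nu})$ respectively — which is exactly the claimed right-hand side. (Here one should note that the same variable sets $\mathbf{x}_U$ are used on both legs, which matches the statement, where no $\mathbf{y}_U$ appear; this is in contrast to Theorem~\ref{mult_gen_fn}, and reflects that $\Delta$ is dual to multiplication, not to the tensor product of module categories.)

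The only genuinely delicate point is justifying the manipulations of $\Theta_l$ under $\Delta$ at the level of the completed tensor product $\left(\bigotimes_U \Lambda^{(U)}\right)\hat\otimes\,\mathcal{G}_\infty^{\mathbb{Z}}(R)$: one must check that $\Delta$ extends continuously to the completion and that $\exp$, $\log$, and the infinite product over $l$ all converge in the relevant topology so that the formal identity $\Delta\circ\exp\circ T_l\circ\log = (\ldots)$ is legitimate. This is routine given that everything is graded with finite-dimensional graded pieces and $1 + \sum_U p_l(\mathbf{x}_U)U$ has leading term $1$, but it is the step that requires care rather than the algebra. A secondary bookkeeping point is matching $\tilde X_{\vec\lambda}$ on the left-hand side with $Z_{\vec\lambda}$; I would simply take $\tilde X_{\vec\lambda} = Z_{\vec\lambda}$ (the notation in the statement), so that the left-hand side is $\Delta$ applied to the defining generating series of the $Z_{\vec\lambda}$, and no further identification is needed.
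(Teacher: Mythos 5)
Your proposal is correct and follows essentially the same route as the paper: apply $\Delta$ to the multiplicative form of the defining generating function, use primitivity of the $T_l$ (so the exponential splits into commuting first-leg and second-leg factors), and reassemble each leg as the generating series of $Z_{\vv{\mu}}\otimes 1$ and $1\otimes Z_{\vv{\nu}}$. Your reading of $\tilde X_{\vv{\lambda}}$ as $Z_{\vv{\lambda}}$ also matches what the paper's own proof does.
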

\begin{proof}
Recall that $\Delta(T_l(U)) = T_l(U) \otimes 1 + 1 \otimes T_l(U)$ for arbitrary $U$. Therefore: 
\begin{eqnarray*}
& &
\sum_{\vv{\lambda} \in \mathcal{P}^{I}} \left(\prod_{U \in I(\mathcal{C})} s_{\vv{\lambda}(U)}(\mathbf{x}_U) \right) \otimes \Delta(Z_{\vv{\lambda}})  \\
&=&
\prod_{l=1}^{\infty}\exp \left( \Delta \left(T_l \left(\log \left(1 + \sum_{U \in I} p_l(\mathbf{x}_U)U\right)\right) \right) \right)\\
&=&
\prod_{l=1}^{\infty}\exp \left( T_l \left(\log \left(1 + \sum_{U \in I} p_l(\mathbf{x}_U)U\right) \right) \otimes 1 + 1 \otimes T_l \left(\log \left(1 + \sum_{U \in I} p_l(\mathbf{x}_U)U\right) \right) \right) \\
&=&
\prod_{l=1}^{\infty}\exp \left( T_l \left(\log \left(1 + \sum_{U \in I} p_l(\mathbf{x}_U)U\right) \right) \otimes 1 \right)
\prod_{l=1}^{\infty}\exp \left( 1 \otimes T_l \left(\log \left(1 + \sum_{U \in I} p_l(\mathbf{x}_U)U\right) \right) \right) \\
&=&
\left(\sum_{\vv{\mu} \in \mathcal{P}^{I}} \left(\prod_{U \in I} s_{\vv{\mu}(U)}(\mathbf{x}_U) \right) \otimes (Z_{\vv{\mu}} \otimes 1) \right) 
\left(\sum_{\vv{\nu} \in \mathcal{P}^{I}} \left(\prod_{U \in I} s_{\vv{\nu}(U)}(\mathbf{x}_U) \right) \otimes (1 \otimes Z_{\vv{\nu}}) \right) \\
\\
\end{eqnarray*}
\end{proof}
\begin{corollary}\label{coefficient_corollary}
The coefficient of $Z_{\vv{\mu}} \otimes Z_{\vv{\nu}}$ in $\Delta(Z_{\vv{\lambda}})$ is equal to the coefficient of
\[
\prod_{U \in I} s_{\vv{\lambda}(U)}(\mathbf{x}_U)
\]
in 
\[
\prod_{U \in I} s_{\vv{\mu}(U)}(\mathbf{x}_U) \times \prod_{U \in I} s_{\vv{\nu}(U)}(\mathbf{x}_U)
\]
\end{corollary}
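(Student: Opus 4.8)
The plan is to read the claimed formula off directly from the generating-function identity of Theorem \ref{hopf_generating_functions} by extracting coefficients in the Schur basis. First I would note that, since the Schur functions $s_\lambda$ form a $\mathbb{Z}$-basis of $\Lambda$, the products $\prod_{U \in I} s_{\vv{\lambda}(U)}(\mathbf{x}_U)$ (for $\vv{\lambda} \in \mathcal{P}^I$) form a $\mathbb{Z}$-basis of $\bigotimes_{U \in I} \Lambda^{(U)}$; hence in an identity taking values in $\left(\bigotimes_{U \in I}\Lambda^{(U)}\right) \hat{\otimes} A$ (for any ring $A$) it makes sense to compare the coefficients of each basis element $\prod_{U \in I} s_{\vv{\lambda}(U)}(\mathbf{x}_U)$.

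Next I would expand the right-hand side of the identity in Theorem \ref{hopf_generating_functions}. Multiplying out the two sums gives
\[
\sum_{\vv{\mu}, \vv{\nu} \in \mathcal{P}^I} \left( \prod_{U \in I} s_{\vv{\mu}(U)}(\mathbf{x}_U) \times \prod_{U \in I} s_{\vv{\nu}(U)}(\mathbf{x}_U) \right) \otimes (Z_{\vv{\mu}} \otimes Z_{\vv{\nu}}),
\]
where crucially both factors involve the same variable sets $\mathbf{x}_U$ (in contrast with Theorem \ref{mult_gen_fn}), so the indicated product is taken inside each $\Lambda^{(U)}$. Let $b_{\vv{\mu}, \vv{\nu}}^{\vv{\lambda}}$ denote the coefficient of $\prod_{U \in I} s_{\vv{\lambda}(U)}(\mathbf{x}_U)$ in $\prod_{U \in I} s_{\vv{\mu}(U)}(\mathbf{x}_U) \times \prod_{U \in I} s_{\vv{\nu}(U)}(\mathbf{x}_U)$, re-expanded in the Schur basis (concretely this is the product $\prod_{U \in I} c_{\vv{\mu}(U), \vv{\nu}(U)}^{\vv{\lambda}(U)}$ of Littlewood--Richardson coefficients, although this is not needed for the statement). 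Comparing the coefficients of $\prod_{U \in I} s_{\vv{\lambda}(U)}(\mathbf{x}_U)$ on the two sides of the identity then yields
\[
\Delta(Z_{\vv{\lambda}}) = \sum_{\vv{\mu}, \vv{\nu} \in \mathcal{P}^I} b_{\vv{\mu}, \vv{\nu}}^{\vv{\lambda}} \, (Z_{\vv{\mu}} \otimes Z_{\vv{\nu}}).
\]

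Finally, since the $Z_{\vv{\lambda}}$ form a basis of $\mathcal{G}_\infty^{\mathbb{Z}}(R)$ (their images in the associated graded algebra are the $\prod_{U \in I} s_{\vv{\lambda}(U)}^{(U)}$ by Proposition \ref{Z_graded_image}, which are a basis there), the elements $Z_{\vv{\mu}} \otimes Z_{\vv{\nu}}$ form a basis of $\mathcal{G}_\infty^{\mathbb{Z}}(R) \otimes \mathcal{G}_\infty^{\mathbb{Z}}(R)$, so the coefficient of $Z_{\vv{\mu}} \otimes Z_{\vv{\nu}}$ in $\Delta(Z_{\vv{\lambda}})$ is precisely $b_{\vv{\mu}, \vv{\nu}}^{\vv{\lambda}}$, which is the asserted quantity. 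I do not anticipate a genuine obstacle: the substance of the corollary has already been packaged into Theorem \ref{hopf_generating_functions}, and the only point requiring care is the bookkeeping of variable sets --- recognising that, unlike for the multiplication in Corollary \ref{multiplication_duality}, no new variables are introduced and the Schur product happens factor-by-factor within $\bigotimes_{U \in I} \Lambda^{(U)}$.
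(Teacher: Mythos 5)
Your proposal is correct and follows exactly the paper's argument: the corollary is obtained by extracting the coefficient of $\prod_{U \in I} s_{\vv{\lambda}(U)}(\mathbf{x}_U)$ from both sides of the identity in Theorem \ref{hopf_generating_functions}, which the paper dispatches in one line and you have simply written out in full. Your observation that both factors on the right-hand side share the same variable sets (so the Schur product is taken factor-by-factor inside $\bigotimes_{U \in I}\Lambda^{(U)}$, yielding products of Littlewood--Richardson coefficients) is precisely the intended bookkeeping.
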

\begin{proof}
This immediately follows from Theorem \ref{hopf_generating_functions} by extracting the coefficients of the relevant Schur functions.
\end{proof}
\noindent
We conclude this section by defining some subalgebras that define integral forms of tensor powers of the algebra $U(\mathbb{Q} \otimes_{\mathbb{Z}}R)$.
\begin{definition}
Let $\mathcal{G}_k^{\mathbb{Z}}(R)$ be the subalgebra of $\mathcal{G}_\infty^{\mathbb{Z}}(R)$ generated by $e_i(U)$ with $i \leq k$ and arbitrary $U$.
\end{definition}
\noindent
It easily follows that $\mathcal{G}_k^{\mathbb{Z}}(R)$ is a sub-Hopf algebra of $\mathcal{G}_\infty^{\mathbb{Z}}(R)$. The following proposition describes these subalgebras.
\begin{proposition}
The algebra $\mathcal{G}_1^\mathbb{Z}(R)$ is an integral form of the universal enveloping algebra of $\mathbb{Q} \otimes_\mathbb{Z} R$ (considered as a Lie algebra). This integral form has a $\mathbb{Z}$-basis consisting of PBW monomials in a basis of $R$. Similarly, $\mathcal{G}_k^\mathbb{Z}(R)$ is an integral form of the $k$-fold tensor product of the universal enveloping algebra $U(\mathbb{Q} \otimes_\mathbb{Z} R)$. The Hopf algebra structures coming from $\mathcal{G}_\infty^{\mathbb{Z}}(R)$ and from the universal enveloping algebra coincide. Finally, if $R$ is of finite rank over $\mathbb{Z}$, each $\mathcal{G}_k^\mathbb{Z}(R)$ is noetherian.
\end{proposition}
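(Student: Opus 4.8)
The plan is to deduce the proposition from two facts already available: the identification of the associated graded ring of $\mathcal{G}_\infty^{\mathbb{Z}}(R)$ with the free polynomial ring $\bigotimes_{U\in I}\Lambda^{(U)}$ on the basic hooks (under which $e_r(U)\mapsto e_r^{(U)}$; see \cite{Nate} and the proof that the $e_r(U)$ generate $\mathcal{G}_\infty^{\mathbb{Z}}(R)$), and the Poincar\'e--Birkhoff--Witt theorem over $\mathbb{Z}$. First I would pin down the rational picture. Inspecting the coefficient of $t^n$ in the product of Definition \ref{E_function} shows that $e_n(U)$ lies in the subalgebra $\bigotimes_{l=1}^{n}U(\mathbb{Q}\otimes_{\mathbb{Z}}R_l)$ of $\mathcal{G}_\infty^{\mathbb{Q}}(R)$, and that $e_n(U)\equiv \pm T_n(U)$ modulo the subalgebra generated by the $T_s(V)$ with $s<n$ (with $e_1(U)=T_1(U)$). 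Hence, by an upper-triangularity induction on $n\le k$, the $\mathbb{Q}$-subalgebra generated by $\{e_i(U):i\le k,\ U\in R\}$ contains every $T_i(U)$ with $i\le k$ and is contained in $\bigotimes_{i=1}^{k}U(\mathbb{Q}\otimes_{\mathbb{Z}}R_i)$; since that tensor product is generated by those same $T_i(U)$, the two agree. Because $\mathcal{G}_k^{\mathbb{Z}}(R)$ is a submodule of the free $\mathbb{Z}$-module $\mathcal{G}_\infty^{\mathbb{Z}}(R)$, tensoring with $\mathbb{Q}$ is injective and identifies $\mathcal{G}_k^{\mathbb{Z}}(R)\otimes_{\mathbb{Z}}\mathbb{Q}$ with this subalgebra, so $\mathcal{G}_k^{\mathbb{Z}}(R)$ is an integral form of $\bigotimes_{i=1}^{k}U(\mathbb{Q}\otimes_{\mathbb{Z}}R_i)$, the latter being, as a Hopf algebra, the enveloping algebra of $\bigoplus_{i=1}^{k}(\mathbb{Q}\otimes_{\mathbb{Z}}R)_i$ (distinct summands commuting).

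Next I would produce the $\mathbb{Z}$-basis. Fix a total order on $\{(i,U):1\le i\le k,\ U\in I\}$; the claim is that the corresponding ordered monomials in the $e_i(U)$ form a $\mathbb{Z}$-basis of $\mathcal{G}_k^{\mathbb{Z}}(R)$. They span: any product of generators is rewritten as a $\mathbb{Z}$-combination of ordered monomials of no larger filtration degree by repeated swaps, since each commutator $[e_i(U),e_j(V)]$ lies in $\mathcal{G}_k^{\mathbb{Z}}(R)$ and in strictly smaller filtration degree (at most $i+j-1$, by Corollary \ref{first_degree_relation}), so the procedure terminates by descending induction on filtration degree. They are linearly independent because their leading symbols in $\mathrm{gr}\,\mathcal{G}_\infty^{\mathbb{Z}}(R)=\bigotimes_{U\in I}\Lambda^{(U)}$ are the distinct monomials in the $e_i^{(U)}$ with $i\le k$, part of a free polynomial basis; the same two facts identify $\mathrm{gr}\,\mathcal{G}_k^{\mathbb{Z}}(R)$ with the polynomial subring $\mathbb{Z}[e_i^{(U)}:1\le i\le k,\ U\in I]$. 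For $k=1$, $e_1=T_1$ is $\mathbb{Z}$-linear and, by Example \ref{example_1}, satisfies $[e_1(U),e_1(V)]=e_1([U,V])$, hence is a homomorphism of Lie rings $R\to\mathcal{G}_1^{\mathbb{Z}}(R)$; the induced map from the integral enveloping algebra $U_{\mathbb{Z}}(R)$ (which has a PBW basis, $R$ being $\mathbb{Z}$-free) is surjective and carries the integral PBW monomials onto the spanning family of ordered monomials in $e_1(I)$, so it is an isomorphism. Thus $\mathcal{G}_1^{\mathbb{Z}}(R)\cong U_{\mathbb{Z}}(R)$, with PBW basis indexed by ordered monomials in the basis $I$ of $R$.

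For the Hopf-algebraic claim, first note that $\mathcal{G}_k^{\mathbb{Z}}(R)$ is a sub-Hopf-algebra of $\mathcal{G}_\infty^{\mathbb{Z}}(R)$: since $E_U(t)$ is grouplike (Theorem \ref{hopf_str}), $\Delta(e_i(U))=\sum_{j=0}^{i}e_j(U)\otimes e_{i-j}(U)$ and $S(E_U(t))=E_U(t)^{-1}$ has coefficients polynomial in the $e_j(U)$ with $j\le i$, all lying in $\mathcal{G}_k^{\mathbb{Z}}(R)$ because $j,i-j\le k$, while $\epsilon(e_i(U))=\delta_{i,0}$. By Theorem \ref{hopf_str} each $T_i(U)$ is primitive in $\mathcal{G}_\infty^{\mathbb{Z}}(R)$, hence in $\mathcal{G}_k^{\mathbb{Z}}(R)\otimes_{\mathbb{Z}}\mathbb{Q}=\bigotimes_{i=1}^{k}U(\mathbb{Q}\otimes_{\mathbb{Z}}R_i)$; but primitivity of the generating Lie algebra is exactly the defining property of the tensor-product (equivalently, enveloping-algebra) Hopf structure on the latter. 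Since an algebra map out of a universal enveloping algebra is determined by its restriction to the generating Lie algebra, the two comultiplications agree over $\mathbb{Q}$, and then the counits are forced and the antipode is unique; hence the two Hopf structures coincide, and restrict to the same structure on the integral form.

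Finally, suppose $R$ has finite $\mathbb{Z}$-rank $d$. By the second step $\mathrm{gr}\,\mathcal{G}_k^{\mathbb{Z}}(R)\cong\mathbb{Z}[e_i^{(U)}:1\le i\le k,\ U\in I]$ is a polynomial ring over $\mathbb{Z}$ in $kd$ variables, hence noetherian by the Hilbert basis theorem; since the filtration on $\mathcal{G}_k^{\mathbb{Z}}(R)$ is $\mathbb{Z}_{\ge 0}$-indexed, exhaustive and has $F_0=\mathbb{Z}$, the standard fact that a filtered ring with noetherian associated graded is noetherian gives the claim. The step I expect to need the most care is the spanning argument in the second paragraph: one must avoid trying to rewrite $[e_i(U),e_j(V)]$ through the $e_r(W)$ explicitly (which may force $W\notin I$ and $r=i+j-1>k$, seemingly leaving $\mathcal{G}_k^{\mathbb{Z}}(R)$), and instead argue internally --- these commutators lie in $\mathcal{G}_k^{\mathbb{Z}}(R)$ by definition of the subalgebra, and the reordering induction uses only that they drop filtration degree, which keeps everything inside $\mathcal{G}_k^{\mathbb{Z}}(R)$.
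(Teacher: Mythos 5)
Your proposal is correct and follows essentially the same route as the paper: the upper-triangular relation $e_n(U)=T_n(U)+(\text{polynomial in }T_s(V),\ s<n)$ identifies $\mathbb{Q}\otimes_{\mathbb{Z}}\mathcal{G}_k^{\mathbb{Z}}(R)$ with $\bigotimes_{i=1}^{k}U(\mathbb{Q}\otimes_{\mathbb{Z}}R_i)$, primitivity of the $T_i(U)$ gives the agreement of Hopf structures, and noetherianity is transferred from the associated graded polynomial algebra via the filtration. The only difference is that you spell out the PBW-basis and spanning arguments (which the paper leaves implicit), and your care about keeping the commutator rewriting internal to $\mathcal{G}_k^{\mathbb{Z}}(R)$ is a sensible precaution rather than a deviation.
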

\begin{proof}
Note that $\mathbb{Q} \otimes_{\mathbb{Z}} \mathcal{G}_k^\mathbb{Z}(R)$ is the subalgebra of $\mathcal{G}_\infty^{\mathbb{Q}}(R)$ generated by the $e_i(U)$ for $i \leq k$. Because $e_n(U)$ is equal to $T_n(U)$ plus a polynomial in $T_m(V)$ with $m<n$ (this follows from extracting the coefficient of $t^n$ from the generating function defining $e_n(U)$), this is the same as the subalgebra generated by the $T_i(U)$ for $i \leq k$. By Theorem 8.8 of \cite{Ryba}, this is the $k$-fold tensor product of the universal enveloping algebra of $\mathbb{Q} \otimes_\mathbb{Z} \mathcal{G}(R)$, and as $T_i(U)$ is primitive, the Hopf algebra structures agree. This proves the statement about $\mathcal{G}_k^\mathbb{Z}(R)$ being an integral form. To show that $\mathcal{G}_k^\mathbb{Z}(R)$ is noetherian, it suffices to consider the associated graded algebra. Note that the $e_i(U)$ for $U \in I$ ($I$ a $\mathbb{Z}$-basis of $R$) are free polynomial generators of the associated graded algebra of $\mathcal{G}_\infty^\mathbb{Z}(R)$ (as per \cite{Nate}). Because Corollary \ref{first_degree_relation} expresses $[e_i(U), e_j(V)]$ in terms of $e_k(W)$ with $k < \min(i, j)$, it follows that the associated graded algebra of $\mathcal{G}_k^\mathbb{Z}(R)$ is a free polynomial algebra generated by $e_i(U)$ for $i \leq k$ and $U \in I$ (which is noetherian, as $I$ was assumed to be finite). It follows that $\mathcal{G}_k^\mathbb{Z}(R)$ is noetherian. 
\end{proof}

\section{Dual of Hopf algebra Structure}
\noindent
In this section we discuss the dual of the Hopf algebra defined in Section 8. Our main result is that $\mathcal{G}_\infty^{\mathbb{Z}}(R)$ is isomorphic to the Hopf algebra of distributions on the formal neighbourhood of the identity in $(W \otimes_{\mathbb{Z}} R)^\times$ (where $W$ is the ring of Big Witt Vectors) that are supported at the identity. In this section, we require that the unit of $R$, denoted $\mathbf{1}$, is an element of $I$.

\begin{definition}
Let $\mathcal{G}_\infty^{\mathbb{Z}}(R)^{*}$ be the (full) dual space of $\mathcal{G}_\infty^{\mathbb{Z}}(R)$. We write $Y_{\vv{\lambda}}$ for the elements of $\mathcal{G}_\infty^{\mathbb{Z}}(R)^{*}$ that are dual to $Z_{\vv{\lambda}}$ (i.e. $Y_{\vv{\lambda}}(Z_{\vv{\mu}}) = \delta_{\vv{\lambda}, \vv{\mu}}$, where we have used the Kronecker delta).
\end{definition}

\begin{proposition} \label{bialgebra_structure}
There is a Hopf algebra structure on $\mathcal{G}_\infty^{\mathbb{Z}}(R)^{*}$ coming from the dual of the multiplication, unit, comultplication and counit on $\mathcal{G}_\infty^{\mathbb{Z}}(R)$ in Theorem \ref{hopf_str}. Additionally, $\mathcal{G}_\infty^{\mathbb{Z}}(R)^*$ is isomorphic to the ring $Q$ of formal power series in $e_r(U)$ ($r \geq 1$, $U \in I$):
\[
Q = \mathbb{Z}[[e_1(\mathbf{1}), e_2(\mathbf{1}), \ldots, e_1(U), e_2(U), \ldots, \ldots]]
\]
where $Y_{\vv{\lambda}} \mapsto \prod_{U \in I} s_{\vv{\lambda}(U)}(\mathbf{x}_U)$.
\end{proposition}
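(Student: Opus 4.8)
The plan is to establish the two assertions of the proposition separately: first that the dual $\mathcal{G}_\infty^{\mathbb{Z}}(R)^*$ carries a Hopf algebra structure by dualizing the structure maps, and second that this Hopf algebra is isomorphic, as a ring, to the power series ring $Q$ via the stated formula on the basis $Y_{\vv\lambda}$. For the first part, the key point is that $\mathcal{G}_\infty^{\mathbb{Z}}(R)$ is a graded (or at least filtered) connected Hopf algebra whose graded pieces are free of finite rank: the $|\lambda|$-filtration has associated graded the free polynomial algebra on the $e_i(U)$ for $U \in I$ (by \cite{Nate}), so each filtration piece is a finitely generated free $\mathbb{Z}$-module. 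Hence the full dual of each graded piece is again free of finite rank, the multiplication $\mathcal{G}_\infty^{\mathbb{Z}}(R) \otimes \mathcal{G}_\infty^{\mathbb{Z}}(R) \to \mathcal{G}_\infty^{\mathbb{Z}}(R)$ dualizes to a well-defined comultiplication $\mathcal{G}_\infty^{\mathbb{Z}}(R)^* \to (\mathcal{G}_\infty^{\mathbb{Z}}(R) \otimes \mathcal{G}_\infty^{\mathbb{Z}}(R))^* = \mathcal{G}_\infty^{\mathbb{Z}}(R)^* \hat\otimes \mathcal{G}_\infty^{\mathbb{Z}}(R)^*$, and symmetrically the comultiplication of Theorem \ref{hopf_str} dualizes to a multiplication on the dual. (One must be slightly careful that the dual multiplication lands in the completed tensor product, which is exactly what produces the power series ring rather than a polynomial ring — I would note this but not belabor it.) The Hopf axioms for the dual follow formally by transposing the diagrams for the Hopf axioms on $\mathcal{G}_\infty^{\mathbb{Z}}(R)$.

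For the ring isomorphism, I would define the map $\Phi: Q \to \mathcal{G}_\infty^{\mathbb{Z}}(R)^*$ sending the power series variable $e_r(U)$ to a suitable linear functional, or rather define the inverse $\Psi: \mathcal{G}_\infty^{\mathbb{Z}}(R)^* \to Q$ on the topological basis by $Y_{\vv\lambda} \mapsto \prod_{U\in I} s_{\vv\lambda(U)}(\mathbf{x}_U)$, using that the Schur products $\prod_U s_{\vv\lambda(U)}(\mathbf{x}_U)$ form a $\mathbb{Z}$-basis of $\bigotimes_{U\in I}\Lambda^{(U)} \cong \mathbb{Z}[e_r(U)]$, and hence a topological $\mathbb{Z}$-basis of its completion $Q$. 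This is manifestly a $\mathbb{Z}$-module isomorphism (both sides being the completed free module on the index set $\mathcal{P}^I$). The substance is checking it is a ring map: the product on $\mathcal{G}_\infty^{\mathbb{Z}}(R)^*$ is dual to the comultiplication $\Delta$ of Theorem \ref{hopf_str}, and by Corollary \ref{coefficient_corollary} the coefficient of $Z_{\vv\mu}\otimes Z_{\vv\nu}$ in $\Delta(Z_{\vv\lambda})$ equals the coefficient of $\prod_U s_{\vv\lambda(U)}(\mathbf{x}_U)$ in $\prod_U s_{\vv\mu(U)}(\mathbf{x}_U)\cdot\prod_U s_{\vv\nu(U)}(\mathbf{x}_U)$. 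Translated through $\Psi$, this says exactly that $\Psi(Y_{\vv\mu} Y_{\vv\nu}) = \Psi(Y_{\vv\mu})\Psi(Y_{\vv\nu})$, i.e. that the dual product corresponds to multiplication of symmetric functions (equivalently, to multiplication of power series in the $e_r(U)$, since the $e_r$ are algebraically independent generators of $\Lambda$). The counit of $\mathcal{G}_\infty^{\mathbb{Z}}(R)$ being dual to the unit of $Q$ and vice versa pins down the units, so $\Phi = \Psi^{-1}$ is an isomorphism of rings.

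The main obstacle I anticipate is purely bookkeeping around completions and finiteness: one must verify that the $|\lambda|$-filtration (not a grading, since Corollary \ref{first_degree_relation} shows commutators drop filtration degree but the structure constants of the $Z_{\vv\lambda}$-basis are not homogeneous) has finite-rank pieces so that "dualize the multiplication" is legitimate, and that the resulting object is the \emph{completed} tensor square so the dual of the coproduct really does yield $Q$ and not a subalgebra. Concretely, I would observe that $\{\vv\lambda : |\vv\lambda| \le n\}$ is finite, that $Z_{\vv\lambda}$ for $|\vv\lambda|\le n$ span the $n$-th filtration piece, and that this piece is a direct summand as a $\mathbb{Z}$-module (being spanned by part of a $\mathbb{Z}$-basis). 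Everything else is a transcription of Corollary \ref{coefficient_corollary} and the standard fact that multiplication of Schur functions (in each block of variables separately) corresponds under $\Lambda \cong \mathbb{Z}[e_1,e_2,\dots]$ to ordinary polynomial multiplication. I would also remark that the identification $Y_{\vv\lambda}\mapsto \prod_U s_{\vv\lambda(U)}(\mathbf{x}_U)$ is compatible with Proposition \ref{Z_graded_image}, which identifies $Z_{\vv\lambda}$ with the same Schur product in the associated graded, making the pairing the evident one.
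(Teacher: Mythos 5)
Your proposal is correct and follows essentially the same route as the paper: dualise the comultiplication of Theorem \ref{hopf_str} to get the product on $\mathcal{G}_\infty^{\mathbb{Z}}(R)^*$, identify $Y_{\vv{\lambda}}$ with $\prod_U s_{\vv{\lambda}(U)}(\mathbf{x}_U)$, and read off the ring structure and the needed finiteness of structure constants from Corollary \ref{coefficient_corollary} (and dually Corollary \ref{multiplication_duality} for the coproduct on $Q$). One small caveat: your claim that the filtration pieces are finitely generated holds only when $I$ is finite, but this is inessential, since the well-definedness of the product on the full dual rests on the finiteness of the set of pairs $(\vv{\mu},\vv{\nu})$ contributing to a fixed $Y_{\vv{\lambda}}$, which you correctly extract from Corollary \ref{coefficient_corollary}.
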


\begin{proof}
Consider the equations in Corollary \ref{coefficient_corollary}. Dualising the comultiplication on $\mathcal{G}_\infty^{\mathbb{Z}}(R)$ gives a multiplication on $\mathcal{G}_\infty^{\mathbb{Z}}(R)^*$, where the coefficient of $Y_{\vv{\lambda}}$ in $Y_{\vv{\mu}}Y_{\vv{\nu}}$ coincides with the coefficient of
\[
\prod_{U \in I} s_{\vv{\lambda}(U)}(\mathbf{x}_U)
\]
in
\[
\prod_{U \in I} s_{\vv{\mu}(U)}(\mathbf{x}_U)\cdot \prod_{U \in I} s_{\vv{\nu}(U)}(\mathbf{x}_U)
\]
Note that this gives a well defined multiplication because for any fixed $\vv{\lambda}$, there are only finitely many pairs $(\vv{\mu}, \vv{\nu})$ for which the coefficient above is nonzero. It immediately follows that $\mathcal{G}_\infty^{\mathbb{Z}}(R)^*$ is isomorphic to $Q$ as an algebra, with the claimed isomorphism. From now on, we prefer to work with the symmetric function realisation. To determine the comultiplication, we write $Q \otimes Q$ with two sets of symmetric function variables: $\{\mathbf{x}_U\}_{U \in I}$ and $\{\mathbf{y}_U\}_{U \in I}$. Then Corollary \ref{multiplication_duality} gives that the comultiplication in $Q$ applied to $\prod_{U \in I} s_{\vv{\lambda}(U)}^{(U)}$ is equal to
\[
\prod_{U \in I} s_{\vv{\lambda}(U)}(\mathbf{x}_U, \mathbf{y}_U, \bigoplus_{V, W \in I}(\mathbf{x}_V\mathbf{y}_W)^{\oplus N_{V,W}^U})
\]
Note that this is well defined because any $\prod_{U \in I} s_{\vv{\mu}(U)}^{(U)}\prod_{U \in I} s_{\vv{\nu}(U)}^{(U)}$ can occur in the image of only finitely many $\prod_{U \in I} s_{\vv{\lambda}(U)}^{(U)}$.
\end{proof}
\noindent
In view of the convenient description using symmetric functions, we prefer to work with $\mathcal{G}_\infty^{\mathbb{Z}}(R)^*$ in terms of symmetric functions (specifically, we view $Q$ as a completion of a tensor product of copies of the ring of symmetric functions) rather than the dual of $\mathcal{G}_\infty^{\mathbb{Z}}(R)$.
\begin{proposition}
The antipode $S$ on $Q$ is implicitly defined by the following equation:
\[
\sum_{U \in I} S(p_l(\mathbf{x}_U)) U = \sum_{r \geq 1}(-1)^r (\sum_{U\in I} p_l(\mathbf{x}_U)U)^r
\]
\end{proposition}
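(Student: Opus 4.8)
The plan is to identify the antipode of $Q$ via duality with the antipode $S$ on $\mathcal{G}_\infty^{\mathbb{Z}}(R)$ from Theorem \ref{hopf_str}, which is characterised by $S(E_U(t)) = E_U(t)^{-1}$. First I would recall that under the isomorphism of Proposition \ref{bialgebra_structure}, the element $p_l(\mathbf{x}_U) \in Q$ is (up to the usual Newton-identity bookkeeping relating $p_l^{(U)}$ to the $e_r^{(U)}$) a functional on $\mathcal{G}_\infty^{\mathbb{Z}}(R)$, and that the pairing is governed by the generating-function identity in Definition \ref{general_Z_def}: the left factor $\prod_{U} s_{\vv{\lambda}(U)}^{(U)}$ records the "$Q$-side" while $Z_{\vv{\lambda}}$ records the "$\mathcal{G}$-side". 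The key observation is that the antipode on a Hopf algebra is dual to the antipode on the dual Hopf algebra, so the formula for $S$ on $Q$ is forced by transposing $S(E_U(t)) = E_U(t)^{-1}$.

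The main computational step is to transport this through the bilinear pairing encoded by the $\Theta_l$-product. Concretely, I would start from the defining identity
\[
\sum_{\vv{\lambda} \in \mathcal{P}^I} \left( \prod_{U \in I} s_{\vv{\lambda}(U)}^{(U)} \right) \otimes Z_{\vv{\lambda}} = \prod_{l=1}^{\infty}\Theta_l \left(1 + \sum_{U \in I} p_l^{(U)}U\right),
\]
apply $S \otimes \mathrm{id}$ to the $Q$-factor and $\mathrm{id} \otimes S$ to the $\mathcal{G}_\infty^{\mathbb{Z}}(R)$-factor, and use that these two operations must agree on the pairing (since $S$ on $Q$ is the transpose of $S$ on $\mathcal{G}_\infty^{\mathbb{Z}}(R)$). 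On the $\mathcal{G}$-side, multiplicativity of $\Theta_l$ (the proposition preceding Lemma \ref{commuting_lemma}) together with Lemma \ref{inverse} turns $S$ applied to the product $\prod_l \Theta_l(1 + \sum_U p_l^{(U)}U)$ into $\prod_l \Theta_l\big((1 + \sum_U p_l^{(U)}U)^{-1}\big)$, where the inverse $(1 + \sum_U p_l^{(U)}U)^{-1} = \sum_{r \geq 0}(-1)^r(\sum_U p_l^{(U)}U)^r$ is a geometric series in $\left(\bigotimes_U \Lambda^{(U)}\right)\hat{\otimes}R$. Matching this against the effect of $S$ on the $Q$-side, which acts by $p_l(\mathbf{x}_U) \mapsto S(p_l(\mathbf{x}_U))$, and comparing the coefficient of each basis element $U \in I$ in $R$ (legitimate since $\mathbf{1} \in I$ by the standing hypothesis of this section, and the $U$ form a $\mathbb{Z}$-basis), yields exactly
\[
\sum_{U \in I} S(p_l(\mathbf{x}_U)) U = \sum_{r \geq 1}(-1)^r \Big(\sum_{U\in I} p_l(\mathbf{x}_U)U\Big)^r,
\]
where the $r=0$ term cancels the $1$ from $1 + \sum_U p_l^{(U)} U$.

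The main obstacle I anticipate is making precise the claim that $S$ on $Q$ is the transpose of $S$ on $\mathcal{G}_\infty^{\mathbb{Z}}(R)$ in the completed, infinite-rank setting: $Q$ is a completion of $\bigotimes_U \Lambda^{(U)}$ and the pairing is non-degenerate but one must check the relevant sums converge and that the antipode is continuous for the relevant (filtration/degree) topology. This is essentially routine — each graded piece is finite-dimensional over $\mathbb{Z}$ once one fixes a total $|\vv{\lambda}|$ — but it needs to be stated carefully. A secondary subtlety is that $p_l(\mathbf{x}_U)$ is not literally an element of the $\mathbb{Z}$-form $Q$ but only of $\mathbb{Q}\otimes Q$; however the relation above is an identity of generating functions whose coefficients, after clearing denominators via Newton's identities, land in $Q$, so this is not a genuine difficulty, only a matter of phrasing.
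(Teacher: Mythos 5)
Your proposal takes essentially the same route as the paper: both apply the antipode of $\mathcal{G}_\infty^{\mathbb{Z}}(R)$ to the $\mathcal{G}$-factor of the generating function in Definition \ref{general_Z_def}, observe that this inverts the argument of each $\Theta_l$, expand the geometric series for $(1+\sum_U p_l^{(U)}U)^{-1}$, and then dualize to read off $S$ on power sums. The one point the paper makes explicit that you leave implicit is \emph{why} $S$ inverts the argument of $\Theta_l$: the paper uses that $T_l(U)$ is primitive, hence $S(T_l(U)) = -T_l(U)$, so $S(\exp(T_l(\log x))) = \exp(T_l(\log(x^{-1})))$; your citation of Lemma \ref{inverse} and $S(E_U(t)) = E_U(t)^{-1}$ supplies the inverse formula and the motivating special case, but strictly speaking you still need the grouplike property of $\Theta_l(1 + \sum_U p_l^{(U)}U)$ itself (equivalently, primitivity of $T_l$) to conclude that $S$ carries it to its inverse --- a small step you should make explicit, though it does not change the substance of the argument.
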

\begin{proof}
Because $T_l(U)$ is primitive, if $S$ is the antipode on $\mathcal{G}_{\infty}^\mathbb{Z}(R)$, $S(T_l(U)) = -T_l(U)$. This implies:
\begin{eqnarray*}
& &
\sum_{\vv{\lambda} \in \mathcal{P}^{I}} \left(\prod_{U \in I} s_{\vv{\lambda}(U)}(\mathbf{x}) \right) \otimes S(Z_{\vv{\lambda}}) \\
&=& \prod_{l=1}^{\infty}\exp \left( S \left(T_l \left(\log \left(1 + \sum_{U \in I} p_l(\mathbf{x}_U U\right)\right) \right) \right) \\
&=& \prod_{l=1}^{\infty}\exp \left( - T_l \left(\log \left(1 + \sum_{U \in I} p_l(\mathbf{x}_U)U\right) \right) \right) \\
&=& \prod_{l=1}^{\infty}\exp \left(T_l \left(\log \left((1 + \sum_{U \in I} p_l(\mathbf{x}_U) U)^{-1}\right)\right) \right) \\
&=& \prod_{l=1}^{\infty}\exp \left(T_l \left(\log \left(1 + 
\sum_{r \geq 1}(-1)^r (\sum_{U \in I} p_l(\mathbf{x}_U)U)^r
\right)\right) \right). \\
\end{eqnarray*}
From this equation, it follows that the dualised antipode (i.e. the antipode on the ring $Q$) on power sum symmetric functions is defined by the following equation (similarly to the proof of Proposition \ref{bialgebra_structure}):
\[
\sum_{U \in I} S(p_l(\mathbf{x}_U)) U = \sum_{r \geq 1}(-1)^r (\sum_{U \in I} p_l(\mathbf{x}_U)U)^r.
\]
\end{proof}
\begin{example}
This sum defining the antipode is not finite. For example, when $R = \mathbb{Z}$, we take $I = \{\mathbf{1}\}$; this gives
\[
S(p_l^{(\mathbf{1})}) = \sum_{r \geq 1} (-1)^r (p_l^{(\mathbf{1})})^{r}.
\]
\end{example}
\begin{proposition} \label{formal_gp_law}
The comultiplication $\Delta: Q \to Q \otimes Q$ defines a formal group law $F$ in the variables $e_i^{(U)}$ ($U \in I$) (essentially by dualisation). The definition is
\[
F(\{e_i(\mathbf{x}_U)\}_{i \geq 1, U \in I}, \{e_i(\mathbf{y}_U)\}_{i \geq 1, U \in I}) = \{e_i(\mathbf{x}_U, \mathbf{y}_U, \bigoplus_{V, W \in I(\mathcal{C})}(\mathbf{x}_V \mathbf{y}_W)^{\oplus N_{V, W}^U})\}_{i \geq 1, U \in I}.
\]
\end{proposition}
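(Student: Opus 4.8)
The plan is to observe that Proposition \ref{formal_gp_law} is a repackaging of the Hopf algebra structure on $Q$ from Proposition \ref{bialgebra_structure}: a commutative Hopf algebra that, as a complete topological algebra, is a power series ring on a chosen set of coordinates is exactly the datum of a formal group law in those coordinates, with associativity of the law coming from coassociativity of the comultiplication and the identity axiom from counitality.

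First I would recall from Proposition \ref{bialgebra_structure} that $Q$ is, as a complete topological algebra, the power series ring $\mathbb{Z}[[e_i(U) : i \geq 1,\, U \in I]]$, via $Y_{\vv{\lambda}} \mapsto \prod_{U \in I} s_{\vv{\lambda}(U)}(\mathbf{x}_U)$, so that the coordinate function $e_i(U)$ is identified with the genuine elementary symmetric function $e_i(\mathbf{x}_U) = s_{(1^i)}(\mathbf{x}_U)$, and that $Q$ is a Hopf algebra (dual to $\mathcal{G}_\infty^{\mathbb{Z}}(R)$ of Theorem \ref{hopf_str}). Since the comultiplication $\Delta : Q \to Q\,\widehat{\otimes}\,Q$ is a continuous algebra homomorphism, it is determined by the images $\Delta(e_i(U))$. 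Specialising the comultiplication formula in Proposition \ref{bialgebra_structure} to the multipartition concentrated at $U$ with value $(1^i)$, and using $s_{(1^i)} = e_i$, gives
\[
\Delta\big(e_i(\mathbf{x}_U)\big) = e_i\!\left(\mathbf{x}_U,\ \mathbf{y}_U,\ \bigoplus_{V, W \in I}(\mathbf{x}_V\mathbf{y}_W)^{\oplus N_{V,W}^U}\right),
\]
which is exactly the series $F$ of the statement. To see that the right-hand side really is a tuple of power series in the coordinates $e_j(\mathbf{x}_{U'})$ and $e_j(\mathbf{y}_{U'})$, one uses that $\Lambda = \mathbb{Z}[e_1, e_2, \dots]$, that $e_i$ is multiplicative over disjoint unions of alphabets (so the three groups of variables recombine through a product of generating functions $E(t)$), and that each Kronecker piece $e_i(\mathbf{x}_V\mathbf{y}_W)$ lies in $\mathbb{Z}[e_j(\mathbf{x}_V), e_k(\mathbf{y}_W)]$ because $\Delta^{(\times)}(e_i) = e_i(\mathbf{x}\mathbf{y})$ is part of a bialgebra structure on $\Lambda$; the convergence needed to land in $Q\,\widehat{\otimes}\,Q$ is already guaranteed by Proposition \ref{bialgebra_structure}, and may be double-checked by a bidegree count (the $\mathbf{x}_V\mathbf{y}_W$ variables carry bidegree $(1,1)$, so $e_i$ of the combined alphabet has bidegree between $i$ and $2i$).

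The formal group law axioms are then read off from the Hopf axioms. The origin of the formal group is the point where every coordinate $e_i(U)$ ($i \geq 1$) vanishes, i.e. the counit $\varepsilon$ of $Q$, which in symmetric-function terms is the augmentation setting all alphabets to zero; in particular $\varepsilon(e_i(U)) = 0$ for $i \geq 1$. In the substitution description of $\Delta$, setting all $\mathbf{y}_U$ to zero sends $\mathbf{x}_U \mapsto \mathbf{x}_U$ (and, symmetrically, setting all $\mathbf{x}_U$ to zero sends $\mathbf{x}_U \mapsto \mathbf{y}_U$), so the counit identities $(\mathrm{id}\otimes\varepsilon)\Delta = \mathrm{id} = (\varepsilon\otimes\mathrm{id})\Delta$ become $F(x,0) = x = F(0,x)$. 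Coassociativity of $\Delta$, which holds because $Q$ is a Hopf algebra, becomes $F(F(x,y),z) = F(x,F(y,z))$, and the antipode on $Q$ supplies the formal inverse series. There is no symmetry axiom to verify, and indeed $F$ need not be symmetric, matching the fact that $N_{V,W}^U \neq N_{W,V}^U$ when $R$ is noncommutative.

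The main point requiring care — rather than a genuine obstacle — is confirming that each coordinate of $F$ is a well-defined formal power series in the chosen variables when $I$ is infinite, i.e. that the substitution $\mathbf{x}_U \mapsto (\mathbf{x}_U, \mathbf{y}_U, \bigoplus_{V,W}(\mathbf{x}_V\mathbf{y}_W)^{\oplus N_{V,W}^U})$ lands in $Q\,\widehat{\otimes}\,Q$. This is precisely the finiteness already established in the proof of Proposition \ref{bialgebra_structure}, so no new argument is needed; the rest is a formal translation of coassociativity and counitality into the language of formal group laws.
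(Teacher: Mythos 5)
Your proposal is correct and follows essentially the same route as the paper: both arguments dualise the Hopf structure on $Q$, derive associativity of $F$ from coassociativity of $\Delta$, and identify the coordinates of $F$ with the substitution formula from Corollary \ref{coefficient_corollary}/Proposition \ref{bialgebra_structure}. The only cosmetic difference is that you verify the unit axiom via the counit identities $F(x,0)=x=F(0,x)$, whereas the paper checks directly that $F$ is addition modulo higher-order terms by expanding $e_i$ over the three blocks of the combined alphabet — these are equivalent, and your observation that setting $\mathbf{y}=0$ collapses the substituted alphabet back to $\mathbf{x}_U$ is the same computation in disguise.
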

\begin{proof}
Coassociativity of the comultiplication implies associativity of the formal group law. The fact that $F$ is addition to first order can be seen by using the fact that $e_i(\mathbf{x}, \mathbf{y}) = \sum_{j=0}^{i} e_j(\mathbf{x})e_{i-j}(\mathbf{y})$:
\[
e_i(\mathbf{x}_U, \mathbf{y}_U, \bigoplus_{V, W \in I}(\mathbf{x}_V\mathbf{y}_W)^{\oplus N_{V, W}^U})
= \sum_{a+b+c=i} e_a(\mathbf{x}_U)e_b(\mathbf{y}_U)e_c(\bigoplus_{V, W \in I}(\mathbf{x}_V\mathbf{y}_W)^{\oplus N_{V, W}^U}).
\]
The only summands that give rise to (a scalar multiple of) an elementary symmetric function are those for which $c=0$ and either $a=0$ or $b=0$. So we get $e_i(\mathbf{x}_U) + e_i(\mathbf{y}_U)$ plus higher order terms, as required.
\end{proof}

\begin{definition}
Let the affine group scheme represented by the commutative ring $\Lambda$ be called the Big Witt Vectors, and denoted $W$. Thus for a commutative ring $A$, the Big Witt Vectors associated to $A$, denoted $W(A)$, are defined as follows. As a set, $W(A) = \hom_{alg}(\Lambda, A)$ is the set of algebra homomorphisms from the ring of symmetric functions to $A$. The addition is induced by the usual comultiplication on $\Lambda$,  $\Delta^{(+)}(e_n) = \sum_{i=0}^{n} e_i \otimes e_{n-i}$. The multiplication is induced by the Kronecker comultiplication $\Delta^{(\times)}(e_n) = \sum_{\lambda \vdash n} s_\lambda \otimes s_{\lambda^\prime}$. 
\end{definition}
\noindent
Note in particular that the underlying additive group of the Big Witt Vectors is the affine group scheme over $\mathbb{Z}$ represented by the ring of symmetric functions $\Lambda$ (with the usual Hopf algebra structure). The addition and multiplication on $W$ satisfy distributivity as discussed in Section 10 of \cite{hazelwinkel}. 
\newline \newline\noindent
Since $\Lambda$ is the free polynomial algebra in the elementary symmetric functions $e_i$ ($i \in \mathbb{Z}_{> 0}$), an element of $W(A)$ is the same thing as a choice of an element of $A$ for each $e_i$. This may be represented as an infinite sequence $(a_1, a_2, \ldots)$, where $a_i \in A$ is the image of $e_i$. Then, the additive identity in $W(A)$ is $(0,0,\ldots)$, and the multiplicative identity is $(1,0,0,\ldots)$ (this is equivalent to Equation 10.24 of \cite{hazelwinkel}, although complete symmetric functions are used rather than elementary symmetric functions).

\begin{theorem} \label{dual_hopf_recognition}
The algebra $Q = \mathcal{G}_\infty^{\mathbb{Z}}(R)^*$ is isomophic to the Hopf algebra of functions on the formal neighbourhood of the multiplicative identity of $(W \otimes_\mathbb{Z} R)^{\times}$.
\end{theorem}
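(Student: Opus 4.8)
The plan is to realise the Hopf algebra of functions on the formal neighbourhood of the identity of $(W\otimes_{\mathbb{Z}}R)^{\times}$ as the completion of the coordinate ring of the ring scheme $W\otimes_{\mathbb{Z}}R$ at its multiplicative identity, and to match that completion with the ring $Q$ of Proposition \ref{bialgebra_structure} by an explicit additive translation. Since $R\cong\mathbb{Z}^{\oplus I}$ as a $\mathbb{Z}$-module (with ring structure recorded by the tensor $N_{V,W}^{U}$), the scheme underlying $W\otimes_{\mathbb{Z}}R$ is $\mathrm{Spec}$ of $\bigotimes_{U\in I}\Lambda^{(U)}$, with the $e_i^{(U)}$ as coordinate functions; its additive group law is a product of copies of $W$ indexed by $I$, and its ring multiplication sends a pair $((\mathbf{a}^{(V)})_V,(\mathbf{b}^{(W)})_W)$ to the point whose $U$-th Witt component is $\sum_{V,W}N_{V,W}^{U}\,(\mathbf{a}^{(V)}\cdot_W\mathbf{b}^{(W)})$, where $+_W$ and $\cdot_W$ are the Witt operations.

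First I would spell out the dictionary between symmetric functions and Witt vectors: under the identification of a variable set $\mathbf{x}_U$ with the $W$-valued point $e_n\mapsto e_n(\mathbf{x}_U)$, the Hopf comultiplication $\Delta^{(+)}(e_n)=\sum_i e_i\otimes e_{n-i}$ of $\Lambda$ computes Witt addition (uniting variable sets, $\mathbf{x}\uplus\mathbf{y}$), while the Kronecker comultiplication $\Delta^{(\times)}(e_n)=\sum_{\lambda\vdash n}s_\lambda\otimes s_{\lambda'}$ computes Witt multiplication (forming the products $\mathbf{x}\mathbf{y}$). Consequently the multiplication morphism of $W\otimes_{\mathbb{Z}}R$ is dual to the coalgebra map $e_i^{(U)}\mapsto e_i\!\left(\bigoplus_{V,W}(\mathbf{x}_V\mathbf{y}_W)^{\oplus N_{V,W}^{U}}\right)$. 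Because $\mathbf{1}\in I$ and the multiplicative unit of $W$ is the Witt vector with $E(t)=1+t$ (equivalently, all power-sum ghost components equal to $1$), the identity of $(W\otimes_{\mathbb{Z}}R)^{\times}$ is the $\mathbb{Z}$-point $\mathbf{1}$ with $e_1^{(\mathbf{1})}=1$ and all other coordinates zero, with maximal ideal $\mathfrak{m}_{\mathbf{1}}=(e_1^{(\mathbf{1})}-1,\,\{e_i^{(\mathbf{1})}\}_{i\ge 2},\,\{e_i^{(U)}\}_{U\ne\mathbf{1},\,i\ge 1})$. The formal neighbourhood of $\mathbf{1}$ in $(W\otimes_{\mathbb{Z}}R)^{\times}$ agrees with that in $W\otimes_{\mathbb{Z}}R$, since any point congruent to $\mathbf{1}$ modulo a topologically nilpotent ideal is a unit; so the target Hopf algebra is the completion $\widehat{\mathcal{O}}$ of $\bigotimes_U\Lambda^{(U)}$ at $\mathfrak{m}_{\mathbf{1}}$, with the comultiplication just described.

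The key step is the additive translation $\sigma\colon W\otimes_{\mathbb{Z}}R\to W\otimes_{\mathbb{Z}}R$, $\mathbf{w}\mapsto\mathbf{w}+_W\mathbf{1}$, which alters only the $\mathbf{1}$-slot, by Witt-adding the unit $1_W$. This is a scheme automorphism carrying the origin to $\mathbf{1}$, and from $(1+\mathbf{a})(1+\mathbf{b})=1+(\mathbf{a}+\mathbf{b}+\mathbf{a}\mathbf{b})$ it is a group isomorphism from $W\otimes_{\mathbb{Z}}R$ with the group law $\mathbf{a}*\mathbf{b}=\mathbf{a}+\mathbf{b}+\mathbf{a}\mathbf{b}$ onto $(W\otimes_{\mathbb{Z}}R)^{\times}$. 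On coordinates, $\sigma^{*}(e_i^{(U)})=e_i^{(U)}$ for $U\ne\mathbf{1}$ and $\sigma^{*}(e_i^{(\mathbf{1})})=e_i^{(\mathbf{1})}+e_{i-1}^{(\mathbf{1})}$ (from $E(t)\mapsto(1+t)E(t)$, with $e_0^{(\mathbf{1})}=1$), so $\sigma^{*}$ sends $\mathfrak{m}_{\mathbf{1}}$ isomorphically onto the augmentation ideal $\mathfrak{m}_0=(e_i^{(U)})_{i\ge 1,\,U\in I}$ and hence extends to an isomorphism of complete topological rings $\widehat{\mathcal{O}}_{\mathfrak{m}_{\mathbf{1}}}\cong\widehat{\mathcal{O}}_{\mathfrak{m}_0}$. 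The latter is the completion of the polynomial ring $\bigotimes_U\Lambda^{(U)}=\mathbb{Z}[e_i^{(U)}]$ at its augmentation ideal, that is, exactly $Q=\mathbb{Z}[[e_1(\mathbf{1}),e_2(\mathbf{1}),\dots]]$. Since $\sigma$ is a group-scheme homomorphism, $\sigma^{*}$ intertwines the multiplicative comultiplication on $\widehat{\mathcal{O}}_{\mathfrak{m}_{\mathbf{1}}}$ with the one dual to $*$, which sends $e_i^{(U)}$ to $e_i(\mathbf{x}_U,\mathbf{y}_U,\bigoplus_{V,W}(\mathbf{x}_V\mathbf{y}_W)^{\oplus N_{V,W}^{U}})$; this is precisely the comultiplication on $Q$ coming from the formal group law $F$ of Proposition \ref{formal_gp_law}. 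As $\sigma^{*}$ is also an algebra isomorphism, it is an isomorphism of bialgebras, hence of Hopf algebras (the antipode being determined); dualising recovers the statement that $\mathcal{G}_\infty^{\mathbb{Z}}(R)$ is the algebra of distributions supported at the identity.

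The main obstacle is the bookkeeping in the Witt-vector/symmetric-function dictionary: one must check carefully that uniting variable sets and multiplying variables implement Witt addition and Witt multiplication respectively, and then push the structure constants $N_{V,W}^{U}$ through the ring-scheme multiplication of $W\otimes_{\mathbb{Z}}R$ — it is easy to lose the extra $\mathbf{x}_U,\mathbf{y}_U$ terms that distinguish the multiplicative group law from the twisted additive law $*$. A milder secondary point is justifying that the formal germ of $(W\otimes_{\mathbb{Z}}R)^{\times}$ at $\mathbf{1}$ coincides with that of $W\otimes_{\mathbb{Z}}R$; and one must remember throughout that the identity of $(W\otimes_{\mathbb{Z}}R)^{\times}$ is not the origin of the coordinate system $\{e_i^{(U)}\}$, which is exactly what the translation $\sigma$ is there to remedy.
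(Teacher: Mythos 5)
Your argument is correct and follows essentially the same route as the paper: both identify the coordinate ring of $W\otimes_{\mathbb Z}R$ with $\bigotimes_{U\in I}\Lambda^{(U)}$, both locate the unit $(1,0,0,\dots)$ in the $\mathbf 1$-slot, both introduce an automorphism moving that point to the origin so that the completion is visibly $Q$, both match the transported comultiplication with the formal group law $e_i^{(U)}\mapsto e_i(\mathbf{x}_U,\mathbf{y}_U,\bigoplus_{V,W}(\mathbf{x}_V\mathbf{y}_W)^{\oplus N_{V,W}^U})$, and both finish by invoking uniqueness of the antipode in a bialgebra.

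The only difference is presentational but genuinely clarifying: you interpret the translation as the group-scheme automorphism $\sigma\colon\mathbf w\mapsto\mathbf w+_W\mathbf 1$, which on coordinates is $\sigma^*(e_i^{(\mathbf 1)})=e_i^{(\mathbf 1)}+e_{i-1}^{(\mathbf 1)}$, and you observe that $\sigma$ transports the twisted additive law $\mathbf a*\mathbf b=\mathbf a+\mathbf b+\mathbf{ab}$ to the multiplicative law. The paper instead writes down the inverse automorphism $\theta(e_i^{(\mathbf 1)})=e_i^{(\mathbf 1)}-e_{i-1}^{(\mathbf 1)}+e_{i-2}^{(\mathbf 1)}-\cdots$ and verifies by hand that it takes the unit's maximal ideal to the augmentation ideal, then computes the conjugated comultiplication directly. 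Your framing via $\sigma$ makes the group-theoretic content of the twist transparent (and makes the check that the comultiplications agree a one-line consequence of $\sigma$ being a group homomorphism), at the cost of having to first establish that the formal germ of $(W\otimes R)^\times$ at $\mathbf 1$ agrees with that of $W\otimes R$ — a point you do flag and which is indeed standard. Both proofs are complete; neither contains a gap.
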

\begin{proof}
Note that the underlying additive group of $W \otimes_\mathbb{Z} R$ is represented (as an affine group scheme) by the ring $Q = \bigotimes_{U \in I} \Lambda^{(U)}$ (with comultiplication coming from the usual comultiplication on $\Lambda$). The multiplication comes from the following comultiplication (obtained by combining $\Delta^{\times}$ with the dual of the multiplication on $R$):
\[
f^{(U)} \mapsto f(\bigoplus_{V, W \in I} (\mathbf{x}_V\mathbf{y}_W)^{\oplus N_{V,W}^U}).
\]
The multiplicative identity in $W$ is given by the sequence $(1,0,0,\ldots)$. This maps $e_1$ to $1$ and all other elementary symmetric functions to zero, giving $(1,0,0,\ldots) \in W$; this is the same as evaluating a symmetric function on the variable set $\{ 1,0,0,\ldots \}$. It then follows that the maximal ideal of $\Lambda$ corresponding to this point is defined by the homomorphism $\varphi: \Lambda \to \mathbb{Z}$ given by $\varphi(s_{(n)}) = 1$ and $\varphi(s_\lambda) = 0$ for $\lambda$ with at least two parts. It also follows that the maximal ideal of $Q$ corresponding to the identity in $W \otimes_{\mathbb{Z}} R$ is the ideal which is the kernel of the homomorphism $\psi: Q \to \mathbb{Z}$ given by $\psi(s_{\vv{\lambda}}) = 1$ when $\vv{\lambda}(\mathbf{1}) = (n)$ for some $n$, and $\vv{\lambda}(U)$ is the empty partition for $U \neq \mathbf{1}$, and $\psi(s_{\vv{\lambda}})=0$ otherwise. Let this maximal ideal be called $J$. Then, we must complete $Q$ with respect to the ideal $J$. To do this, consider the automorphism $\theta$ of $\Lambda^{(\mathbf{1})}$ defined by $\theta(e_i^{(\mathbf{1})}) = e_i^{(\mathbf{1})} - e_{i-1}^{(\mathbf{1})} + e_{i-2}^{(\mathbf{1})} -\cdots$. Note that $\theta(J)$ is the ideal of positive degree elements of $Q$ under the usual grading (it suffices to notice that evaluating $\theta(e_i^{(\mathbf{1})})$ at the variable set $\{1,0,0,\ldots \}$ gives zero for all $i$). Completing with respect to this ideal, we obtain:
\[
\widehat{\bigotimes}_{U \in I} \Lambda^{(U)} = \mathbb{Z}[[e_1^{(\mathbf{1})}, e_2^{(\mathbf{1})}, \ldots, e_1^{(U)}, e_2^{(U)}, \ldots, \ldots]] = Q.
\]
This algebra is isomorphic to $\mathcal{G}_\infty^\mathbb{Z}(R)^*$ by letting $Y_{\vv{\lambda}} \mapsto \prod_{U \in I} s_{\vv{\lambda}(U)}^{(U)}$. To check that this is an isomorphism of Hopf algebras, it remains to check that the comultiplication and antipode agree (here we must take into account the twist by $\theta$). The comultiplication is determined by taking the comultiplication on $Q$ and twisting it by $\theta$. The comultiplication on $Q$ is defined by the following formula (where we interpret $Q \otimes Q$ as being complete symmetric functions in the variables $\mathbf{y}_U$ and $\mathbf{z}_U$ for $U \in I$):
\[
f(\mathbf{x}_U) \mapsto f(\bigoplus_{V, W \in I(\mathcal{C})} (\mathbf{y}_V\mathbf{z}_W)^{\oplus N_{V, W}^U}).
\]
Thus, $\theta$ has the effect of changing the variable sets where either $V$ or $W$ is $\mathbf{1}$. For examples $\mathbf{y}_\mathbf{1}$ becomes $\{1\} \cup \mathbf{y}_{\mathbf{1}}$ (we have appended $1$ to the variable set), and so $\mathbf{y}_{\mathbf{1}}\mathbf{z}_W$ becomes $\mathbf{y}_{\mathbf{1}}\mathbf{z}_W \cup \mathbf{z}_W$. Hence, we obtain a formula for the comultiplication twisted by $\theta$:
\[
f(\mathbf{x}_U) \mapsto f(\mathbf{y}_U, \mathbf{z}_U, \bigoplus_{V,W \in I} (\mathbf{y}_V \mathbf{z}_W)^{\oplus N_{V,W}^U}).
\]
This is in agreement with the formula in Corollary \ref{coefficient_corollary}. Because a bialgebra admits at most one antipode, it automatically follows that the antipodes agree. This completes the proof.
\end{proof}
\begin{remark}
The isomorphism constructed in Theorem \ref{dual_hopf_recognition} depends on $I$.
\end{remark}

\begin{theorem}\label{final_thm}
The algebra $\mathcal{G}_\infty^{\mathbb{Z}}(R)$ is isomorphic to the Hopf algebra of distributions on the formal neighbourhood of the identity in $(W \otimes_\mathbb{Z} R)^\times$ that are supported at the identity.
\end{theorem}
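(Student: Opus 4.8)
The plan is to obtain Theorem~\ref{final_thm} by dualising Theorem~\ref{dual_hopf_recognition}. Write $X$ for the formal neighbourhood of the identity in $(W\otimes_{\mathbb Z}R)^{\times}$; by Theorem~\ref{dual_hopf_recognition} its coordinate ring is the complete local Hopf algebra $Q=\mathcal G_\infty^{\mathbb Z}(R)^{*}$, with maximal ideal $\mathfrak m$ the kernel of evaluation at the identity. By definition the Hopf algebra of distributions on $X$ supported at the identity is $\mathrm{Dist}_e(X)=\bigcup_{n\ge 0}\bigl(Q/\mathfrak m^{n+1}\bigr)^{*}\subseteq Q^{*}$, with multiplication transpose to the comultiplication of $Q$, comultiplication transpose to the multiplication of $Q$, and unit and counit transpose to the counit and unit of $Q$. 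So the task is to identify $\mathrm{Dist}_e(X)$, as a subspace of $Q^{*}$, with $\mathcal G_\infty^{\mathbb Z}(R)$, and to check that the transported Hopf structure is the one of Theorem~\ref{hopf_str}.

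The first step is to compute $Q^{*}$. Since $Q=\mathcal G_\infty^{\mathbb Z}(R)^{*}$ (Proposition~\ref{bialgebra_structure}) and $\mathcal G_\infty^{\mathbb Z}(R)$ is free as a $\mathbb Z$-module, $Q\cong\prod_{\vv{\lambda}}\mathbb Z$; a $\mathbb Z$-linear functional with infinitely many nonzero values on the $Z_{\vv{\lambda}}$ would take a divergent integer value on a suitable convergent series in $Q$ (this is Specker's theorem, and its extension to arbitrary index sets in the absence of measurable cardinals), so the natural map $\mathcal G_\infty^{\mathbb Z}(R)\to Q^{*}$ is an \emph{isomorphism} — $Q$ is reflexive and $Q^{*}=\mathcal G_\infty^{\mathbb Z}(R)$. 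In particular $\mathrm{Dist}_e(X)\subseteq\mathcal G_\infty^{\mathbb Z}(R)$. For the reverse inclusion I would use the twist by the automorphism $\theta$ of Theorem~\ref{dual_hopf_recognition}: in $\theta$-twisted coordinates $Q=\widehat{\bigotimes}_{U\in I}\Lambda^{(U)}$ is a completed graded ring with $\mathfrak m$ its augmentation ideal, so $\mathfrak m^{N+1}$ lies in the closure of the span of elements of degree $\ge N+1$; since $\theta$ preserves the degree filtration, $Z_{\vv{\lambda}}$ is still (up to the twist) the functional reading off the coefficient of a vector of degree $|\vv{\lambda}|$, so any finite $\mathbb Z$-combination of $Z_{\vv{\lambda}}$ with $|\vv{\lambda}|\le N$ annihilates $\mathfrak m^{N+1}$ and hence lies in $\bigl(Q/\mathfrak m^{N+1}\bigr)^{*}\subseteq\mathrm{Dist}_e(X)$. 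Thus $\mathrm{Dist}_e(X)=\mathcal G_\infty^{\mathbb Z}(R)$ as $\mathbb Z$-modules. The Hopf structures then agree for formal reasons: the multiplication on $\mathrm{Dist}_e(X)$ is the transpose of the comultiplication on $Q$, which by Corollary~\ref{multiplication_duality} and Proposition~\ref{bialgebra_structure} is itself the transpose of the multiplication on $\mathcal G_\infty^{\mathbb Z}(R)$, and by reflexivity the double transpose is the original multiplication; the same argument with Corollary~\ref{coefficient_corollary} handles the comultiplication, and the unit, counit and antipode follow. This is precisely the Hopf algebra of Theorem~\ref{hopf_str}, in which $E_U(t)$ is grouplike.

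The point requiring the most care is the slenderness input behind $Q^{*}=\mathcal G_\infty^{\mathbb Z}(R)$: this is where the hypothesis that $R$ is \emph{free} over $\mathbb Z$ is genuinely used, guaranteeing that $Q$ is a completed free module so that its $\mathbb Z$-linear dual collapses back to the original. The other delicate ingredient is the bookkeeping with $\theta$ — confirming that it conjugates $\mathfrak m$ to the augmentation ideal of $\widehat{\bigotimes}_{U\in I}\Lambda^{(U)}$ (this is contained in Theorem~\ref{dual_hopf_recognition}) and that it is filtration-preserving, so the degree interpretation of the functionals $Z_{\vv{\lambda}}$ survives; granting this, the remaining checks, such as that the comultiplication formula of Corollary~\ref{coefficient_corollary} transposes to the multiplication of $\mathcal G_\infty^{\mathbb Z}(R)$, are routine.
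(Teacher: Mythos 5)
Your proposal is correct and follows essentially the same route as the paper: identify the distributions supported at the identity with the finite $\mathbb{Z}$-combinations of the coefficient-extraction functionals $Z_{\vv{\lambda}}$ on $Q$, then transport the Hopf structure by double transposition using Corollary \ref{multiplication_duality}, Corollary \ref{coefficient_corollary} and Proposition \ref{formal_gp_law}, with the antipode handled by uniqueness. The only difference is one of rigour: the paper simply asserts that distributions at a point are "differential operators", i.e.\ finite combinations of the $f_{\vv{\lambda}}$, whereas you justify this identification via the slenderness of $\mathbb{Z}$ (Specker/\L o\'s--Eda) together with the filtration argument for the reverse inclusion --- a legitimate extra step addressing a point the paper glosses over, at the cost of a set-theoretic hypothesis when $I$ is uncountable.
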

\begin{proof}
It is well known that distributions supported at a point are given by differential operators. Moreover, the Hopf algebra structure comes from the group structure on the formal neighbourhood, and is dual to that of functions on the formal neighbourhood. Thus it suffices to show that differential operators on $Q = \mathcal{G}_\infty^\mathbb{Z}(R)^*$ give rise to $\mathcal{G}_\infty^{\mathbb{Z}}(R)$. We think of $\mathcal{G}_\infty^\mathbb{Z}(R)^*$ as $\mathbb{Z}[[e_1^{(\mathbf{1})}, e_2^{(\mathbf{1})}, \ldots, e_1^{(U)}, e_2^{(U)}, \ldots, \ldots]]$, meaning that differential operators are given by linear combinations of the functionals $f_{\vv{\lambda}}$ which extract the coefficient of $\prod_{U \in I} s_{\vv{\lambda}(U)}^{(U)}$ in an element of $\mathbb{Z}[[e_1^{(\mathbf{1})}, e_2^{(\mathbf{1})}, \ldots, e_1^{(U)}, e_2^{(U)}, \ldots, \ldots]]$. The comultiplication on the $f_{\vv{\lambda}}$ has structure constants equal to those defined by multiplication of the $s_{\vv{\lambda}}$:
\[
\Delta(f_{\vv{\lambda}}) = \sum_{\vv{\mu}, \vv{\nu} \in \mathcal{P}^I} (\prod_{U \in I} c_{\vv{\mu}(U), \vv{\nu}(U)}^{\vv{\lambda}(U)}) f_{\vv{\mu}} \otimes f_{\vv{\nu}}
\]
In particular, when $\vv{\lambda}$ is a basic hook of size $i$ associated to $U \in I$, we obtain the same comultiplication as for $e_i(U)$ in $\mathcal{G}_\infty^{\mathbb{Z}}(R)$. The multiplication is obtained by dualising the comultiplication on $\mathcal{G}_\infty^{\mathbb{Z}}(R)^*$. This is the same as the formal group law in Proposition \ref{formal_gp_law}. It then follows that the multiplications agree. Finally, because bialgebras admit at most one antipode, the equality of antipodes is automatic.
\end{proof}
\begin{remark}
Although we only considered rings $R$ that are free as $\mathbb{Z}$-modules, in view of Theorem \ref{final_thm}, we may extend the definition to arbitrary $R$ in the following way. Define $\mathcal{G}_\infty^{\mathbb{Z}}(R)$ as the Hopf algebra of distributions on the formal neighbourhood of the identity in $(W \otimes_\mathbb{Z} R)^\times$ that are supported at the identity. This definition is functorial in $R$.
\end{remark}
\bibliographystyle{alpha}
\bibliography{wreath_alg.bib}

\end{document}